\newtheorem{Theorem}{Theorem}[section]
\newtheorem{Lemma}[Theorem]{Lemma}
\newtheorem{Proposition}{Proposition}[section]
\newtheorem{Definition}{Definition}[section]
\numberwithin{equation}{section} \allowdisplaybreaks
\allowdisplaybreaks \setlength{\textwidth}{15cm}
\begin{document}
\author{Yi Peng}
\address{College of Mathematics and Statistics, Chongqing University,
                             Chongqing, 401331,  China.}
\email[Y. Peng]{20170602018t@cqu.edu.cn}
\author{Huaqiao Wang}
\address{College of Mathematics and Statistics, Chongqing University,
                             Chongqing, 401331,  China.}
\email[H.Q. Wang]{wanghuaqiao@cqu.edu.cn}

\title[Strong solutions of the LLB equation in Besov space]
{Strong solutions of the Landau--Lifshitz--Bloch equation in Besov space}
\thanks{Corresponding author: wanghuaqiao@cqu.edu.cn}
\keywords{Landau--Lifshitz--Bloch equation, Strong solutions, Existence and uniqueness, Besov space, Energy estimates.}\
\subjclass[2010]{82D40; 35K10; 35K59; 35D35.}

\begin{abstract}
We focus on the existence and uniqueness of the three--dimensional Landau--Lifshitz--Bloch equation supplemented with the initial data in Besov space $\dot{B}_{2,1}^{\frac{3}{2}}$. Utilizing a new commutator estimate, we establish the local existence and uniqueness of strong solutions for any initial data in $\dot{B}_{2,1}^{\frac{3}{2}}$. When the initial data is small enough in $\dot{B}_{2,1}^{\frac{3}{2}}$, we obtain the global existence and uniqueness. Furthermore,  we also establish a blow--up criterion of the solution to the Landau--Lifshitz--Bloch equation and then we prove the global existence of strong solutions in Sobolev space under a new condition based on the blow--up criterion.
\end{abstract}

\maketitle
\section{Introduction}
In recent decades, there have been many research results for micro--magnetic models due to their applications in industry and physics. One of those basic models is Landau--Lifshitz--Gilbert (LLG for short) equation (see \cite{G,LL}), which governs the motion of the magnetic spin below the Curie temperature. A simple version of the LLG equation is given by
\begin{equation}\label{LLG}
\begin{cases}
\partial_tm=-m\times\Delta m-\lambda m\times m\times \Delta m,\\
m|_{t=0}=m_0
\end{cases}
\end{equation}
where $m: \mathbb{R}^3\times\mathbb{R}^+\rightarrow \mathbb{S}^2$ denotes the spin director field, $\lambda>0$ is a Gilbert damping parameter.

There have been many research results about the well--posedness for the LLG equation. For instance, Huber \cite{H} studied the periodic solution to the LLG equation by perturbation argument and the spectral analysis on the corresponding linearized system. Under the smallness condition of initial gradients in the $L^n$ norm, Melcher \cite{M} obtained the existence, uniqueness and asymptotics of global smooth solutions for the LLG equation in dimension $n\geqslant3$. Later, Lin--Lai--Wang \cite{LLW} established the global well--posedness of the LLG equation in $\mathbb{R}^n$ ($n\geqslant2$) provided the initial gradients is small in a Morrey space. Recently, Feischl--Tran \cite{FT} proved that while the initial data is sufficiently close to a constant function, the three--dimensional LLG equation with homogeneous  Neumann  boundary  condition admits arbitrarily  regular solutions. Guti\'{e}rrez--De Laire \cite{GD} built the global existence of the self--similar solutions to LLG equation in any dimension under the hypothesis that the BMO semi--norm of the initial data is small. Di Fratta--Innerberger--Praetorius \cite{DIP} proved the weak--strong uniqueness of the LLG equation on a bounded domain in $\mathbb{R}^3$. Very recently, Wang--Guo \cite{WG} deduced a blow--up criterion for the LLG equation on bounded domain with Neumann boundary condition in n--dimensional ($n\geqslant2$) space. For the results of the stochastic LLG equation, we refer to \cite{BGJ,BGJ1,BL,BM,BMM} and the references therein.

While the electronic temperature is higher than the Curie temperature, this model must be replaced by Landau--Lifshitz--Bloch (LLB for short) equation (see \cite{GD1,GD,L}):
\begin{align}\label{LLB0}
\frac{\partial u}{\partial t}=\gamma u\times \mathcal{H}_{{\rm eff}}(u)+\frac{L_1}{|u|^2}(u\cdot\mathcal{H}_{{\rm eff}}(u))u-\frac{L_2}{|u|^2}u\times(u\times\mathcal{H}_{{\rm eff}}(u)),
\end{align}
where $|\cdot|$ is the Euclidean norm in $\mathbb{R}^3$, $u(t,x)\in \mathbb{R}^{3}$ is the magnetization vector, $\gamma>0$ denotes the gyromagnetic ratio, $L_1$ is the longitudinal damping parameter, $L_2$ is the transverse damping parameter, $\mathcal{H}_{{\rm eff}}(u)$ stands for the effective field witch is given by
$$\mathcal{H}_{{\rm eff}}(u)=\Delta u-\frac{1}{\chi_{||}}\left(1+\frac{3T}{5(T-T_c)|u|^2}\right)u,$$
where $\chi_{||}$ is the longitudinal susceptibility, $T_c$ denotes the Curie temperature. The LLB equation \eqref{LLB0} interpolates between the LLG equation and the Bloch equation, and it is able to govern the dynamical behavior of magnetic spin at all temperature. Setting $L_1=L_2=k_1$, the equation \eqref{LLB0} becomes
\begin{align}\label{LLB00}
\partial_tu=k_1\Delta u-k_2 u+\gamma u\times \Delta u-k_2\mu|u|^2u,
\end{align}
where $k_2=\frac{k_1}{\chi_{||}}$ and $\mu=\frac{3T}{5(T-T_c)}$.

Many authors were interested in the mathematical theories of the LLB equation. Le \cite{L} proved the existence of weak solution and also discussed the regularity properties. In terms of the following Generalized LLB equation
\begin{align*}
\partial_tu=k_1\Delta u+\gamma\nabla F(u)\times \Delta u-k_2(1+\mu\cdot F(u))\nabla F(u),
\end{align*}
for some $F\in \mathcal{C}^3(\mathbb{R}^3)$, Jia \cite{J} proved the local existence of a strong solution by using the Faedo--Galerkin approximation. Jia--Guo \cite{JG} discussed the existence and uniqueness of the LLB equation on m--dimensional closed Riemannian manifold. Later, Guo--Li \cite{GL} established the global existence of smooth solution in one--dimensional space. After that, Li--Guo--Zeng \cite{LGZ} obtained the global existence of smooth solution in $\mathbb{R}^2$ for any initial data, and in $\mathbb{R}^3$ for small initial data. Recently, Ayouch--Benmouane--Essouf \cite{ABE} established the uniqueness and local existence of the LLB equation in a bounded domain of $\mathbb{R}^3$. For the results of the stochastic LLB equation, see \cite{BGL,GM,JJW,QTW} and the references therein.

Since the LLG equation \eqref{LLG} is invariant under the following scaling transform:
$$m(x,t)\rightarrow m(\lambda x,\lambda^2t),\quad m_0(x)\rightarrow m_0(\lambda x),$$
for $\lambda>0$, $\dot{B}_{p,1}^{\frac{3}{p}}$ is a critical space for initial data. Guo--Huang\cite{GH} established the global well--posedness of the LLG equation with small initial data in critical space, and they also justified this global solution converges to that of Schr\"{o}dinger maps as the Gilbert damping term vanishes. Taking into account this fact, in the present paper, we consider the existence and uniqueness of solution in $\dot{B}_{2,1}^{\frac{3}{2}}$ to the following initial value problem:
\begin{equation}\label{LLB}
\begin{cases}
\partial_tu=\Delta u-\kappa u+u\times \Delta u-\kappa\mu|u|^2u,\\
u|_{t=0}=u_0,
\end{cases}
\end{equation}
where we suppose that the temperature is higher than Curie temperature, which means the coefficients in \eqref{LLB} are positive. Before stating our main results, let's present a few notations.

\textbf{Notation.} For any positive $A$ and $B$, we use the notation $A\lesssim B$ to mean that there exists a positive constant $C$ such that $A\leqslant CB$. And $A\thicksim B$ means $C_1A\leqslant B\leqslant C_2A$ for positive constants $C_1$, $C_2$. For every $p\in[1,\infty]$, let $\left\|\cdot\right\|_{L^{p}}$ denote the norm in the Lebesgue space $L^{p}$. For any normed space $X$, we employ the notation $L^{p}\left([0,T],X\right)$ to denote the space of functions $f$ such that for almost all $t\in (0,T)$, $f(t)\in X$ and $\left\|f(t)\right\|_{X}\in L^{p}(0,T)$. We simply denote the notation $L^{p}_{T}X:=L^{p}\left([0,T],X\right)$. The set of bounded continuous functions from interval $I\subset \mathbb{R}$ to $X$ is denoted by $\mathcal{C}_b\left(I; X\right)$.

For the sake of convenience, we define the space
$$E_2(T):=\left\{f\in \mathcal{C}\left([0,T];B_{2,1}^{\frac{3}{2}}\right)\cap L^1\left([0,T],B_{2,1}^{\frac{7}{2}}\right)\cap L^1\left([0,T],B_{2,1}^{\frac{3}{2}}\right)\right\},$$
and its global version $E_2$ (with $f\in \mathcal{C}_b\left(\mathbb{R}^+; B_{2,1}^{\frac{3}{2}}\right)$) if $T=+\infty$.

Our first result state the local well--posedness to the LLB equation supplemented with large initial data in Besove space $\dot{B}_{2,1}^{\frac{3}{2}}$.
\begin{Theorem}\label{Thm1.1}
For any initial data $u_0\in \dot{B}_{2,1}^{\frac{3}{2}}$, there exists a positive time $T$ such that the Cauchy problem \eqref{LLB} has a unique strong solution $u\in E_2(T)$.
\end{Theorem}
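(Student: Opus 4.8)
The plan is to construct the solution by a linearized iteration in the homogeneous Littlewood--Paley framework, arranging matters so that the whole quasilinear difficulty is concentrated in the commutator estimate announced in the abstract.

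\textbf{Linearized scheme and uniform bounds.} Set $u^{0}:=e^{t(\Delta-\kappa)}u_{0}$ and, given $u^{n}$, let $u^{n+1}$ solve the linear system
\[
\partial_{t}u^{n+1}-\Delta u^{n+1}+\kappa u^{n+1}-u^{n}\times\Delta u^{n+1}=-\kappa\mu|u^{n}|^{2}u^{n},\qquad u^{n+1}|_{t=0}=u_{0},
\]
which is well posed by the same energy estimates used below. Keeping the second--order term $u^{n}\times\Delta u^{n+1}$ on the left (rather than freezing it as $u^{n}\times\Delta u^{n}$) is essential: it is pointwise orthogonal to $u^{n+1}$, so the operator is parabolic and, more importantly, the $L^{2}$ energy method survives the frequency localization. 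Applying $\dot{\Delta}_{j}$, testing against $\dot{\Delta}_{j}u^{n+1}$, integrating by parts in the Laplacian, and writing $\dot{\Delta}_{j}(u^{n}\times\Delta u^{n+1})=u^{n}\times\Delta\dot{\Delta}_{j}u^{n+1}+[\dot{\Delta}_{j},u^{n}\times]\Delta u^{n+1}$, one uses $(a\times b)\cdot b=0$ and one further integration by parts to reduce $\int(u^{n}\times\Delta\dot{\Delta}_{j}u^{n+1})\cdot\dot{\Delta}_{j}u^{n+1}\,dx$ to the first--order quantity $-\int(\nabla u^{n}\times\nabla\dot{\Delta}_{j}u^{n+1})\cdot\dot{\Delta}_{j}u^{n+1}\,dx$, which, after the interpolation $\|w\|_{\dot{B}^{5/2}_{2,1}}\le\|w\|_{\dot{B}^{3/2}_{2,1}}^{1/2}\|w\|_{\dot{B}^{7/2}_{2,1}}^{1/2}$ and Young's inequality, is partly absorbed into the parabolic gain $\|u^{n+1}\|_{L^{1}_{T}\dot{B}^{7/2}_{2,1}}$. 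The residual commutator is controlled by the new estimate, of the schematic form $\sum_{j}2^{3j/2}\|[\dot{\Delta}_{j},u\times]\Delta w\|_{L^{2}}\lesssim\|u\|_{\dot{B}^{5/2}_{2,1}}\|w\|_{\dot{B}^{5/2}_{2,1}}$; the point is that the two extra derivatives are distributed symmetrically, so that after integration in time each factor carries the $L^{2}_{T}\dot{B}^{5/2}_{2,1}$ norm, which tends to $0$ with $T$. The cubic term is handled by Bony's decomposition and the algebra property $\dot{B}^{3/2}_{2,1}(\mathbb{R}^{3})\hookrightarrow L^{\infty}$. Multiplying the block estimates by $2^{3j/2}$, summing, and using that $\|e^{t(\Delta-\kappa)}u_{0}\|_{L^{1}_{T}\dot{B}^{7/2}_{2,1}}+\|e^{t(\Delta-\kappa)}u_{0}\|_{L^{2}_{T}\dot{B}^{5/2}_{2,1}}\to0$ as $T\to0$, a routine induction produces a time $T=T(\|u_{0}\|_{\dot{B}^{3/2}_{2,1}},\kappa,\mu)>0$ and a ball of $E_{2}(T)$ containing the whole sequence $(u^{n})$.

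\textbf{Convergence and passage to the limit.} The difference $w^{n+1}:=u^{n+1}-u^{n}$ solves a linear system of the same kind with source $w^{n}\times\Delta u^{n}$ plus cubic differences; since the quasilinear term costs one derivative, I would estimate $w^{n+1}$ one notch below the critical index, in $L^{\infty}_{T}\dot{B}^{1/2}_{2,1}\cap L^{1}_{T}\dot{B}^{5/2}_{2,1}$, repeating the block/commutator computation at regularity $\tfrac12$ and using the product law $\dot{B}^{3/2}_{2,1}\cdot\dot{B}^{1/2}_{2,1}\hookrightarrow\dot{B}^{1/2}_{2,1}$ for the source. For $T$ small this is a contraction, so $(u^{n})$ converges in $\mathcal{C}([0,T];\dot{B}^{1/2}_{2,1})$ to some $u$; the uniform bound then forces $u\in L^{\infty}_{T}\dot{B}^{3/2}_{2,1}\cap L^{1}_{T}\dot{B}^{7/2}_{2,1}$, and one passes to the limit in \eqref{LLB} in the distributional sense. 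Finally, reading off from the equation that each of $\Delta u$, $\kappa u$, $u\times\Delta u$ and $\kappa\mu|u|^{2}u$ lies in $L^{1}_{T}\dot{B}^{3/2}_{2,1}$ gives $\partial_{t}u\in L^{1}_{T}\dot{B}^{3/2}_{2,1}$ and hence $u\in\mathcal{C}([0,T];\dot{B}^{3/2}_{2,1})$, so $u\in E_{2}(T)$; the low--frequency bookkeeping needed to reconcile the homogeneous datum with the inhomogeneous space in the definition of $E_{2}(T)$ is standard on a bounded time interval.

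\textbf{Uniqueness and the main obstacle.} For two solutions $u,\bar{u}\in E_{2}(T)$ with the same datum, the difference $w=u-\bar{u}$ satisfies a linear equation with quasilinear term $\bar{u}\times\Delta w$ and source $w\times\Delta u$ plus cubic differences; the block energy estimate for $w$ in $\dot{B}^{1/2}_{2,1}$ --- the commutator estimate at index $\tfrac12$ taming $\bar{u}\times\Delta w$, the product law $\dot{B}^{1/2}_{2,1}\cdot\dot{B}^{3/2}_{2,1}\hookrightarrow\dot{B}^{1/2}_{2,1}$ bounding $w\times\Delta u$ by $\|u\|_{\dot{B}^{7/2}_{2,1}}\|w\|_{\dot{B}^{1/2}_{2,1}}$, and the algebra property for the cubic differences --- yields $\|w(t)\|_{\dot{B}^{1/2}_{2,1}}\lesssim\int_{0}^{t}g(s)\|w(s)\|_{\dot{B}^{1/2}_{2,1}}\,ds$ with $g\in L^{1}(0,T)$, whence $w\equiv0$ by Gronwall's lemma. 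The crux of the whole argument is the quasilinear term $u\times\Delta u$: at the critical index $\dot{B}^{3/2}_{2,1}$ a crude product bound for it closes only under a smallness assumption on $u_{0}$, and what rescues the large--data local theory is the combination of the cancellation $(a\times b)\cdot b=0$ with a commutator estimate for $[\dot{\Delta}_{j},u\times]\Delta w$ that is sharp enough to be summable against $2^{3j/2}$ in the $\dot{B}_{2,1}$ scale and, crucially, distributes the extra derivatives so that the resulting bilinear expression can absorb a factor small in $L^{2}$--in--time for short times. Proving that commutator estimate --- via a careful Bony decomposition of the three paraproduct pieces, with attention to the end--point that legitimizes the $\ell^{1}$ summation --- is the technical heart; the derivative loss in the iteration and in the uniqueness proof is the usual one and is dealt with by working one index below criticality.
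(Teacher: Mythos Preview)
Your approach is correct in spirit and would yield the theorem, but it differs from the paper's in several structural ways, and one step of your induction needs to be spelled out more carefully.

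\textbf{Comparison with the paper.} The paper does \emph{not} iterate: it writes $u=u^{L}+\tilde u$ with $u^{L}=e^{t\Delta}u_{0}$, derives a priori bounds for $\tilde u$ (which starts from zero, so all the smallness is carried by the time–integrated norms of $u^{L}$), then builds approximate solutions by Friedrichs spectral truncation and passes to the limit by Aubin--Lions compactness. Uniqueness is proved at the critical index $\dot B^{3/2}_{2,1}$ itself. The paper's commutator estimate (Lemma~2.4) is asymmetric, with a parameter $\rho>2$ producing factors $\|\cdot\|_{\dot B^{7/2-2/\rho}_{2,1}}\|\cdot\|_{\dot B^{3/2+2/\rho}_{2,1}}$; after interpolation this leads to a Gronwall weight $\psi(t)$ involving $\|u^{L}\|^{\rho}_{\dot B^{3/2+2/\rho}}$ and $\|u^{L}\|^{\rho/(\rho-1)}_{\dot B^{7/2-2/\rho}}$, which is integrable on $(0,T)$ for small $T$. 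Your route---linearized Picard scheme with the quasilinear term kept implicit, a symmetric commutator bound $\sum_{j}2^{3j/2}\|[\dot\Delta_{j},u\times]\Delta w\|_{L^{2}}\lesssim\|u\|_{\dot B^{5/2}_{2,1}}\|w\|_{\dot B^{5/2}_{2,1}}$, and contraction/uniqueness one index below in $\dot B^{1/2}_{2,1}$---is a legitimate alternative; the symmetric commutator bound is the $\rho=2$ endpoint of the paper's lemma, and it does hold thanks to the $\ell^{1}$ summation in the third Besov index. Your scheme gives constructive convergence (a Cauchy sequence rather than a compactness subsequence), at the cost of doing uniqueness at a subcritical level.

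\textbf{Where your sketch is thin.} The phrase ``a routine induction produces a time $T$ and a ball of $E_{2}(T)$'' hides the point the paper's splitting $u=u^{L}+\tilde u$ is designed to make transparent. From the summed energy inequality you only get
\[
\|u^{n+1}\|_{L^{\infty}_{T}\dot B^{3/2}_{2,1}}+C_{1}\|u^{n+1}\|_{L^{1}_{T}\dot B^{7/2}_{2,1}}\ \lesssim\ \|u_{0}\|_{\dot B^{3/2}_{2,1}}+\text{(nonlinear)},
\]
so $\|u^{n+1}\|_{L^{1}_{T}\dot B^{7/2}_{2,1}}$ is of size $\|u_{0}\|_{\dot B^{3/2}_{2,1}}$, not small; hence $\|u^{n+1}\|_{L^{2}_{T}\dot B^{5/2}_{2,1}}$ is not small either, and the induction as stated does not close. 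To fix this within your framework you must use the integrating--factor form of the block estimate,
\[
\|\dot\Delta_{j}u^{n+1}(t)\|_{L^{2}}\le e^{-c2^{2j}t}\|\dot\Delta_{j}u_{0}\|_{L^{2}}+\int_{0}^{t}e^{-c2^{2j}(t-s)}f_{j}(s)\,ds,
\]
so that after summing the initial--data contribution to $\|u^{n+1}\|_{L^{1}_{T}\dot B^{7/2}_{2,1}}$ is exactly $\sum_{j}c^{-1}(1-e^{-c2^{2j}T})2^{3j/2}\|\dot\Delta_{j}u_{0}\|_{L^{2}}\to 0$ as $T\to 0$; this is precisely the quantity $\|e^{t(\Delta-\kappa)}u_{0}\|_{L^{1}_{T}\dot B^{7/2}_{2,1}}$ you invoke. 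Once this is made explicit the induction does propagate the smallness of $\|u^{n}\|_{L^{2}_{T}\dot B^{5/2}_{2,1}}$ and everything closes. In effect you are performing the paper's $u=u^{L}+\tilde u$ decomposition implicitly at the level of each block; it would be cleaner (and closer to the paper) to do it explicitly.
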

Then the frame work of Theorem \ref{Thm1.1} enables us to obtain the following global result of the LLB equation with small initial data in $\dot{B}_{2,1}^{\frac{3}{2}}$.
\begin{Theorem}\label{Thm1.2}
If there exists a constant $\varepsilon>0$ such that $\|u_0\|_{\dot{B}_{2,1}^{\frac{3}{2}}}<\varepsilon$, then the Cauchy problem \eqref{LLB} has a unique global strong solution $u\in E_2$ satisfying
\begin{align}\label{Thm1.1-1}
\|u\|_{L^\infty\dot{B}_{2,1}^{\frac{3}{2}}}+\frac{C_1}{2}\|u\|_{L^1\dot{B}_{2,1}^{\frac{7}{2}}}+\frac{\kappa}{2}\|u\|_{L^1\dot{B}_{2,1}^{\frac{3}{2}}}\leqslant\varepsilon,
\end{align}
for some positive constant $C_1$.
\end{Theorem}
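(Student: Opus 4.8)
The plan is to read Theorem~\ref{Thm1.2} as the small-data, global continuation of the local theory of Theorem~\ref{Thm1.1}. First I record the local solution $u\in E_2(T)$ furnished by Theorem~\ref{Thm1.1}, let $T^\ast\in(0,+\infty]$ be its maximal time of existence, and reduce the whole statement to a \emph{global} a priori estimate for
$$
Y(T):=\|u\|_{L^\infty_T\dot B^{3/2}_{2,1}}+C_1\|u\|_{L^1_T\dot B^{7/2}_{2,1}}+\kappa\|u\|_{L^1_T\dot B^{3/2}_{2,1}},\qquad T<T^\ast,
$$
where $C_1>0$ is the constant produced by the linear smoothing estimate below, taken large enough to permit the absorption step. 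The starting point is the Duhamel formula
$$
u(t)=e^{t(\Delta-\kappa)}u_0+\int_0^t e^{(t-s)(\Delta-\kappa)}\bigl(u\times\Delta u-\kappa\mu|u|^2u\bigr)(s)\,ds,
$$
together with the maximal-regularity/smoothing estimates for the damped heat semigroup on the Besov scale: $e^{t(\Delta-\kappa)}u_0$ contributes at most $C_0\|u_0\|_{\dot B^{3/2}_{2,1}}$ to $Y(T)$, while the Duhamel term driven by a source $f$ gains two derivatives and contributes at most $C_0\|f\|_{L^1_T\dot B^{3/2}_{2,1}}$ to each of the three pieces of $Y(T)$; here the damping $-\kappa u$ only improves these bounds.

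Next I would estimate the two nonlinearities in $\dot B^{3/2}_{2,1}$. Since $\dot B^{3/2}_{2,1}(\mathbb R^3)$ is a Banach algebra, $\||u|^2u\|_{\dot B^{3/2}_{2,1}}\lesssim\|u\|_{\dot B^{3/2}_{2,1}}^3$, hence $\kappa\mu\||u|^2u\|_{L^1_T\dot B^{3/2}_{2,1}}\lesssim\mu\,\|u\|_{L^\infty_T\dot B^{3/2}_{2,1}}^2\bigl(\kappa\|u\|_{L^1_T\dot B^{3/2}_{2,1}}\bigr)$. The genuinely delicate term is the quasilinear one $u\times\Delta u$, which carries two derivatives at the critical regularity: here I would invoke the Bony paracomposition together with the new commutator estimate underlying Theorem~\ref{Thm1.1}, using the cancellations $\nabla u\times\nabla u=0$ and $(u\times a)\cdot a=0$ at the level of the dyadic blocks to reduce the top-order interaction to commutators $[\dot\Delta_j,u\times]\Delta u$ obeying $\sum_j 2^{3j/2}\|[\dot\Delta_j,u\times]\Delta u\|_{L^2}\lesssim\|u\|_{\dot B^{3/2}_{2,1}}\|u\|_{\dot B^{7/2}_{2,1}}$, so that $\|u\times\Delta u\|_{L^1_T\dot B^{3/2}_{2,1}}\lesssim\|u\|_{L^\infty_T\dot B^{3/2}_{2,1}}\|u\|_{L^1_T\dot B^{7/2}_{2,1}}$. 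Substituting these into Duhamel gives a closed inequality
$$
Y(T)\le C_0\|u_0\|_{\dot B^{3/2}_{2,1}}+C_\ast\bigl(Y(T)^2+\mu\,Y(T)^3\bigr),\qquad 0\le T<T^\ast.
$$

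To close it I would run a standard continuity/bootstrap argument: $T\mapsto Y(T)$ is nondecreasing and continuous on $[0,T^\ast)$ with $Y(0)=\|u_0\|_{\dot B^{3/2}_{2,1}}<\varepsilon$, so if $\varepsilon$ is chosen small enough in terms of $C_0,C_\ast,\mu$, the set $\{T<T^\ast:\ Y(T)\le 2C_0\varepsilon\}$ is open, closed and nonempty in $[0,T^\ast)$, hence all of $[0,T^\ast)$; moving half of the $C_1$- and $\kappa$-weighted nonlinear contributions to the left (which is what the factors $\tfrac12$ in \eqref{Thm1.1-1} record) and relabelling $\varepsilon$ yields exactly \eqref{Thm1.1-1}. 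Finally, this bound makes $\|u\|_{L^1_{T^\ast}\dot B^{7/2}_{2,1}}$ and $\|u\|_{L^1_{T^\ast}\dot B^{3/2}_{2,1}}$ finite, so that $\partial_t u\in L^1_{T^\ast}\dot B^{3/2}_{2,1}$ and $u(t)-u(s)=\int_s^t\partial_t u$ shows $u(t)$ is Cauchy in $\dot B^{3/2}_{2,1}$ as $t\uparrow T^\ast$; were $T^\ast<+\infty$, applying Theorem~\ref{Thm1.1} with this limit as data at time $T^\ast$ would extend the solution, contradicting maximality. Hence $T^\ast=+\infty$, $u\in E_2$, and \eqref{Thm1.1-1} holds globally. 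Uniqueness is then obtained by estimating the difference $w=u_1-u_2$ of two solutions with the same data \emph{one derivative below} the critical level: $w$ solves $\partial_t w-\Delta w+\kappa w=u_1\times\Delta w+w\times\Delta u_2-\kappa\mu(|u_1|^2u_1-|u_2|^2u_2)$ with $w(0)=0$, the product and multiplier laws bound the right-hand side in $L^1_T\dot B^{1/2}_{2,1}$ by $\|u_1\|_{L^\infty_T\dot B^{3/2}_{2,1}}\|w\|_{L^1_T\dot B^{5/2}_{2,1}}+\|u_2\|_{L^1_T\dot B^{7/2}_{2,1}}\|w\|_{L^\infty_T\dot B^{1/2}_{2,1}}+(\|u_1\|^2+\|u_2\|^2)_{L^\infty_T\dot B^{3/2}_{2,1}}\|w\|_{L^1_T\dot B^{1/2}_{2,1}}$ up to $\kappa,\mu$ factors, and since \eqref{Thm1.1-1} makes all the $u_i$-norms globally $\lesssim\varepsilon$, the damped-heat smoothing absorbs every term on the right into the left, forcing $w\equiv0$.

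The step I expect to be the main obstacle is the treatment of $u\times\Delta u$: its two derivatives must be recovered \emph{without loss} at the endpoint regularity $\dot B^{3/2}_{2,1}$, which is precisely where the cross-product cancellations and the new commutator estimate are indispensable; a secondary difficulty is making the smallness propagate \emph{uniformly in $T$} for the time-integrated norms, which forces one to use both the dissipation $\Delta$ and the damping $-\kappa u$, and the continuation at $T^\ast$ requires the Cauchy-in-$\dot B^{3/2}_{2,1}$ argument above because in a critical space the local existence time is not controlled by $\|u_0\|_{\dot B^{3/2}_{2,1}}$ alone.
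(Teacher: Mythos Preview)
Your proposal is correct but follows a genuinely different route from the paper in several respects.

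\textbf{Construction and a priori estimate.} The paper does \emph{not} start from the local solution of Theorem~\ref{Thm1.1} and continue it; instead it reruns the Friedrichs approximation scheme from scratch, proving directly that the approximate solutions $u^n$ are global (via the a priori bound of Proposition~\ref{proposition3.2}) and then passing to the limit by compactness. The a priori bound itself is obtained by an energy method---applying $\dot\Delta_j$, taking the $L^2$ inner product with $\dot\Delta_j u$, and summing---rather than by Duhamel plus maximal regularity as you do. Both routes are standard and lead to the same closed inequality; yours is arguably cleaner for the continuation step, while the paper's avoids having to argue that the limit $u(T^\ast)$ exists in $\dot B^{3/2}_{2,1}$.

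\textbf{The term $u\times\Delta u$.} You flag this as the main obstacle and bring in the commutator estimate of Lemma~\ref{Lemma4} together with the cross-product cancellations. This works, but it is unnecessary here: since $\dot B^{3/2}_{2,1}(\mathbb R^3)$ is an algebra, the product law \eqref{product est} gives directly $\|u\times\Delta u\|_{\dot B^{3/2}_{2,1}}\lesssim\|u\|_{\dot B^{3/2}_{2,1}}\|u\|_{\dot B^{7/2}_{2,1}}$, and that is exactly what the paper uses (see \eqref{est21}--\eqref{est22}). The paper's introduction in fact singles this out: the commutator trick of Lemma~\ref{Lemma4} is needed only for the \emph{large-data local} result (Theorem~\ref{Thm1.1}), where one must prevent $\|u^L\|_{L^\infty_T\dot B^{3/2}_{2,1}}$ from multiplying the dissipation; in the small-data global case the direct product estimate already closes.

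\textbf{Uniqueness.} You estimate the difference one derivative below, in $\dot B^{1/2}_{2,1}$; the paper stays at the critical level $\dot B^{3/2}_{2,1}$ and absorbs the dangerous term $\|u_2\|_{\dot B^{3/2}_{2,1}}\|\delta u\|_{\dot B^{7/2}_{2,1}}$ using the global smallness $\|u_2\|_{L^\infty\dot B^{3/2}_{2,1}}\le\varepsilon$ (see \eqref{est26}). Either works.
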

Our third goal of this paper is to obtain the global well--posedness of the LLB equation in Sobolev space under a new condition by establishing a blow--up criterion.
\begin{Theorem}[Blow-up]\label{Thm1.3}
Suppose $u_0\in H^m$,  $m\geqslant 2$. For the first blow--up time $T^*<\infty$ of the strong solution to the equation \eqref{LLB}, we have
\begin{align*}
\int_0^{T^*}\|u\|^{2}_{\dot{B}^{\frac{3}{p}}_{p,1}}+\|u\|^{\frac{2}{2-\delta}}_{\dot{B}^{2-\delta}_{\infty,\infty}}dt=\infty,
\end{align*}
where $\delta\in (1,2)$.
\end{Theorem}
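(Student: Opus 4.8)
The strategy is a contradiction argument built on a priori $H^m$ energy estimates. By the local well--posedness of \eqref{LLB} in $H^m$ (see, e.g., \cite{J}) together with the standard continuation principle, if $T^\ast<\infty$ is the first blow--up time of the strong solution then necessarily $\limsup_{t\to T^\ast}\|u(t)\|_{H^m}=\infty$. Suppose, towards a contradiction, that
\[
\int_0^{T^\ast}\Big(\|u\|_{\dot B^{\frac3p}_{p,1}}^{2}+\|u\|_{\dot B^{2-\delta}_{\infty,\infty}}^{\frac{2}{2-\delta}}\Big)\,dt<\infty .
\]
It then suffices to show $\sup_{t\in[0,T^\ast)}\|u(t)\|_{H^m}<\infty$, which contradicts the blow--up; so the entire task reduces to producing a differential inequality for $\|u(t)\|_{H^m}^2$ whose coefficient is an $L^1(0,T^\ast)$ function built precisely out of these two Besov quantities.

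To that end I would run the energy estimates at order $\le m$. Pairing \eqref{LLB} with $u$ and using the pointwise identity $(u\times\Delta u)\cdot u\equiv0$ gives $\|u(t)\|_{L^2}\le\|u_0\|_{L^2}$ and $\int_0^{T^\ast}\|\nabla u\|_{L^2}^2\,dt<\infty$. Next, for each multi--index $\alpha$ with $1\le|\alpha|\le m$, apply $\partial^\alpha$ to \eqref{LLB}, take the $L^2$ inner product with $\partial^\alpha u$ and sum; using $\int\partial^\alpha\Delta u\cdot\partial^\alpha u\,dx=-\|\nabla\partial^\alpha u\|_{L^2}^2$ and discarding the favourable term $-\kappa\|\partial^\alpha u\|_{L^2}^2$, one arrives at
\[
\tfrac12\tfrac{d}{dt}\|u\|_{H^m}^2+c\,\|u\|_{\dot H^{m+1}}^2\ \le\ |\mathrm{NL}_1|+|\mathrm{NL}_2|,
\]
with $\mathrm{NL}_2$ collecting the contributions of $-\kappa\mu|u|^2u$ and $\mathrm{NL}_1=\sum_{|\alpha|\le m}\int\partial^\alpha(u\times\Delta u)\cdot\partial^\alpha u\,dx$. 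The cubic term is routine: by the Moser inequality and the embedding $\dot B^{\frac3p}_{p,1}\hookrightarrow L^\infty$ one gets $|\mathrm{NL}_2|\lesssim\|u\|_{L^\infty}^2\|u\|_{H^m}^2\lesssim\|u\|_{\dot B^{\frac3p}_{p,1}}^2\|u\|_{H^m}^2$, which is exactly of the desired type (this is also where $p$ stays free, since $\dot B^{\frac3p}_{p,1}\hookrightarrow L^\infty$ for every admissible $p$).

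The heart of the matter is $\mathrm{NL}_1$, the gyromagnetic term. Here I would split $\partial^\alpha(u\times\Delta u)=u\times\partial^\alpha\Delta u+[\partial^\alpha,u\,\times]\Delta u$. The leading piece is tamed using the antisymmetry of the cross product together with one integration by parts, which rewrites $\int(u\times\partial^\alpha\Delta u)\cdot\partial^\alpha u\,dx=-\int(\partial_i\partial^\alpha u\times\partial^\alpha u)\cdot\partial_i u\,dx$, a trilinear expression carrying $2m+2$ derivatives. The point is that this (and the commutator remainder $[\partial^\alpha,u\,\times]\Delta u$, after integrating one derivative by parts to balance the orders) can be estimated — through a Littlewood--Paley/Bony decomposition and a Gagliardo--Nirenberg-type interpolation against the dissipation norm $\dot H^{m+1}$ — by $\|u\|_{\dot H^{m+1}}^{\delta}\,\|u\|_{\dot H^m}^{2-\delta}\,\|u\|_{\dot B^{2-\delta}_{\infty,\infty}}$ plus terms of the type $\|u\|_{\dot B^{\frac3p}_{p,1}}^{2}\|u\|_{H^m}^2$; the derivative bookkeeping is consistent, $\delta(m+1)+(2-\delta)m+(2-\delta)=2m+2$. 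Young's inequality with the conjugate exponents $\tfrac2\delta$ and $\tfrac2{2-\delta}$ then absorbs the $\dot H^{m+1}$ factor into $c\|u\|_{\dot H^{m+1}}^2$ and leaves exactly the weight $\|u\|_{\dot B^{2-\delta}_{\infty,\infty}}^{2/(2-\delta)}\|u\|_{H^m}^2$; the constraint $\delta\in(1,2)$ is precisely what makes the interpolation legitimate and the Young step close linearly in $\|u\|_{H^m}^2$.

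Collecting the three estimates yields
\[
\tfrac{d}{dt}\|u\|_{H^m}^2+c\,\|u\|_{\dot H^{m+1}}^2\ \le\ C\Big(\|u\|_{\dot B^{\frac3p}_{p,1}}^{2}+\|u\|_{\dot B^{2-\delta}_{\infty,\infty}}^{\frac{2}{2-\delta}}\Big)\|u\|_{H^m}^2=:g(t)\,\|u\|_{H^m}^2 ,
\]
and $g\in L^1(0,T^\ast)$ by the contradiction hypothesis, so Gronwall's inequality gives $\|u(t)\|_{H^m}^2\le\|u_0\|_{H^m}^2\exp\!\big(\int_0^{T^\ast}g\,dt\big)<\infty$ uniformly on $[0,T^\ast)$, contradicting $\limsup_{t\to T^\ast}\|u(t)\|_{H^m}=\infty$; this proves the theorem. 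I expect the commutator/interpolation estimate for $[\partial^\alpha,u\,\times]\Delta u$ to be the main obstacle: the factor $u$ is only granted the sub--Lipschitz Hölder regularity measured by $\|u\|_{\dot B^{2-\delta}_{\infty,\infty}}$ with $2-\delta\in(0,1)$, so the classical Lipschitz--based commutator bounds are unavailable, and the dyadic analysis must be organised so that only that norm is charged (never $\|\nabla u\|_{L^\infty}$ or $\|\Delta u\|_{L^\infty}$), with the exponents tuned so that the final Young/Gronwall step is linear in $\|u\|_{H^m}^2$.
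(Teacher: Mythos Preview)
Your proposal is correct and the overall architecture---energy estimate, cancellation from the antisymmetry of the cross product, interpolation to extract the $\dot B^{2-\delta}_{\infty,\infty}$ weight, Young's inequality to absorb $\dot H^{m+1}$, then Gronwall and contradiction---coincides with the paper's. The one substantive technical difference is in how the $\dot H^m$ energy is organised: you differentiate with classical multi--indices $\partial^\alpha$ and face the commutator $[\partial^\alpha,u\times]\Delta u$, whereas the paper works entirely in frequency space, applying $\dot\Delta_j$ to the equation, summing $2^{2jm}$, and running a Bony paraproduct decomposition on $\dot\Delta_j(u\times\nabla u)$. Your vanishing of the leading piece via $(u\times\partial_i\partial^\alpha u)\cdot\partial_i\partial^\alpha u=0$ is realised in the paper as the vanishing of the diagonal paraproduct piece $I_{112}$; the remaining paraproduct and remainder terms are then estimated dyadically to land exactly on $\|u\|_{\dot H^{m+1}}^{\delta}\|u\|_{\dot H^m}^{2-\delta}\|u\|_{\dot B^{2-\delta}_{\infty,\infty}}$. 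The payoff of the paper's route is that the only commutator arising is the first--order $[\dot\Delta_j,\dot S_{j'-1}u\times]$, dispatched by \eqref{commu estimate1}, so the sub--Lipschitz obstacle you flag never materialises; your $\partial^\alpha$--based framing is equally valid but, as you anticipate, requires you to redo that dyadic analysis inside the estimate of $[\partial^\alpha,u\times]\Delta u$ rather than having it built into the energy framework from the start.
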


\begin{Theorem}\label{Thm1.4}
Under the hypothesis of Theorem \ref{Thm1.3}, if there exists a small $\eta>0$ such that $\|u_0\|_{\dot{B}_{p,1}^{\frac{3}{p}}}<\eta$ ($1<p<\infty$), then LLB equation \eqref{LLB} admits a unique global solution $u\in \mathcal{C}(0,\infty;H^m(\mathbb{R}^3))$.
\end{Theorem}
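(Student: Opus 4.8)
The plan is a continuation argument combining the local $H^m$--theory, the blow--up criterion of Theorem~\ref{Thm1.3}, and an a priori smallness bound in the critical Besov space $\dot{B}^{\frac{3}{p}}_{p,1}$ (the $L^p$--analogue of the estimate behind Theorem~\ref{Thm1.2}). For $u_0\in H^m$, $m\geqslant 2$, the local theory underlying Theorem~\ref{Thm1.3} provides a maximal existence time $T^{*}\in(0,\infty]$ and a unique solution $u\in\mathcal{C}\big([0,T^{*});H^m\big)$; it suffices to prove $T^{*}=\infty$. Assume, for contradiction, that $T^{*}<\infty$. Then Theorem~\ref{Thm1.3} forces
\[
\int_0^{T^{*}}\|u\|^{2}_{\dot{B}^{\frac{3}{p}}_{p,1}}+\|u\|^{\frac{2}{2-\delta}}_{\dot{B}^{2-\delta}_{\infty,\infty}}\,dt=\infty ,
\]
so it is enough to bound this quantity using only $\|u_0\|_{\dot{B}^{\frac{3}{p}}_{p,1}}<\eta$.

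Since $u$ is already a strong solution on $[0,T^{*})$, the frequency--localized energy estimates are rigorous. Applying $\dot\Delta_j$ to \eqref{LLB}, pairing with $|\dot\Delta_j u|^{p-2}\dot\Delta_j u$, integrating in space, and summing over $j$ with the weights $2^{\frac{3}{p}j}$: the heat part contributes the dissipation $\|u\|_{L^1_t\dot{B}^{\frac{3}{p}+2}_{p,1}}$ and $-\kappa u$ contributes $\kappa\|u\|_{L^1_t\dot{B}^{\frac{3}{p}}_{p,1}}$, both kept on the left; on the right, $|u|^{2}u$ is controlled by the Banach--algebra property of $\dot{B}^{\frac{3}{p}}_{p,1}$, and the quasilinear term $u\times\Delta u$ --- the only one with two derivatives --- is controlled by the commutator estimate of the paper (or, directly, by the algebra property together with $\|\Delta u\|_{\dot{B}^{\frac{3}{p}}_{p,1}}\sim\|u\|_{\dot{B}^{\frac{3}{p}+2}_{p,1}}$), hence is quadratic and absorbed by the dissipation as soon as $\|u\|_{L^\infty_t\dot{B}^{\frac{3}{p}}_{p,1}}$ is small. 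Writing
\[
\mathcal{E}(t):=\|u\|_{L^\infty_t\dot{B}^{\frac{3}{p}}_{p,1}}+C_1\|u\|_{L^1_t\dot{B}^{\frac{3}{p}+2}_{p,1}}+\kappa\|u\|_{L^1_t\dot{B}^{\frac{3}{p}}_{p,1}},
\]
one obtains, for every $t<T^{*}$ with $\mathcal{E}(t)\leqslant 2\eta$, an inequality $\mathcal{E}(t)\leqslant\|u_0\|_{\dot{B}^{\frac{3}{p}}_{p,1}}+C\big(\eta^{2}+\eta^{3}\big)$. Choosing $\eta$ so small that $C(\eta^{2}+\eta^{3})\leqslant\eta$ and using that $t\mapsto\mathcal{E}(t)$ is continuous with $\mathcal{E}(0)=\|u_0\|_{\dot{B}^{\frac{3}{p}}_{p,1}}<\eta$, a standard bootstrap gives $\mathcal{E}(t)\leqslant 2\eta$ for all $t<T^{*}$, hence $\mathcal{E}(T^{*})\leqslant 2\eta$.

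It remains to deduce finiteness of the two integrals from $\mathcal{E}(T^{*})\leqslant 2\eta$. For the first, H\"older in time yields $\int_0^{T^{*}}\|u\|^{2}_{\dot{B}^{\frac{3}{p}}_{p,1}}\,dt\leqslant\|u\|_{L^\infty_{T^{*}}\dot{B}^{\frac{3}{p}}_{p,1}}\|u\|_{L^1_{T^{*}}\dot{B}^{\frac{3}{p}}_{p,1}}<\infty$. For the second, interpolation and the embeddings $\dot{B}^{\frac{3}{p}}_{p,1}\hookrightarrow\dot{B}^{0}_{\infty,\infty}$, $\dot{B}^{\frac{3}{p}+2}_{p,1}\hookrightarrow\dot{B}^{2}_{\infty,\infty}$ give
\[
\|u\|^{\frac{2}{2-\delta}}_{\dot{B}^{2-\delta}_{\infty,\infty}}\lesssim\|u\|_{\dot{B}^{0}_{\infty,\infty}}^{\frac{\delta}{2-\delta}}\|u\|_{\dot{B}^{2}_{\infty,\infty}}\lesssim\|u\|_{\dot{B}^{\frac{3}{p}}_{p,1}}^{\frac{\delta}{2-\delta}}\|u\|_{\dot{B}^{\frac{3}{p}+2}_{p,1}},
\]
so that $\int_0^{T^{*}}\|u\|^{\frac{2}{2-\delta}}_{\dot{B}^{2-\delta}_{\infty,\infty}}\,dt\lesssim\|u\|_{L^\infty_{T^{*}}\dot{B}^{\frac{3}{p}}_{p,1}}^{\frac{\delta}{2-\delta}}\|u\|_{L^1_{T^{*}}\dot{B}^{\frac{3}{p}+2}_{p,1}}<\infty$. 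Both integrals are finite, contradicting Theorem~\ref{Thm1.3}. Therefore $T^{*}=\infty$, the $H^m$--solution is global, $u\in\mathcal{C}\big(0,\infty;H^m(\mathbb{R}^3)\big)$, and uniqueness is inherited from the local theory.

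The main obstacle is the a priori estimate of the second step: transplanting the critical Besov analysis from the Hilbert setting $p=2$ of Theorem~\ref{Thm1.2} to general $p\in(1,\infty)$, and in particular closing the quasilinear term $u\times\Delta u$ in an $L^p$--based space where the $p=2$ cancellation $\int(u\times\Delta\dot\Delta_j u)\cdot\dot\Delta_j u\,dx=0$ is not available. One must also verify that the $H^m$--solution has enough regularity and low--frequency decay for the homogeneous Littlewood--Paley computations and embeddings used above to be legitimate (true, since $H^m\hookrightarrow L^2\subset\mathcal{S}'_h$ for $m\geqslant 2$), and that the $\eta$ in Theorem~\ref{Thm1.4} is taken below the bootstrap threshold of the second step.
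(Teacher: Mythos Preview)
Your proposal is correct and follows essentially the same route as the paper: localize, pair with $|\dot\Delta_j u|^{p-2}\dot\Delta_j u$, extract the $2^{2j}$--dissipation via the Danchin--type inequality of Lemma~\ref{Lemma2}, bound $u\times\Delta u$ and $|u|^2u$ by the product/algebra estimate~\eqref{product est}, bootstrap the smallness, and finish with interpolation and the blow--up criterion. Your final paragraph overstates the difficulty of the quasilinear term --- no commutator or cancellation is needed here, since the direct product bound $\|u\times\Delta u\|_{\dot B^{3/p}_{p,1}}\lesssim\|u\|_{\dot B^{3/p}_{p,1}}\|u\|_{\dot B^{3/p+2}_{p,1}}$ already closes against the dissipation under smallness, exactly as in the paper's~\eqref{est1}.
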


Now, we sketch the strategy of proving these four theorems and point out some of the main difficult and techniques involved in the process. To prove Theorem \ref{Thm1.1}, we divide the LLB equation \eqref{LLB} into two parts: the known one $u^L:=e^{t\Delta}u_0$, and $\tilde{u}=u-u^L$, and we focus on the solvability to $\tilde{u}$. We apply a priori estimate on the approximate solutions constructed by Friedrichs method to obtain the uniform bounds in $E_2(T)$, then the local existence of the strong solution in $E_2(T)$ is obtained by a standard compactness argument. In this process, the main difficult occurs in the a priori estimate. The basic estimate (that is the priori estimate of Theorem \ref{Thm1.2}), obtained by product estimate \eqref{product est} directly, fails to handle the nonlinear term $u\times\Delta u$. More specifically, the term $\|u^L\|_{L^\infty_T\dot{B}_{2,1}^{\frac{3}{2}}}\|\tilde{u}\|_{L^1_T\dot{B}_{2,1}^{\frac{7}{2}}}$ on the right--hand side will not be cancelled out by the dissipation term $\|\tilde{u}\|_{L^1_T\dot{B}_{2,1}^{\frac{7}{2}}}$ on the left--hand side through setting $T$ small enough. We develop a new commutator estimate established in Lemma \ref{Lemma4} on the nonlinear term $u\times\Delta u$ to overcome this difficult. We prove Theorem \ref{Thm1.2} by running a same process of Theorem \ref{Thm1.1}. Under the frame of energy method, combining the Bernstein--like inequality \eqref{new Bernstein ineq} and the blow--up criterion established in Theorem \ref{Thm1.3}, we prove the global existence of strong solution to the LLB equation in Sobolev space under a new smallness condition in Theorem \ref{Thm1.4}.

This paper is arranged as follows. In Section \ref{sec2}, we recall same basic definitions and propositions of Besov spaces, and we also list some useful lemmas. In Section \ref{sec3}, we prove the main results. More precisely, we provide the proof of Theorem \ref{Thm1.1} in Section \ref{sec3.1}, while the following subsection gives the proof of Theorem \ref{Thm1.2}. Finally, we prove Theorem \ref{Thm1.3} and Theorem \ref{Thm1.4} in Section \ref{sec3.4} and Section \ref{sec3.5}, respectively.

\section{Preliminaries}\label{sec2}
In this section, we recall the homogeneous Littlewood--Paley decomposition theory and some basic facts in Besov spaces, and we also list some useful lemmas.

The Fourier transform of $f\in L^1(\mathbb{R}^3)$ is defined by
$$\mathcal{F}f(\xi)=\hat{f}(\xi):=\int_{\mathbb{R}^{3}}e^{-2\pi ix\cdot\xi}f(x)dx,$$
and its inverse is defined by
$$\mathcal{F}^{-1}g(x)=\check{g}(x):=\int_{\mathbb{R}^{3}}e^{2\pi ix\cdot\xi}g(\xi)d\xi.$$
Let $\mathcal{S}(\mathbb{R}^3)$ be the Schwartz space, and then the Fourier transform of $f\in \mathcal{S}'$ is the tempered distribution $\hat{f}$ given by
$$<\hat{f},\varphi>=<f,\hat{\varphi}>,\;\;\varphi\in \mathcal{S}.$$
Especially, for any $f\in L^2(\mathbb{R}^3)$, we have $\hat{f}\in L^2(\mathbb{R}^3)$ with
$$\hat{f}(\xi)=\lim\limits_{M\rightarrow \infty}\int_{|x|<M}e^{-2\pi ix\cdot\xi}f(x)dx.$$
For more details, see \cite[Chapter 1]{DZ}.

Let $\mathcal{C}=\{\xi\in\mathbb{R}^{3}:\frac{3}{4}\leqslant|\xi|\leqslant\frac{8}{3}\}$, then there exist smooth radial functions $0\leqslant\varphi,\chi\leqslant1$, supported in $\mathcal{C}$ and $B(0,\frac{4}{3})$, respectively, such that
$$\chi(\xi)+\sum\limits_{q \geqslant0}\varphi(2^{-q}\xi)=1,\;\forall\xi\in\mathbb{R}^{3},$$
$$\sum\limits_{q \in\mathbb{Z}}\varphi(2^{-q}\xi)=1,\;\forall\xi\in\mathbb{R}^{3}\setminus\{0\}.$$
Setting $\varphi_{j}=\varphi(2^{-j}\xi)$ and $\chi_{j}=\chi(2^{-j}\xi)$, for any $u\in\mathcal{S'}(\mathbb{R}^{3})$, the homogeneoous dyadic blocks $\dot{\Delta}_{j}$ and the homogeneous low--frequency cut--off operators $\dot{S}_{j}$ are defined by
$$\dot{\Delta}_{j}f=\varphi(2^{-j}D)f=\left(\mathcal{F}^{-1}\varphi_{j}\right)\ast f,\;j\in\mathbb{Z},$$
$$\dot{S}_{j}=\chi(2^{-j}D)f=\left(\mathcal{F}^{-1}\chi_{j}\right)\ast f,\;j\in\mathbb{Z}.$$
For any $f\in \mathcal{S'}(\mathbb{R}^{3})\setminus \mathscr {P}=\mathcal{S'}_{h}(\mathbb{R}^{3})$, there holds
$$f=\sum\limits_{j\in\mathbb{Z}}\dot{\Delta}_{j}f,\;
\dot{S}_{j}f=\sum\limits_{j'\leqslant j-1}\dot{\Delta}_{j'}f,\;\forall j\in\mathbb{Z},$$
where $\mathscr{P}$ is the set of polynomials.

Now, let's recall the definition of the homogenous Besov spaces.
\begin{Definition}
Let $s\in\mathbb{R}$, $1\leqslant p,r\leqslant\infty$, the homogenous Besov spaces $\dot{B}_{p,r}^{s}$ are defined by
$$\dot{B}_{p,r}^{s}=\left\{f\in\mathcal{S'}_{h}(\mathbb{R}^{3}): \parallel f\parallel_{\dot{B}_{p,r}^{s}}<\infty\right\},$$
 where
\begin{equation*}
\parallel f\parallel_{\dot{B}_{p,r}^{s}}=\left\{\begin{array}{clcc}
\left(\sum\limits_{j\in\mathbb{Z}}2^{rjs}\left\|\dot{\Delta}_{j}f\right\|_{L^{p}}^{r}\right)^{\frac{1}{r}},& for&1\leqslant r<\infty,\\
\sup\limits_{j\in\mathbb{Z}}2^{js}\left\|\dot{\Delta}_{j}f\right\|_{L^{p}},& for&r=\infty.
\end{array}\right.
\end{equation*}
\end{Definition}
We also introduce the homogeneous Sobolev space $\dot{H}^{s}(\mathbb{R}^{3})$, for any $s\in \mathbb{R}$, as the subspace of tempered distributions whose Fourier transform is locally integrable and the following norm is finite:
$$\left\|f\right\|_{\dot{H}^{s}}=\left(\int_{\mathbb{R}^{3}}\left|\xi\right|^{2s}\left|\hat{f}(\xi)\right|^{2}d\xi\right)^{\frac{1}{2}}.$$
And the Sobolev space $H^s(\mathbb{R}^{3})$ ($s\in \mathbb{R}$) consists of tempered distributions $f$ such that $\hat{f}\in L^2_{loc}(\mathbb{R}^{3})$ and
$$\left\|f\right\|_{H^{s}}=\left(\int_{\mathbb{R}^{3}}(1+\left|\xi\right|^2)^{s}\left|\hat{f}(\xi)\right|^{2}d\xi\right)^{\frac{1}{2}}<\infty.$$

Next, we will list some basic facts on Besov spaces, which have been proved in \cite{BCD}.
\begin{Proposition}\label{p1}
The following propositions hold true:\\
\begin{enumerate}
  \item Let $1\leqslant p,r\leqslant \infty$, $s\in \mathbb{R}$, then
$$\left\|D^{k}f\right\|_{\dot{B}_{p,r}^{s}}\sim \left\|f\right\|_{\dot{B}_{p,r}^{s+k}}.$$
\item For any $\theta\in (0,1)$, $s_{1},s_{2}\in \mathbb{R}$ such that $s_{1}<s_{2}$, $f\in \dot{B}_{p,r}^{s_{1}}\cap \dot{B}_{p,r}^{s_{2}}$,
then $f\in \dot{B}_{p,r}^{\theta s_{1}+(1-\theta) s_{2}}$ with
\begin{align}\label{interpolation}
\left\|f\right\|_{\dot{B}_{p,r}^{\theta s_{1}+(1-\theta) s_{2}}}\leqslant
C\left\|f\right\|^{\theta}_{\dot{B}_{p,r}^{ s_{1}}}\left\|f\right\|^{(1-\theta)}_{\dot{B}_{p,r}^{s_{2}}}.
\end{align}
\item For any $1\leqslant p_1\leqslant p_2\leqslant \infty$, $1\leqslant r_1\leqslant r_2\leqslant \infty$, $s\in \mathbb{R}$, we have
\begin{align}\label{embedding}
\dot{B}_{p_1,r_1}^s\hookrightarrow\dot{B}_{p_2,r_2}^{s-d(\frac{1}{p_1}-\frac{1}{p_2})}.
\end{align}
\item The operators $\dot{\Delta}_{j}$ and $\dot{S}_{j}$ map $L^p$ into $L^p$ with norms independent of $j$ and $p$.
\item Let $1\leqslant p,r\leqslant \infty$, $s\in (0,\infty)$, then the space $L^{\infty}\cap \dot{B}_{p,r}^{s}$is an algebra with
\begin{align}\label{algebra}
\left\|fg\right\|_{\dot{B}_{p,r}^{s}}\leqslant \frac{C^{s+1}}{s}\left( \left\|f\right\|_{L^{\infty}}\left\|g\right\|_{\dot{B}_{p,r}^{s}}+\left\|g\right\|_{L^{\infty}}\left\|f\right\|_{\dot{B}_{p,r}^{s}}\right).
\end{align}
\item Assume $s_1,s_2\leqslant\frac{d}{p}$ and $s_1+s_2>d {\rm max}(0,\frac{2}{p}-1)$, then we have the following product estimate:
\begin{align}\label{product est}
\|uv\|_{\dot{B}_{p,1}^{s_1+s_2-\frac{d}{p}}}\lesssim \|u\|_{\dot{B}_{p,1}^{s_1}}\|u\|_{\dot{B}_{p,1}^{s_2}}.
\end{align}
\item For any $1\leqslant p,q,r\leqslant\infty$ with $\frac{1}{p}+\frac{1}{q}=\frac{1}{r}$, we have the following commutator estimate:
\begin{align}\label{commu estimate1}
\|[\dot{\Delta}_{j},a]b\|_{L^r}\lesssim 2^{-j}\|\nabla a\|_{L^p}\| b\|_{L^q},
\end{align}
where $[\dot{\Delta}_{j},a]b:=\dot{\Delta}_{j}(ab)-a\dot{\Delta}_{j}b$.
\end{enumerate}
\end{Proposition}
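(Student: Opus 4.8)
\emph{Strategy.} The plan is to derive all seven statements directly from the Littlewood--Paley decomposition, using two workhorses: Bernstein's inequalities for the lower items and Bony's paraproduct decomposition for the product-type estimates. I would first record Bernstein's inequalities, which underlie most of the list: if $\mathrm{supp}\,\widehat{f}\subset 2^j\mathcal{C}$ then $\|D^k f\|_{L^p}\sim 2^{jk}\|f\|_{L^p}$, while if $\mathrm{supp}\,\widehat{f}\subset 2^j B(0,\tfrac43)$ then $\|f\|_{L^{p_2}}\lesssim 2^{jd(\frac1{p_1}-\frac1{p_2})}\|f\|_{L^{p_1}}$ for $p_1\le p_2$. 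Both come from writing $f=(\mathcal{F}^{-1}\widetilde\varphi_j)\ast f$ with $\widetilde\varphi\equiv 1$ on the relevant support, computing the $L^1$ norm of the rescaled kernel by the change of variables $\xi\mapsto 2^j\xi$, and applying Young's convolution inequality. Granting these, item (4) is Young's inequality together with the scaling identity $\|\mathcal{F}^{-1}\varphi_j\|_{L^1}=\|\mathcal{F}^{-1}\varphi\|_{L^1}$; item (1) follows by multiplying Bernstein's derivative estimate by $2^{js}$ and summing in $\ell^r$; item (3) combines the Bernstein gain in $p$ with the elementary inclusion $\ell^{r_1}\hookrightarrow\ell^{r_2}$; and item (2) needs only the identity $2^{j(\theta s_1+(1-\theta)s_2)}\|\dot\Delta_j f\|_{L^p}=(2^{js_1}\|\dot\Delta_j f\|_{L^p})^{\theta}(2^{js_2}\|\dot\Delta_j f\|_{L^p})^{1-\theta}$ followed by Hölder's inequality on the index $j$ with exponents $\tfrac1\theta,\tfrac1{1-\theta}$.

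\emph{Algebra property.} The substance lies in items (5)--(7). Here I would introduce Bony's decomposition $fg=T_fg+T_gf+R(f,g)$, with paraproduct $T_fg=\sum_j \dot S_{j-1}f\,\dot\Delta_j g$ and remainder $R(f,g)=\sum_{|j-j'|\le1}\dot\Delta_j f\,\dot\Delta_{j'}g$. For (5) I would bound the paraproduct using $\|\dot S_{j-1}f\|_{L^\infty}\lesssim\|f\|_{L^\infty}$ together with the almost-orthogonality of the frequency annuli, and the remainder using the Bernstein gain to compensate the loss of spectral localization; summing the resulting geometric series in the spectral shift produces the constant $C^{s+1}/s$, the factor $\tfrac1s$ being exactly $\sum_{k\ge0}2^{-ks}=\tfrac1{1-2^{-s}}\sim\tfrac1s$.

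\emph{Product estimate.} Item (6) follows the same template but with both factors measured in Besov norms. The role of the hypotheses $s_1,s_2\le\frac dp$ and $s_1+s_2>d\max(0,\frac2p-1)$ is precisely to guarantee that each of the three Bony pieces lands in $\dot B^{s_1+s_2-\frac dp}_{p,1}$ with a summable index series; the delicate piece is the remainder $R(u,v)$, where after the Bernstein gain one is left with sums of the form $\sum_j 2^{j(s_1+s_2)}$ localized to low frequencies, and the condition $s_1+s_2>0$ (encoded in the max-hypothesis) is what renders them convergent. The paraproducts are then controlled by the endpoint constraints $s_1,s_2\le\frac dp$, which keep the $L^\infty$-type factors in hand.

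\emph{Commutator estimate and main obstacle.} For (7) I would write $[\dot\Delta_j,a]b(x)=\int h_j(x-y)\bigl(a(y)-a(x)\bigr)b(y)\,dy$ with $h_j=\mathcal{F}^{-1}\varphi_j$, insert the mean-value representation $a(y)-a(x)=(y-x)\cdot\int_0^1\nabla a\bigl(x+\tau(y-x)\bigr)\,d\tau$, and estimate by Hölder together with Young's inequality; the prefactor $2^{-j}$ emerges from the scaling identity $\int|x|\,|h_j(x)|\,dx=2^{-j}\int|x|\,|h(x)|\,dx$, while the balance $\frac1p+\frac1q=\frac1r$ is forced by the convolution structure. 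The main obstacle throughout (5)--(7) is not any single inequality but the careful bookkeeping of Fourier supports, so that the nearly-orthogonal dyadic pieces may be resummed without incurring logarithmic losses; this step is routine but error-prone, and is where I would concentrate the most care.
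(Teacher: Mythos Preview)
Your sketch is correct and follows the standard textbook route (Bernstein for items (1)--(4), Bony's paraproduct decomposition for (5)--(6), and the mean-value kernel representation for (7)). The paper, however, does not supply any proof of this proposition: it simply states these facts and refers the reader to Bahouri--Chemin--Danchin, \emph{Fourier Analysis and Nonlinear Partial Differential Equations}. Your argument is precisely the one carried out in that reference, so there is no discrepancy in method---only in level of detail, the paper having chosen to omit the proof entirely.
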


The computation of Theorem \ref{Thm1.1} relies on the regularity estimates of the heat equation stated in the following lemma, which can be founded in \cite{BCD}.
\begin{Lemma}\label{Lemma1}
Let $w_0\in\mathcal{S}'(\mathbb{R}^d)$ and $f\in L^1_{loc}(\mathbb{R}^+;\mathcal{S}'(\mathbb{R}^d))$, the heat equation
\begin{equation*}
\begin{cases}
\partial_{t}w-\Delta w+\nabla p=f,\\
w|_{t=0}=w_{0},
\end{cases}
\end{equation*}
has a unique tempered distribution solution, given by the Duhamel formula:
$$w(t)=e^{t\Delta}w_0+\int_0^te^{(t-\tau)\Delta}f(\tau)d\tau.$$
Moreover, for any $T>0$, $s\in\mathbb{R}$, $1\leqslant r\leqslant q\leqslant m\leqslant\infty$, suppose that $w_0\in\dot{B}_{p,q}^{s+2}$ and $f\in L^r_T\dot{B}_{p,q}^{s+\frac{2}{r}}$, then $w\in L^m_T\dot{B}_{p,q}^{s+2+\frac{2}{m}}$ with the following standard estimate holds:
$$\|w\|_{L^m_T\dot{B}_{p,q}^{s+2+\frac{2}{m}}}\lesssim \|w_0\|_{\dot{B}_{p,q}^{s+2}}+\|f\|_{L^r_T\dot{B}_{p,q}^{s+\frac{2}{r}}}.$$
Furthermore, if $q$ is finite, $w\in \mathcal{C}([0,T];\dot{B}_{p,q}^{s+2})$.
\end{Lemma}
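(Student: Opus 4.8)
The plan is to reduce the statement to a frequency-by-frequency analysis of the heat flow and then reassemble the dyadic pieces via Young's inequality in time together with Minkowski's inequality in the dyadic index; the admissibility of these interchanges is exactly what the chain $1\le r\le q\le m\le\infty$ encodes. I would first settle existence, uniqueness and the Duhamel representation on the Fourier side: frequency by frequency the equation becomes the scalar ODE $\partial_t\hat w(t,\xi)+|\xi|^2\hat w(t,\xi)=\hat f(t,\xi)$ (the gradient term contributes nothing to the representation below and may be absorbed into $f$ or taken to vanish), whose unique solution with datum $\hat w_0(\xi)$ is $\hat w(t,\xi)=e^{-t|\xi|^2}\hat w_0(\xi)+\int_0^t e^{-(t-\tau)|\xi|^2}\hat f(\tau,\xi)\,d\tau$. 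Inverting the Fourier transform gives precisely the stated Duhamel formula, and uniqueness in $\mathcal S'$ is immediate since the ODE is uniquely solvable for each fixed $\xi$.

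The analytic heart is the frequency-localized smoothing of the heat semigroup: there exist constants $c,C>0$ such that whenever $\hat v$ is supported in an annulus $\{|\xi|\sim 2^j\}$ one has $\|e^{t\Delta}v\|_{L^p}\le C e^{-c2^{2j}t}\|v\|_{L^p}$, uniformly in $p$, $j$ and $t\ge0$. I would derive this from the explicit multiplier: on the annulus $e^{-t|\xi|^2}=e^{-c2^{2j}t}\,\theta(t,\xi)$ where, after the rescaling $\xi=2^j\eta$, the inverse Fourier transform of $\theta$ has $L^1$ norm bounded independently of $j$ and $t$, so that Young's convolution inequality yields the claim. Applying $\dot\Delta_j$ to the Duhamel formula, using that $\dot\Delta_j$ commutes with $e^{t\Delta}$ and invoking this decay, I obtain the pointwise-in-time bound
\begin{align*}
\|\dot\Delta_j w(t)\|_{L^p}\lesssim e^{-c2^{2j}t}\|\dot\Delta_j w_0\|_{L^p}+\int_0^t e^{-c2^{2j}(t-\tau)}\|\dot\Delta_j f(\tau)\|_{L^p}\,d\tau.
\end{align*}

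Next I would take the $L^m$ norm in $t$ of $2^{j(s+2+2/m)}\|\dot\Delta_j w(t)\|_{L^p}$. For the homogeneous part, $\|e^{-c2^{2j}t}\|_{L^m(\mathbb R^+)}\sim 2^{-2j/m}$, so $2^{j(s+2+2/m)}2^{-2j/m}=2^{j(s+2)}$ reproduces the $\dot B^{s+2}_{p,q}$ weight of $w_0$. For the Duhamel part, the integral is a convolution in time with the kernel $t\mapsto e^{-c2^{2j}t}$, whose $L^a$ norm is $\sim 2^{-2j/a}$ with $1/a=1+1/m-1/r$; here $r\le m$ is exactly what keeps $a\in[1,\infty]$, and Young's inequality in time produces the weight $2^{j(s+2+2/m)}2^{-2j/a}=2^{j(s+2/r)}$, matching $f$ in $\dot B^{s+2/r}_{p,q}$. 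Finally, to pass from these $j$-by-$j$ time estimates to the Besov-valued norms I would use Minkowski's inequality twice: since $q\le m$, the left-hand side obeys $\|\,\|\cdot\|_{\ell^q_j}\,\|_{L^m_T}\le\|\,\|\cdot\|_{L^m_T}\,\|_{\ell^q_j}$, and since $r\le q$ the forcing term obeys $\|\,\|\cdot\|_{L^r_T}\,\|_{\ell^q_j}\le\|\,\|\cdot\|_{\ell^q_j}\,\|_{L^r_T}=\|f\|_{L^r_T\dot B^{s+2/r}_{p,q}}$. Combining the three displays yields the asserted estimate. For finite $q$, the continuity $w\in\mathcal C([0,T];\dot B^{s+2}_{p,q})$ follows because each $\dot\Delta_j w$ is continuous in $t$ with values in $L^p$ while the series $\sum_j\dot\Delta_j w$ converges in $\dot B^{s+2}_{p,q}$ uniformly on $[0,T]$, the tail being dominated by the convergent $\ell^q$ sum.

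I expect the main obstacle to be the frequency-localized exponential decay estimate for $e^{t\Delta}$, since every subsequent step is bookkeeping built on top of it; the second, more routine but still delicate, point is to track the three exponents $r,q,m$ correctly through Young's inequality in time and the two Minkowski interchanges, where the chain $r\le q\le m$ is consumed exactly once at each interchange.
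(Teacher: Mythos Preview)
The paper does not prove this lemma at all: it is quoted verbatim as a known regularity estimate for the heat equation and attributed to the reference \cite{BCD} (Bahouri--Chemin--Danchin). Your proposal supplies precisely the standard argument one finds there: the frequency-localized exponential decay $\|e^{t\Delta}\dot\Delta_j v\|_{L^p}\lesssim e^{-c2^{2j}t}\|\dot\Delta_j v\|_{L^p}$, followed by Young's inequality in time on the Duhamel convolution and two Minkowski interchanges governed by $r\le q\le m$. This is correct and is exactly the approach of the cited source, so there is nothing to compare.
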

The following lemma will enable us to prove Theorem \ref{Thm1.4} and see \cite{D} for more details.
\begin{Lemma}\label{Lemma2}
 Let $1<p<\infty$, $supp\hat{u}\subset C(0,R_1,R_2)$ (with $0<R_1<R_2$). There exists a constant $c_0$ depending on $R_2, R_1$ such that
 \begin{align}\label{new Bernstein ineq}
 c_0\frac{R_1^2}{p^2}\int_{\mathbb{R}^3}|u|^pdx\leqslant-\frac{1}{p-1}\int_{\mathbb{R}^3}\Delta u|u|^{p-2}udx.
 \end{align}
\end{Lemma}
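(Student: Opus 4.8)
The plan is to exploit the fact that the spectral gap $\{|\xi|\ge R_1\}$ forces the heat semigroup $e^{t\Delta}$ to contract $u$ in $L^p$ at an exponential rate $\sim e^{-cR_1^2 t}$, and to convert this contraction into the asserted coercivity of the dissipation. First I would rewrite the right-hand side by integrating by parts: since $\nabla(|u|^{p-2}u)=(p-1)|u|^{p-2}\nabla u$, one has
\[
-\frac{1}{p-1}\int_{\mathbb R^3}\Delta u\,|u|^{p-2}u\,dx=\int_{\mathbb R^3}|u|^{p-2}|\nabla u|^2\,dx=:I,
\]
the integration by parts being legitimate because a frequency-localized $u$ is smooth (for $1<p<2$ one first regularizes the weight). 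Thus the claim reduces to the coercivity estimate $I\ge c_0\frac{R_1^2}{p^2}\|u\|_{L^p}^p$.

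The key input is the localized heat decay. Writing $u=\Phi(D)u$ with $\Phi$ a smooth cut-off equal to $1$ on the annulus and supported in $\{|\xi|\ge R_1/2\}$, the kernel of $e^{t\Delta}$ acting on $u$ is $\mathcal F^{-1}(\Phi e^{-t|\cdot|^2})$, whose $L^1$ norm is controlled independently of $p$, so that
\[
\|e^{t\Delta}u\|_{L^p}\le C_0\, e^{-c_1 R_1^2 t}\|u\|_{L^p},
\]
with $C_0,c_1>0$ depending only on $R_1,R_2$ and \emph{independent of $p$}. I then fix $t_*=\frac{\ln(2C_0)}{c_1 R_1^2}\sim R_1^{-2}$, for which $\|e^{t_*\Delta}u\|_{L^p}\le\frac12\|u\|_{L^p}$. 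It is essential to work at this fixed positive $t_*$ rather than to differentiate the decay at $t=0$: the lossy constant $C_0>1$ destroys any pointwise-in-time lower bound on $-\tfrac{d}{dt}\|e^{t\Delta}u\|_{L^p}^p\big|_{t=0}=p(p-1)I$.

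Next I would pair $u-e^{t_*\Delta}u$ with $|u|^{p-2}u$. On one hand, Hölder's inequality together with the decay gives
\[
\Big\langle u-e^{t_*\Delta}u,\,|u|^{p-2}u\Big\rangle=\|u\|_{L^p}^p-\big\langle e^{t_*\Delta}u,|u|^{p-2}u\big\rangle\ge\|u\|_{L^p}^p-\|e^{t_*\Delta}u\|_{L^p}\|u\|_{L^p}^{p-1}\ge\tfrac12\|u\|_{L^p}^p.
\]
On the other hand, writing $u-e^{t_*\Delta}u=-\Delta\tilde u$ with $\tilde u=N(D)u$, $N(\xi)=|\xi|^{-2}(1-e^{-t_*|\xi|^2})$ (well defined since $|\xi|\ge R_1$ on $\mathrm{supp}\,\hat u$), integration by parts and Cauchy--Schwarz in the weight $|u|^{p-2}$ yield
\[
\big\langle-\Delta\tilde u,|u|^{p-2}u\big\rangle=(p-1)\int_{\mathbb R^3}|u|^{p-2}\nabla\tilde u\cdot\nabla u\,dx\le(p-1)\,\mathcal J^{1/2}I^{1/2},\qquad \mathcal J:=\int_{\mathbb R^3}|u|^{p-2}|\nabla\tilde u|^2\,dx.
\]

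It remains to bound the weighted cross term $\mathcal J$, which I expect to be the main obstacle. For $p\ge2$ I would peel off the weight by Hölder, $\mathcal J\le\|u\|_{L^p}^{p-2}\|\nabla\tilde u\|_{L^p}^2$, and then estimate $\nabla\tilde u=N(D)\nabla u$ by combining a $p$-uniform multiplier bound for $N(D)$ (of order $R_1^{-2}$, again via the $L^1$ norm of its localized kernel) with the Bernstein inequality $\|\nabla u\|_{L^p}\lesssim R_2\|u\|_{L^p}$, giving $\|\nabla\tilde u\|_{L^p}\lesssim R_1^{-2}R_2\|u\|_{L^p}$ and hence $\mathcal J\lesssim R_1^{-4}R_2^2\|u\|_{L^p}^p$. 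Inserting this with the lower bound into the pairing and squaring yields
\[
\tfrac12\|u\|_{L^p}^p\lesssim (p-1)\,R_1^{-2}R_2\,\|u\|_{L^p}^{p/2}I^{1/2}\ \Longrightarrow\ I\gtrsim\frac{R_1^4}{R_2^2(p-1)^2}\|u\|_{L^p}^p\ge c_0\,\frac{R_1^2}{p^2}\|u\|_{L^p}^p,
\]
with $c_0\sim R_1^2/R_2^2$ depending on $R_1,R_2$, which is exactly the asserted inequality. The two delicate points that absorb the real work are: (i) keeping all constants ($C_0$, the multiplier norm of $N(D)$, the Bernstein constant) uniform in $p$, so that $(p-1)^{-2}\ge p^{-2}$ is the only $p$-dependence and the sharp power $p^{-2}$ survives; and (ii) the range $1<p<2$, where the weight $|u|^{p-2}$ is singular at the zeros of $u$ and the Hölder peeling fails, which I would handle by regularizing $|u|$ as $(|u|^2+\varepsilon)^{1/2}$ and passing to the limit, or by a dual formulation of the weighted Cauchy--Schwarz step.
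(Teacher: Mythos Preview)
The paper does not prove this lemma at all; it simply quotes the statement and refers to Danchin~\cite{D} for the proof. So there is no in-paper argument to compare against, and your attempt stands or falls on its own.

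For $p\ge 2$ your scheme is correct and rather elegant: the heat-semigroup contraction at the fixed time $t_*\sim R_1^{-2}$ produces the lower bound $\tfrac12\|u\|_{L^p}^p$ on the pairing, the weighted Cauchy--Schwarz step is clean, and the H\"older bound $\mathcal J\le\|u\|_{L^p}^{p-2}\|\nabla\tilde u\|_{L^p}^2$ combined with the $p$-uniform kernel estimates for $N(D)$ and the Bernstein inequality closes the loop with $c_0\sim R_1^2/R_2^2$. All constants are indeed uniform in $p$, and the only $p$-dependence that survives is the factor $(p-1)^{-2}\ge p^{-2}$.

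The genuine gap is the range $1<p<2$. The H\"older splitting you use for $\mathcal J$ requires the exponent pair $\bigl(\tfrac{p}{p-2},\tfrac{p}{2}\bigr)$, which is meaningless for $p<2$; this is a structural obstruction, not a technicality. Your proposed cure of replacing $|u|$ by $(|u|^2+\varepsilon)^{1/2}$ only removes the singularity of the weight --- it does \emph{not} furnish an upper bound on $\mathcal J_\varepsilon$ that survives $\varepsilon\to0$, because $|u|^{p-2}$ is large precisely where $u$ is small, while $\nabla\tilde u=N(D)\nabla u$ is a nonlocal object with no reason to vanish there. The ``dual formulation'' alternative is left unspecified. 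In fact, if your chain of inequalities did extend unchanged to $p<2$, it would yield $I\gtrsim (p-1)^{-2}\|u\|_{L^p}^p$; but a local computation near a simple zero of a band-limited $u$ (where $u\sim a\cdot(x-x_0)$) shows that $\int|u|^{p-2}|\nabla u|^2$ behaves like $(p-1)^{-1}$ as $p\to1^+$, so the $(p-1)^{-2}$ lower bound is too strong to be true. The argument therefore must genuinely change for $1<p<2$, and neither of your suggested fixes accomplishes that. Danchin's own proof (in the appendix of~\cite{D}) handles the full range by a different organization that avoids this weighted H\"older step.
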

One may refer to \cite[Lemma 1.6]{D1} for the following basic estimate.
\begin{Lemma}\label{Lemma3}
Let $s>0$, $u\in \dot{B}_{2,1}^s$ and $F\in W_{loc}^{[s]+2,\infty}(\mathbb{R}^3)$ with $F(0)=0$, then $F(u)\in \dot{B}_{2,1}^s$ with
$$\|F(u)\|_{\dot{B}_{2,1}^s}\leqslant C_0(\|u\|_{L^\infty})\|u\|_{\dot{B}_{2,1}^s},$$
where $C_0$ is a function of one variable depending only on $s$ and $F$.
\end{Lemma}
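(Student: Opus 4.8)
\section*{Proof proposal for Lemma \ref{Lemma3}}

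The plan is to prove this composition (Moser--type) estimate by a telescoping decomposition adapted to the Littlewood--Paley blocks, which reduces $F(u)$ to a sum of products of a smooth ``frozen--coefficient'' multiplier against a single dyadic block, and then to exploit the smoothness of $F$ quantitatively through Bernstein's inequality. First I would write, using $F(0)=0$ together with $\dot{S}_ju\to 0$ and $\dot{S}_ju\to u$ as $j\to-\infty$ and $j\to+\infty$ in $\mathcal{S}'_h$,
$$F(u)=\sum_{j\in\mathbb{Z}}\bigl(F(\dot{S}_{j+1}u)-F(\dot{S}_{j}u)\bigr),$$
the series converging in $\mathcal{S}'_h$. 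Since $\dot{S}_{j+1}u-\dot{S}_ju=\dot{\Delta}_ju$, the fundamental theorem of calculus gives $F(\dot{S}_{j+1}u)-F(\dot{S}_ju)=m_j\,\dot{\Delta}_ju$ with the scalar multiplier $m_j:=\int_0^1 F'\bigl(\dot{S}_ju+\tau\dot{\Delta}_ju\bigr)\,d\tau$, so that $F(u)=\sum_j m_j\,\dot{\Delta}_ju$.

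The heart of the argument is the control of $m_j$ and its derivatives. One notes that $\|\dot{S}_ju+\tau\dot{\Delta}_ju\|_{L^\infty}\lesssim\|u\|_{L^\infty}$ uniformly in $\tau\in[0,1]$, whence $\|m_j\|_{L^\infty}\leqslant\|F'\|_{L^\infty(B(0,C\|u\|_{L^\infty}))}$. For the derivatives, Bernstein's inequality gives $\|\nabla^a\dot{S}_ju\|_{L^\infty}+\|\nabla^a\dot{\Delta}_ju\|_{L^\infty}\lesssim 2^{ja}\|u\|_{L^\infty}$ for every $a\geqslant0$, and a Fa\`{a} di Bruno expansion of $\nabla^a m_j$ (which involves $F^{(l+1)}$ only for $l\leqslant a$, legitimate since $F\in W_{loc}^{[s]+2,\infty}$ and we shall need $a\leqslant[s]+1$) then yields
$$\|\nabla^a m_j\|_{L^\infty}\leqslant C_0(\|u\|_{L^\infty})\,2^{ja},\qquad 0\leqslant a\leqslant[s]+1,$$
where $C_0$ collects the relevant sup--norms of the derivatives of $F$ on $B(0,C\|u\|_{L^\infty})$. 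Combined with $\|\nabla^b\dot{\Delta}_ju\|_{L^2}\lesssim 2^{jb}\|\dot{\Delta}_ju\|_{L^2}$ and the Leibniz rule, this produces the clean scaling $\|\nabla^L(m_j\,\dot{\Delta}_ju)\|_{L^2}\lesssim C_0(\|u\|_{L^\infty})\,2^{jL}\|\dot{\Delta}_ju\|_{L^2}$ for $L=[s]+1$.

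With these bounds I would estimate $\|\dot{\Delta}_kF(u)\|_{L^2}\leqslant\sum_j\|\dot{\Delta}_k(m_j\,\dot{\Delta}_ju)\|_{L^2}$ by splitting according to the sign of $j-k$. For $j\geqslant k$ the crude bound $\|\dot{\Delta}_k(m_j\,\dot{\Delta}_ju)\|_{L^2}\lesssim\|m_j\|_{L^\infty}\|\dot{\Delta}_ju\|_{L^2}$ suffices; for $j<k$ I invoke Bernstein once more in the form $\|\dot{\Delta}_kg\|_{L^2}\lesssim 2^{-kL}\|\nabla^Lg\|_{L^2}$, which with the scaling above gives the decay $\|\dot{\Delta}_k(m_j\,\dot{\Delta}_ju)\|_{L^2}\lesssim C_0(\|u\|_{L^\infty})\,2^{(j-k)L}\|\dot{\Delta}_ju\|_{L^2}$. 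Setting $d_j:=2^{js}\|\dot{\Delta}_ju\|_{L^2}$, so that $\|(d_j)\|_{\ell^1}=\|u\|_{\dot{B}_{2,1}^s}$, multiplying by $2^{ks}$ and summing over $k$ reduces the matter to two discrete geometric convolutions with $\{d_j\}$: the part $j\geqslant k$ carries the summable kernel $2^{-(j-k)s}$ (convergent since $s>0$), and the part $j<k$ the summable kernel $2^{-(k-j)(L-s)}$ (convergent since $L=[s]+1>s$). Young's inequality for series then gives $\|F(u)\|_{\dot{B}_{2,1}^s}\leqslant C_0(\|u\|_{L^\infty})\|u\|_{\dot{B}_{2,1}^s}$.

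The main obstacle is the high--frequency regime $k>j$: the product $m_j\,\dot{\Delta}_ju$ is \emph{not} spectrally localized, since the multiplier $m_j$ carries arbitrarily high frequencies, so one cannot simply read off its Fourier support, and the required decay in $2^{k-j}$ must instead be manufactured from the smoothness of $F$. This is precisely where the full $W_{loc}^{[s]+2,\infty}$ regularity enters: it is what allows one to bound the $L=[s]+1$ derivatives of $m_j$ with the sharp scaling $2^{jL}$, which in turn produces the summable factor $2^{(j-k)(L-s)}$. A secondary point requiring care is the justification of the telescoping identity in the homogeneous framework, namely the convergence $\dot{S}_{-N}u\to0$ in $\mathcal{S}'_h$ together with the continuity of $F$, which makes $F(\dot{S}_{-N}u)\to F(0)=0$.
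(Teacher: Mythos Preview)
The paper does not actually prove this lemma: it is stated with a bare citation to \cite[Lemma~1.6]{D1} (Danchin's compressible Navier--Stokes paper). Your proposal reproduces exactly the standard proof that one finds there and in \cite[Theorem~2.61]{BCD}: Meyer's first linearization via the telescoping identity $F(u)=\sum_j\bigl(F(\dot{S}_{j+1}u)-F(\dot{S}_ju)\bigr)=\sum_j m_j\,\dot{\Delta}_ju$, followed by the Fa\`a di Bruno/Bernstein bound $\|\nabla^a m_j\|_{L^\infty}\leqslant C_0(\|u\|_{L^\infty})2^{ja}$ for $a\leqslant[s]+1$, the high/low frequency split in $k-j$, and the $\ell^1$ Young convolution. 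So there is nothing to contrast: your argument is the cited one, and it is correct.

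Two small remarks worth tightening in a final write-up. First, the lemma as stated only assumes $u\in\dot{B}_{2,1}^s$, yet the constant is $C_0(\|u\|_{L^\infty})$; your proof (like Danchin's) tacitly uses $u\in L^\infty$, which is automatic in the paper's application since $\dot{B}_{2,1}^{3/2}(\mathbb{R}^3)\hookrightarrow L^\infty$, but is an implicit extra hypothesis for general $s>0$. Second, the low-frequency endpoint of the telescoping sum, i.e.\ $F(\dot{S}_{-N}u)\to0$ in $\mathcal{S}'_h$, deserves one more line: one uses $F(0)=0$ to write $F(\dot{S}_{-N}u)=\dot{S}_{-N}u\int_0^1F'(\tau\dot{S}_{-N}u)\,d\tau$, so that $\|F(\dot{S}_{-N}u)\|_{L^2}\leqslant C_0(\|u\|_{L^\infty})\|\dot{S}_{-N}u\|_{L^2}\to0$ whenever $u\in L^2$ (or one argues via $\|\dot{S}_{-N}u\|_{L^\infty}\to0$ when $u\in\dot{B}_{2,1}^{3/2}$). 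Neither point is a gap in your strategy.
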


In order to prove the following useful lemma, let us recall the so--called Bony decomposition for the product of $u, v\in \mathcal{S}'_h$:
$$uv=T_uv+T_vu+R(u,v).$$
Above, $T$ denotes the homogeneous paraproduct operator defined as follows:
$$T_uv:=\sum\limits_{j\in \mathbb{Z}}\dot{S}_{j-1}u\dot{\Delta}_{j}v,\quad T_vu:=\sum\limits_{j\in \mathbb{Z}}\dot{S}_{j-1}v\dot{\Delta}_{j}u,$$
and $R$ is the homogeneous remainder operator given by
$$R(u,v)=\sum\limits_{|j-j'|\leqslant 1}\dot{\Delta}_{j}u\dot{\Delta}_{j'}v.$$

The following lemma is critical for justifying Theorem \ref{Thm1.1}.
\begin{Lemma}\label{Lemma4}
 Let $s>0$ and $\rho\in (2,\infty)$, then we have
 \begin{equation}\label{commu estimate}
 \sum\limits_{j\in \mathbb{Z}}2^{js}\|[\dot{\Delta}_{j},b]a\|_{L^2}\lesssim \|a\|_{\dot{B}_{2,1}^{s-\frac{2}{\rho}}}\|b\|_{\dot{B}_{\infty,\infty}^{\frac{2}{\rho}}}+\|b\|_{\dot{B}_{2,1}^{s+1-\frac{2}{\rho}}}\|a\|_{\dot{B}_{\infty,\infty}^{\frac{2}{\rho}}}.
 \end{equation}
\end{Lemma}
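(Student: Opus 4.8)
The plan is to estimate the commutator $[\dot\Delta_j,b]a = \dot\Delta_j(ba) - b\,\dot\Delta_j a$ by expanding the product $ba$ with the Bony decomposition and controlling each of the resulting pieces after applying $\dot\Delta_j$ and subtracting $b\,\dot\Delta_j a$. Writing $ba = T_b a + T_a b + R(a,b)$, we split the commutator as
\begin{align*}
[\dot\Delta_j,b]a = \bigl(\dot\Delta_j T_b a - b\,\dot\Delta_j a\bigr) + \dot\Delta_j T_a b + \dot\Delta_j R(a,b).
\end{align*}
The first group is the genuine commutator term: using that $\dot\Delta_j$ acts as convolution with $2^{3j}h(2^j\cdot)$ for a fixed Schwartz function $h$ with $\int h = \widehat h(0)$ suitably normalized, and the spectral localization of $T_b a = \sum_{k} \dot S_{k-1} b\,\dot\Delta_k a$ to frequencies near $2^k$, one reduces to the classical commutator trick: only the terms with $|k-j|\le N_0$ survive, and on each one writes
\begin{align*}
\dot\Delta_j\bigl(\dot S_{k-1}b\,\dot\Delta_k a\bigr)(x) - \dot S_{k-1}b(x)\,\dot\Delta_j\dot\Delta_k a(x) = \int 2^{3j}h(2^j y)\bigl(\dot S_{k-1}b(x-y) - \dot S_{k-1}b(x)\bigr)\dot\Delta_k a(x-y)\,dy,
\end{align*}
and estimates the increment of $\dot S_{k-1}b$ by $|y|\,\|\nabla \dot S_{k-1}b\|_{L^\infty}$. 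Since $\|\nabla \dot S_{k-1}b\|_{L^\infty}\lesssim \sum_{k'\le k-2}2^{k'}\|\dot\Delta_{k'}b\|_{L^\infty}\lesssim \sum_{k'\le k-2}2^{k'(1-2/\rho)}\|b\|_{\dot B^{2/\rho}_{\infty,\infty}}\lesssim 2^{k(1-2/\rho)}\|b\|_{\dot B^{2/\rho}_{\infty,\infty}}$ (here $1-2/\rho>0$ because $\rho>2$, so the geometric sum converges), and $\int 2^{3j}h(2^jy)|y|\,dy \lesssim 2^{-j}$, we obtain an $L^2$ bound of order $2^{-j}\cdot 2^{k(1-2/\rho)}\|b\|_{\dot B^{2/\rho}_{\infty,\infty}}\|\dot\Delta_k a\|_{L^2}$; multiplying by $2^{js}$, summing over $j$ and using $|k-j|\le N_0$ together with $s>0$ for the summation in $j$, this contributes $\lesssim \|b\|_{\dot B^{2/\rho}_{\infty,\infty}}\sum_k 2^{k(s-2/\rho)}\|\dot\Delta_k a\|_{L^2} = \|b\|_{\dot B^{2/\rho}_{\infty,\infty}}\|a\|_{\dot B^{s-2/\rho}_{2,1}}$, which is the first term on the right of \eqref{commu estimate}.

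For the second group $\dot\Delta_j T_a b = \sum_k \dot\Delta_j(\dot S_{k-1}a\,\dot\Delta_k b)$, there is no cancellation to exploit, but the operator is already a paraproduct which does not see low frequencies of $b$ in a dangerous way: the summand is spectrally supported near $2^k$, so $\dot\Delta_j$ forces $|k-j|\le N_0$ again, and we bound $\|\dot\Delta_j(\dot S_{k-1}a\,\dot\Delta_k b)\|_{L^2}\lesssim \|\dot S_{k-1}a\|_{L^\infty}\|\dot\Delta_k b\|_{L^2}\lesssim \|a\|_{\dot B^{2/\rho}_{\infty,\infty}}2^{-k\cdot 2/\rho}\cdot 2^{-k} \cdot 2^{k}\|\dot\Delta_k b\|_{L^2}$; more cleanly, $\|\dot S_{k-1}a\|_{L^\infty}\lesssim \sum_{k'\le k-2}\|\dot\Delta_{k'}a\|_{L^\infty}\lesssim \sum_{k'\le k-2}2^{-k'(2/\rho-1)}\cdots$ — wait, this diverges since $2/\rho - 1 < 0$. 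Instead one keeps $\|\dot S_{k-1}a\|_{L^\infty}\lesssim 2^{k(1-2/\rho)}\|a\|_{\dot B^{2/\rho}_{\infty,\infty}}$ by the same Bernstein-plus-geometric-sum argument as above (the sum $\sum_{k'\le k-2}2^{k'(1-2/\rho)}$ converges), so $\|\dot\Delta_j(\dot S_{k-1}a\,\dot\Delta_k b)\|_{L^2}\lesssim 2^{k(1-2/\rho)}\|a\|_{\dot B^{2/\rho}_{\infty,\infty}}\|\dot\Delta_k b\|_{L^2}$; then $\sum_j 2^{js}\|\dot\Delta_j T_a b\|_{L^2}\lesssim \|a\|_{\dot B^{2/\rho}_{\infty,\infty}}\sum_k 2^{k(s+1-2/\rho)}\|\dot\Delta_k b\|_{L^2} = \|a\|_{\dot B^{2/\rho}_{\infty,\infty}}\|b\|_{\dot B^{s+1-2/\rho}_{2,1}}$, the second term on the right of \eqref{commu estimate}. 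One also has to include the contribution of $b\,\dot\Delta_j a$ that was not absorbed into the first group — but in fact $b\,\dot\Delta_j a$ was entirely paired with $\dot\Delta_j T_b a$; the leftover pieces $\dot\Delta_j T_a b$ and $\dot\Delta_j R(a,b)$ stand alone, and a symmetric reconsideration shows no stray term of the form "$b\,\dot\Delta_j a$" remains unmatched because $T_b a$ captures exactly the portion of the product where $b$ is at low frequency relative to $a$, matching the structure of $b\,\dot\Delta_j a$ after localization.

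For the remainder group $\dot\Delta_j R(a,b) = \sum_{|k-k'|\le 1}\dot\Delta_j(\dot\Delta_k a\,\dot\Delta_{k'}b)$, the product $\dot\Delta_k a\,\dot\Delta_{k'}b$ has frequencies in a ball of radius $\sim 2^k$, so $\dot\Delta_j$ forces $k\gtrsim j - N_1$; we estimate $\|\dot\Delta_j(\dot\Delta_k a\,\dot\Delta_{k'}b)\|_{L^2}\lesssim \|\dot\Delta_k a\|_{L^2}\|\dot\Delta_{k'}b\|_{L^\infty}\lesssim \|\dot\Delta_k a\|_{L^2}\cdot 2^{-k\cdot 2/\rho}\|b\|_{\dot B^{2/\rho}_{\infty,\infty}}$, so $\sum_j 2^{js}\|\dot\Delta_j R(a,b)\|_{L^2}\lesssim \|b\|_{\dot B^{2/\rho}_{\infty,\infty}}\sum_j 2^{js}\sum_{k\gtrsim j}2^{-2k/\rho}\|\dot\Delta_k a\|_{L^2}$, and since $s>0$ the $j$-sum is dominated by its top term $j\sim k$, giving $\lesssim \|b\|_{\dot B^{2/\rho}_{\infty,\infty}}\sum_k 2^{k(s-2/\rho)}\|\dot\Delta_k a\|_{L^2} = \|b\|_{\dot B^{2/\rho}_{\infty,\infty}}\|a\|_{\dot B^{s-2/\rho}_{2,1}}$, again the first term on the right. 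Collecting the three contributions yields \eqref{commu estimate}. The main obstacle is the first (genuine commutator) group: one must be careful that the geometric series $\sum_{k'\le k}2^{k'(1-2/\rho)}$ converges, which is exactly where the hypothesis $\rho>2$ enters, and that the subsequent $j$-summation converges, which is where $s>0$ enters; the other two groups are softer and do not require the commutator structure, only Bernstein inequalities and the same two positivity constraints.
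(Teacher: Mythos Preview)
Your overall strategy---Bony decomposition plus the convolution--kernel commutator trick on the $T_b a$ piece---is exactly the paper's. But two points need repair.

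First, your three--term split $(\dot\Delta_j T_b a - b\,\dot\Delta_j a) + \dot\Delta_j T_a b + \dot\Delta_j R(a,b)$ is algebraically fine, yet in the first parenthesis you only estimate $[\dot\Delta_j,T_b]a=\dot\Delta_j T_b a - T_b\dot\Delta_j a$; the residual $(T_b-b)\dot\Delta_j a=-T_{\dot\Delta_j a}b-R(b,\dot\Delta_j a)$ is not zero, and your hand--wave that ``no stray term remains unmatched'' is incorrect. The paper avoids this by Bony--decomposing $b\,\dot\Delta_j a$ as well, arriving at the five--term identity
\[
[\dot\Delta_j,b]a=[\dot\Delta_j,T_b]a+\bigl(\dot\Delta_j T_ab-R(b,\dot\Delta_j a)\bigr)+\bigl(\dot\Delta_j R(b,a)-T_{\dot\Delta_j a}b\bigr),
\]
and estimating each group. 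The two extra pieces are in fact easy (they land in the $\|a\|_{\dot B^{s-2/\rho}_{2,1}}\|b\|_{\dot B^{2/\rho}_{\infty,\infty}}$ term), but they must be written down, not asserted away.

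Second, and this is the genuine error: the bound $\|\dot S_{k-1}a\|_{L^\infty}\lesssim 2^{k(1-2/\rho)}\|a\|_{\dot B^{2/\rho}_{\infty,\infty}}$ is \emph{false}. Your ``same Bernstein--plus--geometric--sum argument'' for $\nabla\dot S_{k-1}b$ worked precisely because the gradient supplied a factor $2^{k'}$, turning the sum into $\sum_{k'\le k}2^{k'(1-2/\rho)}$, convergent since $\rho>2$. With no gradient on $a$ the relevant sum is $\sum_{k'\le k}2^{-2k'/\rho}$, which diverges as $k'\to-\infty$. What the paper actually obtains for the $T_ab$ group is
\[
\sum_j 2^{js}\|\dot\Delta_j T_ab-R(b,\dot\Delta_j a)\|_{L^2}\lesssim \|b\|_{\dot B^{s+1-2/\rho}_{2,1}}\|a\|_{\dot B^{2/\rho-1}_{\infty,\infty}},
\]
with index $2/\rho-1$ on $a$: then $\|\dot\Delta_{k'}a\|_{L^\infty}\le 2^{k'(1-2/\rho)}\|a\|_{\dot B^{2/\rho-1}_{\infty,\infty}}$ and the geometric sum does converge. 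In other words, the lemma as printed carries a typo in the second term; the version with $\|a\|_{\dot B^{2/\rho-1}_{\infty,\infty}}$ is what is proved and what is actually used downstream (there $a=\nabla\tilde u$, so $\|\nabla\tilde u\|_{\dot B^{2/\rho-1}_{\infty,\infty}}\sim\|\tilde u\|_{\dot B^{2/\rho}_{\infty,\infty}}$ and the discrepancy disappears).
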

\begin{proof}
Employing the Bony decomposition, we have
\begin{align}\label{Bony decomposition}
[\dot{\Delta}_{j},b]a=[\dot{\Delta}_{j},T_b]a+\dot{\Delta}_{j}T_ab-R(b,\dot{\Delta}_{j}a)+\dot{\Delta}_{j}R(b,a)-T_{\dot{\Delta}_{j}a}b.
\end{align}
Using Bernstein's inequality and Young's inequality, one has from \eqref{commu estimate1}
\begin{align*}
\sum\limits_{j\in \mathbb{Z}}2^{js}\|[\dot{\Delta}_{j},T_b]a\|_{L^2}&\leqslant \sum\limits_{j\in \mathbb{Z}}2^{js}\sum \limits_{|j-j'|\leqslant 1}\|[\dot{\Delta}_{j},\dot{S}_{j'-1}b]\dot{\Delta}_{j'}a\|_{L^2}\\
&\lesssim \sum\limits_{j\in \mathbb{Z}}2^{js}\sum \limits_{|j-j'|\leqslant 1}2^{-j}\|\nabla\dot{S}_{j'-1}b\|_{L^\infty}\|\dot{\Delta}_{j'}a\|_{L^2}\\
&\lesssim \sum\limits_{j\in \mathbb{Z}}2^{j(s-1)}\|\dot{\Delta}_{j}a\|_{L^2}\sum \limits_{j'\leqslant j}2^{j'}\|\dot{\Delta}_{j'}b\|_{L^\infty}\\
&\lesssim \sum\limits_{j\in \mathbb{Z}}2^{j(s-\frac{2}{\rho})}\|\dot{\Delta}_{j}a\|_{L^2}\sum \limits_{j'\leqslant j}2^{(j-j')(\frac{2}{\rho}-1)}2^{j'\frac{2}{\rho}}\|\dot{\Delta}_{j'}b\|_{L^\infty}\\
&\lesssim \|a\|_{\dot{B}_{2,1}^{s-\frac{2}{\rho}}}\|b\|_{\dot{B}_{\infty,\infty}^{\frac{2}{\rho}}}.
\end{align*}
A similar argument implies that
\begin{align*}
&\sum\limits_{j\in \mathbb{Z}}2^{js}\|\dot{\Delta}_{j}T_ab-R(b,\dot{\Delta}_{j}a)\|_{L^2}\\
&\leqslant \sum\limits_{j\in \mathbb{Z}}2^{js}\sum \limits_{|j-j'|\leqslant 1}\left(\|\dot{\Delta}_{j'}b\dot{S}_{j'-1}a\|_{L^2}+\sum\limits_{|\nu|\leqslant1}\|\dot{\Delta}_{j'}b\dot{\Delta}_{j'-\nu}\dot{\Delta}_{j}a\|_{L^2}\right)\\
&\lesssim \sum\limits_{j\in \mathbb{Z}}2^{js}\|\dot{\Delta}_{j}b\|_{L^2}\sum \limits_{j'\leqslant j}\|\dot{\Delta}_{j'}a\|_{L^\infty}+\sum\limits_{j\in \mathbb{Z}}2^{js}\|\dot{\Delta}_{j}b\|_{L^2}\|\dot{\Delta}_{j}a\|_{L^\infty}\\
&\lesssim \sum\limits_{j\in \mathbb{Z}}2^{j(s+1-\frac{2}{\rho})}\|\dot{\Delta}_{j}b\|_{L^2}\sum \limits_{j'\leqslant j}2^{(j-j')(\frac{2}{\rho}-1)}2^{j'(\frac{2}{\rho}-1)}\|\dot{\Delta}_{j'}a\|_{L^\infty}\\
&\quad +\sum\limits_{j\in \mathbb{Z}}2^{j(s+1-\frac{2}{\rho})}\|\dot{\Delta}_{j}b\|_{L^2}2^{j(\frac{2}{\rho}-1)}\|\dot{\Delta}_{j}a\|_{L^\infty}\\
&\lesssim\|b\|_{\dot{B}_{2,1}^{s+1-\frac{2}{\rho}}}\|a\|_{\dot{B}_{\infty,\infty}^{\frac{2}{\rho}-1}},
\end{align*}
and
\begin{align*}
&\sum\limits_{j\in \mathbb{Z}}2^{js}\|\dot{\Delta}_{j}R(b,a)-T_{\dot{\Delta}_{j}a}b\|_{L^2}\\
&\leqslant \sum\limits_{j\in \mathbb{Z}}2^{js}\sum \limits_{j\leqslant j'}\left(\sum\limits_{|\nu|\leqslant1}\|\dot{\Delta}_{j}\left(\dot{\Delta}_{j'}b\dot{\Delta}_{j'-\nu}a\right)\|_{L^2}+
\|\dot{\Delta}_{j'}b\dot{S}_{j'-1}\dot{\Delta}_{j}a\|_{L^2}\right)\\
&\lesssim \sum\limits_{j\in \mathbb{Z}}2^{js}\sum \limits_{j\leqslant j'}\|\dot{\Delta}_{j'}a\|_{L^2}\|\dot{\Delta}_{j'}b\|_{L^\infty}+\sum\limits_{j\in \mathbb{Z}}2^{js}\|\dot{\Delta}_{j}a\|_{L^2}\sum \limits_{j\leqslant j'}\|\dot{\Delta}_{j'}b\|_{L^\infty}\\
&\lesssim \sum\limits_{j\in \mathbb{Z}}\sum \limits_{j\leqslant j'}2^{(j-j')s}2^{j'(s-\frac{2}{\rho})}\|\dot{\Delta}_{j'}a\|_{L^2}2^{j'\frac{2}{\rho}}\|\dot{\Delta}_{j'}b\|_{L^\infty}\\
&\quad +\sum\limits_{j\in \mathbb{Z}}2^{j(s-\frac{2}{\rho})}\|\dot{\Delta}_{j}a\|_{L^2}\sum \limits_{j\leqslant j'}2^{(j-j')\frac{2}{\rho}}2^{j'\frac{2}{\rho}}\|\dot{\Delta}_{j'}b\|_{L^\infty}\\
&\lesssim \|a\|_{\dot{B}_{2,1}^{s-\frac{2}{\rho}}}\|b\|_{\dot{B}_{\infty,\infty}^{\frac{2}{\rho}}}.
\end{align*}
Combining these above inequalities with \eqref{Bony decomposition}, we conclude the results of Lemma \ref{Lemma4}.
\end{proof}

\section{Proof of the main result}\label{sec3}
In this section, we prove the main results step by step. More precisely, we prove Theorem \ref{Thm1.1} by using the energy method and the compactness argument. Based on this local results, we prove the global existence provide that the initial is small enough in some suitable space. Later, we establish a blow--up criterion in Theorem \ref{Thm1.3}. Finally, using the blow--up criterion, we obtain the global existence of Theorem \ref{Thm1.4} in Sobolev space.
\subsection{Proof of Theorem \ref{Thm1.1}}\label{sec3.1}
Inspired by \cite[Chapter 10]{BCD} and \cite{DT}, in this subsection, basing on a priori estimate in $E_2(T)$, we prove Theorem \ref{Thm1.1} through a routine compactness argument. We get this a priori estimate in the first step by running energy method on the smooth solutions of the LLB equation after localization in the Fourier space. After that, the uniform regularity of the approximate solutions ensures passing the limit of the nonlinear terms.

\underline{First step: A priori estimates.} The main goal of this part is to prove the following proposition.
\begin{Proposition}\label{proposition3.1}
Suppose $u$ is a smooth solution of the Cauchy problem \eqref{LLB} on $[0,T]\times \mathbb{R}^3$ for some $T>0$. Let $u^L:=e^{t\Delta}u_0$ and $\tilde{u}:=u-u^L$. Define
\begin{align}\label{def1}
\varphi(t)=\|u^L\|_{\dot{B}_{2,1}^{\frac{3}{2}}}+\|u^L\|_{\dot{B}_{2,1}^{\frac{3}{2}}}\|u^L\|_{\dot{B}_{2,1}^{\frac{7}{2}}},
\end{align}
\begin{align}\label{def2}
\psi(t)=\|u^L\|^2_{\dot{B}_{2,1}^{\frac{5}{2}}}+\|u^L\|_{\dot{B}_{2,1}^{\frac{7}{2}}}+\|u^L\|^{\rho}_{\dot{B}_{2,1}^{\frac{2}{\rho}+\frac{3}{2}}}+\|u^L\|^{\frac{\rho}{\rho-1}}
_{\dot{B}_{2,1}^{\frac{7}{2}-\frac{3}{2}}}+1.
\end{align}
Then there exist positive constants $\varepsilon$, $C$ and $C_1$ such that if
\begin{align}\label{condition1}
\int_0^T\varphi(t)e^{C\int_t^T\psi(s)ds}dt< \varepsilon,
\end{align}
we have
\begin{align}
\|\tilde{u}\|_{L^\infty_T\dot{B}_{2,1}^{\frac{3}{2}}}+\int_0^T\frac{C_1}{4}\|\tilde{u}\|_{\dot{B}_{2,1}^{\frac{7}{2}}}+\kappa\|\tilde{u}\|_{\dot{B}_{2,1}^{\frac{3}{2}}}dt\leqslant C\varepsilon,
\end{align}
and
\begin{align}
\|u\|_{L^\infty_T\dot{B}_{2,1}^{\frac{3}{2}}}+\int_0^T\frac{C_1}{4}\|u\|_{\dot{B}_{2,1}^{\frac{7}{2}}}+\kappa\|u\|_{\dot{B}_{2,1}^{\frac{3}{2}}}dt\leqslant C\left(\|u_0\|_{\dot{B}_{2,1}^{\frac{3}{2}}}+\varepsilon\right).
\end{align}
\end{Proposition}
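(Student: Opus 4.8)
The plan is to derive an a priori estimate for $\tilde u = u - u^L$ by applying the dyadic block $\dot\Delta_j$ to the equation satisfied by $\tilde u$, testing against $\dot\Delta_j \tilde u$ in $L^2$, and summing with the weight $2^{\frac{3}{2}j}$. First I would write down the equation for $\tilde u$: since $u = u^L + \tilde u$ and $\partial_t u^L = \Delta u^L$, we get
\begin{align*}
\partial_t \tilde u - \Delta \tilde u = -\kappa u + u\times\Delta u - \kappa\mu|u|^2 u,
\end{align*}
with $\tilde u|_{t=0}=0$. Applying $\dot\Delta_j$, multiplying by $\dot\Delta_j\tilde u$, integrating in $x$, and using the Bernstein inequality $\|\nabla \dot\Delta_j \tilde u\|_{L^2}^2 \gtrsim 2^{2j}\|\dot\Delta_j\tilde u\|_{L^2}^2$ to produce the dissipative gain, one obtains after dividing by $\|\dot\Delta_j\tilde u\|_{L^2}$ and integrating in time a bound of the form
\begin{align*}
\|\dot\Delta_j\tilde u(t)\|_{L^2} + c\,2^{2j}\!\int_0^t\!\|\dot\Delta_j\tilde u\|_{L^2} + \kappa\!\int_0^t\!\|\dot\Delta_j\tilde u\|_{L^2} \lesssim \int_0^t\! \big( \kappa\|\dot\Delta_j u\|_{L^2} + \|\dot\Delta_j(u\times\Delta u)\|_{L^2} + \kappa\mu\|\dot\Delta_j(|u|^2u)\|_{L^2}\big).
\end{align*}
Multiplying by $2^{\frac{3}{2}j}$ and summing over $j\in\mathbb{Z}$ then yields the left-hand side of the Proposition (with $C_1$ coming from the Bernstein constant), so everything reduces to estimating the three nonlinear contributions in $L^1_T\dot B_{2,1}^{3/2}$.

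The routine terms are $\kappa u$ and $\kappa\mu|u|^2 u$: the first splits as $u^L + \tilde u$ and feeds the $\varphi$ and Grönwall terms respectively; the cubic term is handled by the algebra/product estimates \eqref{algebra}, \eqref{product est} and, for the $u^L$-only pieces, absorbed into $\varphi,\psi$ — splitting $u = u^L+\tilde u$ and distributing, each monomial either is pure $u^L$ (goes into $\varphi$ or $\psi$) or carries at least one factor of $\tilde u$ (goes into the Grönwall term, after interpolating $\dot B_{2,1}^{3/2}$ between the controlled norms). The genuinely delicate term is $u\times\Delta u$. Writing $u\times\Delta u = \nabla\cdot(u\times\nabla u)$ so that $\dot\Delta_j(u\times\Delta u)$ has a factor $2^j$ extracted, the product $\dot\Delta_j(u\times\nabla u)$ is where the naive product estimate fails: the piece $u^L\times\nabla\tilde u$ produces $\|u^L\|_{\dot B_{2,1}^{3/2}}\|\tilde u\|_{\dot B_{2,1}^{7/2}}$, which cannot be absorbed by the dissipation for small $T$ because $\|u^L\|_{\dot B_{2,1}^{3/2}}$ does not go to zero as $T\to 0$. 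This is the main obstacle, and it is exactly what Lemma \ref{Lemma4} is designed to resolve: I would write $u^L\times\nabla\tilde u = [\dot\Delta_j, u^L]\nabla\tilde u + u^L\dot\Delta_j\nabla\tilde u$ (componentwise, with the cross product), move the commutator to the left using Lemma \ref{Lemma4} with $a=\nabla\tilde u$, $b=u^L$, $s=\frac12$, $\rho\in(2,\infty)$, so that the dangerous term becomes $\|\nabla\tilde u\|_{\dot B_{2,1}^{1/2 - 2/\rho}}\|u^L\|_{\dot B_{\infty,\infty}^{2/\rho}} + \|u^L\|_{\dot B_{2,1}^{3/2 - 2/\rho}}\|\nabla\tilde u\|_{\dot B_{\infty,\infty}^{2/\rho}}$; by the embedding \eqref{embedding} these are controlled by $\psi(t)$ times $\|\tilde u\|$ in an intermediate norm obtained by interpolation \eqref{interpolation} between $\dot B_{2,1}^{3/2}$ and $\dot B_{2,1}^{7/2}$, where the $\dot B_{2,1}^{7/2}$ part carries a fractional power $<1$ and is absorbed by Young's inequality into a small fraction of the dissipation (leaving $\frac{C_1}{4}$), and the residual $\|\tilde u\|_{\dot B_{2,1}^{3/2}}$ part is multiplied by $\psi(t)$. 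The remaining pieces of $u\times\nabla u$ — namely $\tilde u\times\nabla u^L$, $\tilde u\times\nabla\tilde u$, and $u^L\times\nabla u^L$ — are either absorbed into $\varphi$/$\psi$ (the pure $u^L$ term) or handled by the ordinary product estimate \eqref{product est} since at least one $\tilde u$ factor is present and $\|\tilde u\|$ is small.

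Collecting all contributions, the summed estimate takes the Grönwall-type form
\begin{align*}
X(t) \leqslant C\int_0^t \varphi(s)\,ds + C\int_0^t \psi(s)X(s)\,ds + (\text{superlinear terms in } X),
\end{align*}
where $X(t) := \|\tilde u\|_{L^\infty_t\dot B_{2,1}^{3/2}} + \frac{C_1}{4}\|\tilde u\|_{L^1_t\dot B_{2,1}^{7/2}} + \kappa\|\tilde u\|_{L^1_t\dot B_{2,1}^{3/2}}$. Applying Grönwall's lemma to the linear part gives $X(T)\lesssim \int_0^T\varphi(t)e^{C\int_t^T\psi(s)ds}dt$, which is $<\varepsilon$ by hypothesis \eqref{condition1}; a standard continuity/bootstrap argument then shows the superlinear terms do not spoil this as long as $\varepsilon$ is chosen small, yielding $\|\tilde u\|_{L^\infty_T\dot B_{2,1}^{3/2}} + \int_0^T\frac{C_1}{4}\|\tilde u\|_{\dot B_{2,1}^{7/2}} + \kappa\|\tilde u\|_{\dot B_{2,1}^{3/2}}dt \leqslant C\varepsilon$. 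Finally, the estimate for $u$ itself follows by the triangle inequality $u = u^L + \tilde u$ together with the heat estimate of Lemma \ref{Lemma1}, which gives $\|u^L\|_{L^\infty_T\dot B_{2,1}^{3/2}} + \|u^L\|_{L^1_T\dot B_{2,1}^{7/2}} \lesssim \|u_0\|_{\dot B_{2,1}^{3/2}}$ (and the $L^1_T\dot B_{2,1}^{3/2}$ norm of $u^L$ is likewise controlled, being part of $\int_0^T\varphi\,dt$), completing the proof of Proposition \ref{proposition3.1}.
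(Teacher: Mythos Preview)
Your overall outline is right, but there is a genuine gap in how you handle the quadratic term $u\times\Delta u$, and it is precisely the point the whole proposition turns on. In your displayed inequality you have already applied Cauchy--Schwarz to obtain $\|\dot\Delta_j(u\times\Delta u)\|_{L^2}$ on the right, and only afterwards do you decompose $\dot\Delta_j(u^L\times\nabla\tilde u)=[\dot\Delta_j,u^L\times]\nabla\tilde u+u^L\times\dot\Delta_j\nabla\tilde u$. You then estimate the commutator via Lemma~\ref{Lemma4}, but you say nothing about the residual piece $u^L\times\dot\Delta_j\nabla\tilde u$. In norm this piece is $\|u^L\times\dot\Delta_j\nabla\tilde u\|_{L^2}\lesssim\|u^L\|_{L^\infty}\|\dot\Delta_j\nabla\tilde u\|_{L^2}$, which after extracting $2^j$ and summing with weight $2^{3j/2}$ gives back exactly the forbidden term $\|u^L\|_{\dot B_{2,1}^{3/2}}\|\tilde u\|_{\dot B_{2,1}^{7/2}}$ you set out to avoid. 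The commutator decomposition buys you nothing once the inner product has been discarded.

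The fix, and what the paper actually does, is to keep the inner-product structure for the $\nabla\tilde u$ part: integrate by parts \emph{before} Cauchy--Schwarz to obtain $-(\dot\Delta_j(u\times\nabla\tilde u),\dot\Delta_j\nabla\tilde u)$, then write this as $-([\dot\Delta_j,u\times]\nabla\tilde u,\dot\Delta_j\nabla\tilde u)-(u\times\dot\Delta_j\nabla\tilde u,\dot\Delta_j\nabla\tilde u)$. The second term vanishes identically by the cross-product orthogonality $(a\times v)\cdot v=0$; only now do you apply Cauchy--Schwarz to the commutator, yielding $2^j\|[\dot\Delta_j,u\times]\nabla\tilde u\|_{L^2}\|\dot\Delta_j\tilde u\|_{L^2}$, and after dividing and summing you get $\sum_j 2^{5j/2}\|[\dot\Delta_j,u\times]\nabla\tilde u\|_{L^2}$, to which Lemma~\ref{Lemma4} applies with $s=\tfrac52$ (not $s=\tfrac12$ as you wrote), $a=\nabla\tilde u$, $b=u$. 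Note also that the paper commutes with the full $u$ rather than $u^L$; this packages $u^L\times\nabla\tilde u$ and $\tilde u\times\nabla\tilde u$ together and delivers both the $\psi$-coefficient terms and the single superlinear term $\|\tilde u\|_{\dot B_{2,1}^{3/2}}\|\tilde u\|_{\dot B_{2,1}^{7/2}}$ that the bootstrap handles. Your variant with $b=u^L$ would also work once the cancellation is restored, but you must then treat $\tilde u\times\nabla\tilde u$ separately (either by the same commutator trick or by a direct product estimate followed by bootstrap).
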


\begin{proof}
In view of Lemma \ref{Lemma1}, we have
\begin{align*}
\|u^L\|_{L^\infty_T\dot{B}_{2,1}^{\frac{3}{2}}}+\|u^L\|_{L^1_T\dot{B}_{2,1}^{\frac{7}{2}}}\lesssim \|u_0\|_{\dot{B}_{2,1}^{\frac{3}{2}}}.
\end{align*}
From the equation \eqref{LLB}, we know that $\tilde{u}$ satisfies the following equation:
\begin{equation}\label{LLB1}
\begin{cases}
\partial_t\tilde{u}=\Delta \tilde{u}-\kappa u+u\times \Delta u-\kappa\mu|u|^2u,\\
\tilde{u}|_{t=0}=0.
\end{cases}
\end{equation}
Applying $\dot{\Delta}_j$ on \eqref{LLB1}, and then taking inner products with $\dot{\Delta}_j\tilde{u}$, one gets from integrating by parts
\begin{align}\label{est2}
&\frac{1}{2}\frac{d}{dt}\|\dot{\Delta}_j\tilde{u}\|_{L^2}^2+C_12^{2j}\|\dot{\Delta}_j\tilde{u}\|_{L^2}^2+\kappa\|\dot{\Delta}_j\tilde{u}\|_{L^2}^2\notag\\
&=\left(-\kappa\dot{\Delta}_ju^L-\kappa\mu\dot{\Delta}_j\left(|u|^2u\right)+\dot{\Delta}_j{\rm div}\left(u\times\nabla u^L\right),\dot{\Delta}_j\tilde{u}\right)-\left(\dot{\Delta}_j\left(u\times\nabla \tilde{u}\right),\dot{\Delta}_j\nabla \tilde{u}\right)\notag\\
&\lesssim\kappa\|\dot{\Delta}_ju^L\|_{L^2}\|\dot{\Delta}_j\tilde{u}\|_{L^2}+\kappa\mu\|\dot{\Delta}_j\left(|u|^2u\right)\|_{L^2}\|\dot{\Delta}_j\tilde{u}\|_{L^2}\notag\\
&\quad+\|\dot{\Delta}_j{\rm div}\left(u\times\nabla u^L\right)\|_{L^2}\|\dot{\Delta}_j\tilde{u}\|_{L^2}+2^{j}\|[\dot{\Delta}_j,u\times]\nabla\tilde{u}\|_{L^2}\|\dot{\Delta}_j\tilde{u}\|_{L^2},
\end{align}
for some $C_1>0$.
Letting $\delta>0$ be a constant and $h_j^2:=\|\dot{\Delta}_j\tilde{u}\|_{L^2}^2+\delta^2$, \eqref{est2} can be recast as
\begin{align*}
&\frac{d}{dt}h_j+C_12^{2j}\frac{h_j^2-\delta^2}{h_j}+\kappa\frac{h_j^2-\delta^2}{h_j}\\
&\lesssim\kappa\|\dot{\Delta}_ju^L\|_{L^2}+\kappa\mu\|\dot{\Delta}_j\left(|u|^2u\right)\|_{L^2}+\|\dot{\Delta}_j{\rm div}\left(u\times\nabla u^L\right)\|_{L^2}+2^{j}\|[\dot{\Delta}_j,u\times]\nabla\tilde{u}\|_{L^2}.
\end{align*}
Setting $\delta\rightarrow 0$, we get
\begin{align*}
&\frac{d}{dt}\|\dot{\Delta}_j\tilde{u}\|_{L^2}+C_12^{2j}\|\dot{\Delta}_j\tilde{u}\|_{L^2}+\kappa\|\dot{\Delta}_j\tilde{u}\|_{L^2}\\
&\lesssim\kappa\|\dot{\Delta}_ju^L\|_{L^2}+\kappa\mu\|\dot{\Delta}_j\left(|u|^2u\right)\|_{L^2}+\|\dot{\Delta}_j{\rm div}\left(u\times\nabla u^L\right)\|_{L^2}+2^{j}\|[\dot{\Delta}_j,u\times]\nabla\tilde{u}\|_{L^2}.
\end{align*}
Then taking $\sum\limits_{j\in \mathbb{Z}}2^{\frac{3}{2}j}$ on both sides, applying \eqref{product est}, Sobolev embedding \eqref{embedding}, Lemma \ref{Lemma3} and Lemma \ref{Lemma4}, we obtain
\begin{align}\label{est3}
&\frac{d}{dt}\|\tilde{u}\|_{\dot{B}_{2,1}^{\frac{3}{2}}}+C_1\|\tilde{u}\|_{\dot{B}_{2,1}^{\frac{7}{2}}}+\kappa\|\tilde{u}\|_{\dot{B}_{2,1}^{\frac{3}{2}}}\notag\\
&\lesssim\kappa\|u^L\|_{\dot{B}_{2,1}^{\frac{3}{2}}}+\kappa\mu\||u|^2u\|_{\dot{B}_{2,1}^{\frac{3}{2}}}+\|{\rm div}\left(u\times\nabla u^L\right)\|_{\dot{B}_{2,1}^{\frac{3}{2}}}+\sum\limits_{j\in \mathbb{Z}}2^{\frac{5}{2}j}\|[\dot{\Delta}_j,u\times]\nabla\tilde{u}\|_{L^2}\notag\\
&\lesssim \|u^L\|_{\dot{B}_{2,1}^{\frac{3}{2}}}+\|u\|_{\dot{B}_{2,1}^{\frac{3}{2}}}+\|u^L\|_{\dot{B}_{2,1}^{\frac{5}{2}}}\|u\|_{\dot{B}_{2,1}^{\frac{5}{2}}}\notag\\
&+\|u^L\|_{\dot{B}_{2,1}^{\frac{7}{2}}}\|u\|_{\dot{B}_{2,1}^{\frac{3}{2}}}+\|\tilde{u}\|_{\dot{B}_{2,1}^{\frac{7}{2}-\frac{2}{\rho}}}\|u\|_{\dot{B}_{2,1}^{\frac{3}{2}+\frac{2}{\rho}}}
+\|u\|_{\dot{B}_{2,1}^{\frac{7}{2}-\frac{2}{\rho}}}\|\tilde{u}\|_{\dot{B}_{2,1}^{\frac{3}{2}+\frac{2}{\rho}}}.
\end{align}
By virtue of the interpolation inequality \eqref{interpolation}, one has
$$\|u\|_{\dot{B}_{2,1}^{\frac{7}{2}-\frac{2}{\rho}}}\lesssim \|u\|^{\frac{1}{\rho}}_{\dot{B}_{2,1}^{\frac{3}{2}}}\|u\|^{1-\frac{1}{\rho}}_{\dot{B}_{2,1}^{\frac{7}{2}}},\;\;\|u\|_{\dot{B}_{2,1}^{\frac{3}{2}+\frac{2}{\rho}}}\lesssim \|u\|^{1-\frac{1}{\rho}}_{\dot{B}_{2,1}^{\frac{3}{2}}}\|u\|^{\frac{1}{\rho}}_{\dot{B}_{2,1}^{\frac{7}{2}}}.$$
Inserting these two inequalities into \eqref{est3}, we infer that
\begin{align}\label{est4}
&\frac{d}{dt}\|\tilde{u}\|_{\dot{B}_{2,1}^{\frac{3}{2}}}+C_1\|\tilde{u}\|_{\dot{B}_{2,1}^{\frac{7}{2}}}+\kappa\|\tilde{u}\|_{\dot{B}_{2,1}^{\frac{3}{2}}}\notag\\
&\leqslant\frac{C_1}{2}\|\tilde{u}\|_{\dot{B}_{2,1}^{\frac{7}{2}}}+C\varphi(t)+C\|\tilde{u}\|_{\dot{B}_{2,1}^{\frac{3}{2}}}\psi(t)
+C\|\tilde{u}\|_{\dot{B}_{2,1}^{\frac{3}{2}}}\|\tilde{u}\|_{\dot{B}_{2,1}^{\frac{7}{2}}},
\end{align}
where $\varphi(t)$, $\psi(t)$ are defined by \eqref{def1} and \eqref{def2}.
Suppose that $\|\tilde{u}\|_{\dot{B}_{2,1}^{\frac{3}{2}}}\leqslant\frac{C_1}{4C}$, by using \eqref{est4}, then we have
\begin{align}\label{est5}
\frac{d}{dt}\|\tilde{u}\|_{\dot{B}_{2,1}^{\frac{3}{2}}}+\frac{C_1}{4}\|\tilde{u}\|_{\dot{B}_{2,1}^{\frac{7}{2}}}+\kappa\|\tilde{u}\|_{\dot{B}_{2,1}^{\frac{3}{2}}}\leqslant C\varphi(t)+C\|\tilde{u}\|_{\dot{B}_{2,1}^{\frac{3}{2}}}\psi(t).
\end{align}
Combining \eqref{est5} and Gronwall's inequality, we get the following inequality:
\begin{align}\label{est6}
&\|\tilde{u}(t)\|_{\dot{B}_{2,1}^{\frac{3}{2}}}+\int_0^T\frac{C_1}{4}\|\tilde{u}(\tau)\|_{\dot{B}_{2,1}^{\frac{7}{2}}}+\kappa\|\tilde{u}(\tau)\|_{\dot{B}_{2,1}^{\frac{3}{2}}}\leqslant C\int_0^T\varphi(t)e^{C\int_t^T\psi(s)ds}dt<C\varepsilon,
\end{align}
while we take $\varepsilon=\frac{C_1}{4C^2}$. A standard bootstrap argument implies the desired conclusion.
\end{proof}

\underline{Second step: Construct approximate solutions.} In order to apply Friedrichs method to construct the approximate solutions, we consider the spectral cut--off operator $\mathbb{E}_n$ defined as
$$\mathcal{F}(\mathbb{E}_nf)(\xi)=1_{\{n^{-1}\leqslant|\xi|\leqslant n\}}(\xi)\hat{f}(\xi).$$
We aim to solve the following modified system:
\begin{equation}\label{approximate system}
\begin{cases}
\partial_tu=\Delta\mathbb{E}_nu-\kappa \mathbb{E}_nu+\mathbb{E}_n(\mathbb{E}_nu\times\Delta\mathbb{E}_nu)-\kappa\mu\mathbb{E}_n(|\mathbb{E}_nu|^2\mathbb{E}_nu),\\
u|_{t=0}=\mathbb{E}_nu_0.
\end{cases}
\end{equation}
One can justify the map
$$u\rightarrow\Delta\mathbb{E}_nu-\kappa \mathbb{E}_nu+\mathbb{E}_n(\mathbb{E}_nu\times\Delta\mathbb{E}_nu)-\kappa\mu\mathbb{E}_n(|\mathbb{E}_nu|^2\mathbb{E}_nu)$$
is locally Lipschitz in terms of the norm $L^2$, by the fact that the operator $\mathbb{E}_n$ maps $L^2$ to all Sobolev spaces. Thus, the ODE \eqref{approximate system} in the Banach space $L^2$ admits a unique maximal solution $u^n\in \mathcal{C}^1([0,T^n);L^2)$. Furthermore, $\mathbb{E}_nu^n$ also admits a solution of \eqref{approximate system} since $\mathbb{E}_n^2=\mathbb{E}_n$. By the uniqueness, one has $\mathbb{E}_nu^n=u^n$, thus $u^n\in \mathcal{C}^1([0,T^n);H^s)$ for any $s\in \mathbb{R}$. For the sake of convenience, we rewrite \eqref{approximate system} as
\begin{equation}\label{approximate system1}
\begin{cases}
\partial_tu^n=\Delta u^n-\kappa u^n+\mathbb{E}_n(u^n\times\Delta u^n)-\kappa\mu\mathbb{E}_n(|u^n|^2u^n),\\
u^n|_{t=0}=\mathbb{E}_nu_0.
\end{cases}
\end{equation}

\underline{Third step: Uniform estimates.} Notice that $\mathbb{E}_n$ doesn't effect on the energy estimates since it's an $L^2$ orthogonal projector. Thus, Proposition \ref{proposition3.1} can be applied on the equation \eqref{approximate system1}. Next, we shall prove that $T^n>T$ for some $T>0$ small enough. For this purpose, we define
$$u^{nL}:=e^{t\Delta}\mathbb{E}_nu_0=\mathbb{E}_ne^{t\Delta}u_0,\;\;\tilde{u}^n:=u^n-u^{nL}.$$
Combining Lemma \ref{Lemma1} and the fact that $\mathbb{E}_n$ is an operator from $\dot{B}_{2,1}^{s}$ to itself with norm $1$, we can infer that condition \eqref{condition1} holds for some $T$ independent of $n$. Assuming $T^n\leqslant T$, and applying Proposition \ref{proposition3.1}, the following estimate holds uniformly for $0\leqslant t< T^n$:
$$\|u^n(t)\|_{\dot{B}_{2,1}^{\frac{3}{2}}}+\int_0^t\frac{C_1}{4}\|u^n(\tau)\|_{\dot{B}_{2,1}^{\frac{7}{2}}}+\kappa\|u^n(\tau)\|_{\dot{B}_{2,1}^{\frac{3}{2}}}dt\leqslant C\left(\|u_0\|_{\dot{B}_{2,1}^{\frac{3}{2}}}+\varepsilon\right).$$
Therefore, the standard continuation criterion for ordinary differential equations implies that $T^n > T$. Furthermore, we have the following estimates:
\begin{align}
\|\tilde{u}\|_{L^\infty_T\dot{B}_{2,1}^{\frac{3}{2}}}+\frac{C_1}{4}\|\tilde{u}\|_{L^1_T\dot{B}_{2,1}^{\frac{7}{2}}}+\kappa\|\tilde{u}\|_{L^1\dot{B}_{2,1}^{\frac{3}{2}}}\leqslant C\varepsilon,
\end{align}
\begin{align}
\|u\|_{L^\infty_T\dot{B}_{2,1}^{\frac{3}{2}}}+\frac{C_1}{4}\|u\|_{L^1\dot{B}_{2,1}^{\frac{7}{2}}}+\kappa\|u\|_{L^1\dot{B}_{2,1}^{\frac{3}{2}}}dt\leqslant C\left(\|u_0\|_{\dot{B}_{2,1}^{\frac{3}{2}}}+\varepsilon\right).
\end{align}

\underline{Fourth step: Existence of a solution.} We will prove that, up to an extraction, the sequence $u^n$ converges to a solution of \eqref{LLB} by the compactness argument. It's obvious that $u^{nL}\rightarrow u^L$ in $E_2(T)$ by Lemma \ref{Lemma1}. Now, we focus on the following system:
\begin{equation}\label{approximate system2}
\begin{cases}
\partial_t\tilde{u}^n=\Delta \tilde{u}^n-\kappa u^n+\mathbb{E}_n(u^n\times\Delta u^n)-\kappa\mu\mathbb{E}_n(|u^n|^2u^n),\\
\tilde{u}^n|_{t=0}=\mathbb{E}_nu_0.
\end{cases}
\end{equation}
Define the cut--off function $\phi_j\in C_c^\infty(\mathbb{R}^3)$ with $supp\phi_j\subset B(0,j+1)$ and $\phi_j\equiv1$ in $B(0,j)$. Then for any $j\in \mathbb{N}$, we have
\begin{align}\label{est7}
\|\partial_t\phi_j\tilde{u}^n\|_{L^2_T\dot{B}_{2,1}^{\frac{1}{2}}}\lesssim&\|\tilde{u}^n\|_{L^2_T\dot{B}_{2,1}^{\frac{5}{2}}}
+\|\phi_ju^n\|_{L^2_T\dot{B}_{2,1}^{\frac{1}{2}}}\notag\\
&+\|u^n\|_{L^\infty_T\dot{B}_{2,1}^{\frac{3}{2}}}\|u^n\|_{L^2_T\dot{B}_{2,1}^{\frac{5}{2}}}
+\|\phi_j{E}_n(|u^n|^2u^n)\|_{L^2_T\dot{B}_{2,1}^{\frac{1}{2}}}.
\end{align}
From \cite[Proposition 2.93]{BCD}, we know that
$$\|\phi_ju^n\|_{L^2_T\dot{B}_{2,1}^{\frac{1}{2}}}\sim \|\phi_ju^n\|_{L^2_TB_{2,1}^{\frac{1}{2}}}\leqslant \|\phi_ju^n\|_{L^2_TB_{2,1}^{\frac{3}{2}}}\thicksim\|\phi_ju^n\|_{L^2_T\dot{B}_{2,1}^{\frac{3}{2}}}\lesssim \|u^n\|_{L^2_T\dot{B}_{2,1}^{\frac{3}{2}}}.$$
Applying a similar argument on the last term in \eqref{est7}, we conclude that $\partial_t\phi_j\tilde{u}^n$ is uniformly bounded in $L^2_T\dot{B}_{2,1}^{\frac{1}{2}}$. Furthermore, a simple computation implies $\partial_t\tilde{u}^n$ is uniformly bounded in $L^1_T\dot{B}_{2,1}^{\frac{3}{2}}$. Since the application $u\rightarrow \phi_ju$ is compact from $\dot{B}_{2,1}^{\frac{3}{2}}$ into $\dot{B}_{2,1}^{\frac{1}{2}}$, and from $\dot{B}_{2,1}^{\frac{5}{2}}$ into $\dot{B}_{2,1}^{\frac{3}{2}}$, we infer that $\phi_j\tilde{u}^n$ is compact in $\mathcal{C}([0,T];\dot{B}_{2,1}^{\frac{1}{2}})$ and in $L^2([0,T];\dot{B}_{2,1}^{\frac{3}{2}})$ by Aubin-Lions theorem. Applying a same argument of \cite[p. 443]{BCD}, we know that there exists $\tilde{u}\in L^\infty_T\dot{B}_{2,1}^{\frac{3}{2}}\cap L^1_T\dot{B}_{2,1}^{\frac{7}{2}}$, such that for any $j\in \mathbb{Z}$,
\begin{align}\label{strong converge}
\phi_j\tilde{u}^n\rightarrow \phi_j\tilde{u}\quad {\rm in}\quad \mathcal{C}([0,T];\dot{B}_{2,1}^{\frac{1}{2}})\cap L^2([0,T];\dot{B}_{2,1}^{\frac{3}{2}}).
\end{align}
Additionally, we have $\tilde{u}|_{t=0}=0$.

Now, we prove $u=u^{L}+\tilde{u}$ solves the equation \eqref{LLB}. For any $\phi\in \mathcal{C}_c^\infty(\mathbb{R}^3)$, $supp\phi\subset B(0,j)$ for same $j\in \mathbb{N}$. As an example, we explain how to pass the limit of the nonlinear term $\mathbb{E}_n(u^n\times\Delta u^n)$. For any $0<t\leqslant T$, we have
\begin{align*}
&\left|\int_0^t\int_{\mathbb{R}^3}(\mathbb{E}_n(u^n\times\Delta u^n)-u\times\Delta u)\phi dxd\tau\right|\\
&\leqslant\left|\int_0^t\int_{\mathbb{R}^3}u^n\times\Delta u^n(\mathbb{E}_n-{\rm Id})\phi dxd\tau\right|+\left|\int_0^t\int_{\mathbb{R}^3}(u^n\times\Delta u^n-u\times\Delta u)\phi \phi_j dxd\tau\right|\\
&\leqslant C\|u^n\times\Delta u^n\|_{L^1_T\dot{B}_{2,1}^{\frac{3}{2}}}\|(\mathbb{E}_n-{\rm Id})\phi\|_{\dot{B}_{2,\infty}^{-\frac{3}{2}}}\\
&\quad+\int_0^t\int_{\mathbb{R}^3}\phi_j(u^n-u)\times\Delta u^n\phi+u\times\Delta \phi_j(u^n-u)\phi dxd\tau\\
&\lesssim \|u^n\|_{L^\infty_T\dot{B}_{2,1}^{\frac{3}{2}}}\|u^n\|_{L^1_T\dot{B}_{2,1}^{\frac{7}{2}}}\|(\mathbb{E}_n-{\rm Id})\phi\|_{\dot{B}_{2,1}^{-\frac{3}{2}}}\\
&\quad+\|\phi_j(u^n-u)\|_{\mathcal{C}_T\dot{B}_{2,1}^{\frac{1}{2}}}\|u^n\|_{L^1_T\dot{B}_{2,1}^{\frac{7}{2}}}\|\phi\|_{\dot{B}_{2,\infty}^{-\frac{1}{2}}}
+\|u\|_{L^2_T\dot{B}_{2,1}^{\frac{3}{2}}}\|\phi_j(u^n-u)\|_{L^2_T\dot{B}_{2,1}^{\frac{3}{2}}}\|\phi\|_{\dot{B}_{2,\infty}^{\frac{1}{2}}}.
\end{align*}
We deduce that the first term tends to zero by the definition of the Besov space. In view of the uniform bounds of $u^n$ and strong convergence of $\phi_j\tilde{u}^n$ in \eqref{strong converge}, the second and third terms tend to zero. The other terms can be treated in a similar way.

\underline{Fifth step: Uniqueness.} Suppose $u_1$, $u_2$ are the solutions of equation \eqref{LLB} on $[0,T]\times\mathbb{R}^3$, and define $\delta u:=u_1-u_2$. Then, $\delta u$ satisfies the following system:
\begin{equation}\label{LLB2}
\begin{cases}
\partial_t\delta u=\Delta \delta u-\kappa \delta u+{\rm div}(\delta u\times\nabla u_1)+{\rm div}(u_2\times\nabla\delta u)-\kappa\mu\delta u(u_1+u_2)u_1-\kappa\mu |u_2|^2\delta u,\\
\delta u|_{t=0}=0.
\end{cases}
\end{equation}
Using a similar argument of a priori estimate in the First Step, we infer that
\begin{align}\label{est8}
&\frac{d}{dt}\|\delta u\|_{\dot{B}_{2,1}^{\frac{3}{2}}}+C_1\|\delta u\|_{\dot{B}_{2,1}^{\frac{7}{2}}}+\kappa\|\delta u\|_{\dot{B}_{2,1}^{\frac{3}{2}}}\notag\\
&\lesssim\|{\rm div}(\delta u\times\nabla u_1)\|_{\dot{B}_{2,1}^{\frac{3}{2}}}+\kappa\mu\|\delta u(u_1+u_2)u_1\|_{\dot{B}_{2,1}^{\frac{3}{2}}}+\kappa\mu\||u_2|^2\delta u\|_{\dot{B}_{2,1}^{\frac{3}{2}}}\notag\\
&\quad+\sum\limits_{j\in \mathbb{Z}}2^{\frac{5}{2}j}\|[\dot{\Delta}_j,u_2\times]\nabla\delta u\|_{L^2}\notag\\
&\lesssim \|\delta u\|_{\dot{B}_{2,1}^{\frac{5}{2}}}\|u_1\|_{\dot{B}_{2,1}^{\frac{5}{2}}}+\|\delta u\|_{\dot{B}_{2,1}^{\frac{3}{2}}}\|u_1\|_{\dot{B}_{2,1}^{\frac{7}{2}}}+\|\delta u\|_{\dot{B}_{2,1}^{\frac{7}{2}-\frac{2}{\rho}}}\|u_2\|_{\dot{B}_{2,1}^{\frac{2}{\rho}+\frac{3}{2}}}\notag\\
&\quad+\|u_2\|_{\dot{B}_{2,1}^{\frac{7}{2}-\frac{2}{\rho}}}\|\delta u\|_{\dot{B}_{2,1}^{\frac{2}{\rho}+\frac{3}{2}}}
+\|\delta u\|_{\dot{B}_{2,1}^{\frac{3}{2}}}(\|u_1\|^2_{\dot{B}_{2,1}^{\frac{3}{2}}}+\|u_2\|^2_{\dot{B}_{2,1}^{\frac{3}{2}}})\notag\\
&\leqslant\frac{C_1}{2}\|\delta u\|_{\dot{B}_{2,1}^{\frac{7}{2}}}+C\|\delta u\|_{\dot{B}_{2,1}^{\frac{3}{2}}}\notag\\
&\quad\left(\|u_1\|^2_{\dot{B}_{2,1}^{\frac{5}{2}}}+\|u_1\|_{\dot{B}_{2,1}^{\frac{7}{2}}}+\| u_2\|^{\frac{\rho}{\rho-1}}_{\dot{B}_{2,1}^{\frac{7}{2}-\frac{2}{\rho}}}+\|u_2\|^\rho_{\dot{B}_{2,1}^{\frac{2}{\rho}+\frac{3}{2}}}+
\|u_1\|^2_{\dot{B}_{2,1}^{\frac{3}{2}}}+\|u_2\|^2_{\dot{B}_{2,1}^{\frac{3}{2}}}\right).
\end{align}
Then, one has $\delta u=0$ on $[0,T]$ by Gronwall's inequality.

\subsection{Proof of Theorem 1.2}\label{sec3.3}

In this subsection, we will adapt the proof of Theorem \ref{Thm1.1} to obtain the global result stated in Theorem \ref{Thm1.2}. First of all, we establish the a priori estimate of the LLB equation \eqref{LLB}.
\begin{Proposition}\label{proposition3.2}
Suppose that $u$ is a smooth solution of the Cauchy problem \eqref{LLB} on $[0,T]\times \mathbb{R}^3$ for some $T>0$. There exist positive constants $\varepsilon$ and $C_1$ such that if $\|u_0\|_{\dot{B}_{2,1}^{\frac{3}{2}}}<\varepsilon$, we have
\begin{align}\label{est20}
\|u\|_{L^\infty_T\dot{B}_{2,1}^{\frac{3}{2}}}+\int_0^T\frac{C_1}{2}\|u\|_{\dot{B}_{2,1}^{\frac{7}{2}}}+\frac{\kappa}{2}\|u\|_{\dot{B}_{2,1}^{\frac{3}{2}}}dt\leqslant\varepsilon.
\end{align}
\end{Proposition}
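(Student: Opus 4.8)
The plan is to reproduce the energy argument from the first step of the proof of Theorem \ref{Thm1.1}, but \emph{without} the splitting $u=u^{L}+\tilde u$. In the small-data regime one expects every Besov norm of $u$ to stay comparable to $\|u_{0}\|_{\dot{B}_{2,1}^{\frac{3}{2}}}$, so each nonlinear term can be absorbed directly by the parabolic gain; in particular, no commutator estimate (Lemma \ref{Lemma4}) is needed here, which is precisely the simplification that becomes impossible in the large-data setting of Theorem \ref{Thm1.1}.

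First I would localise in frequency: apply $\dot{\Delta}_{j}$ to \eqref{LLB} and take the $L^{2}$ inner product with $\dot{\Delta}_{j}u$. Integration by parts turns the linear part into favourable terms, $(\dot{\Delta}_{j}\Delta u,\dot{\Delta}_{j}u)=-\|\nabla\dot{\Delta}_{j}u\|_{L^{2}}^{2}\leqslant-C_{1}2^{2j}\|\dot{\Delta}_{j}u\|_{L^{2}}^{2}$ by spectral localisation and $(-\kappa\dot{\Delta}_{j}u,\dot{\Delta}_{j}u)=-\kappa\|\dot{\Delta}_{j}u\|_{L^{2}}^{2}$, both of which stay on the left; the right-hand side is $(\dot{\Delta}_{j}(u\times\Delta u),\dot{\Delta}_{j}u)-\kappa\mu(\dot{\Delta}_{j}(|u|^{2}u),\dot{\Delta}_{j}u)$, bounded by Cauchy--Schwarz and then divided by $h_{j}=(\|\dot{\Delta}_{j}u\|_{L^{2}}^{2}+\delta^{2})^{1/2}$ with $\delta\to0$ exactly as in \eqref{est2}. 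Multiplying the resulting first-order inequality by $2^{\frac{3}{2}j}$ and summing over $j\in\mathbb{Z}$ gives
\[
\frac{d}{dt}\|u\|_{\dot{B}_{2,1}^{\frac{3}{2}}}+C_{1}\|u\|_{\dot{B}_{2,1}^{\frac{7}{2}}}+\kappa\|u\|_{\dot{B}_{2,1}^{\frac{3}{2}}}\lesssim\|u\times\Delta u\|_{\dot{B}_{2,1}^{\frac{3}{2}}}+\kappa\mu\||u|^{2}u\|_{\dot{B}_{2,1}^{\frac{3}{2}}}.
\]
For $u\times\Delta u$ the product estimate \eqref{product est} at the endpoint indices $s_{1}=s_{2}=\frac{3}{2}$ (allowed since there $s_{1}+s_{2}=3>0$) yields $\|u\times\Delta u\|_{\dot{B}_{2,1}^{\frac{3}{2}}}\lesssim\|u\|_{\dot{B}_{2,1}^{\frac{3}{2}}}\|\Delta u\|_{\dot{B}_{2,1}^{\frac{3}{2}}}=\|u\|_{\dot{B}_{2,1}^{\frac{3}{2}}}\|u\|_{\dot{B}_{2,1}^{\frac{7}{2}}}$ (one may equally use the algebra property \eqref{algebra} with $\dot{B}_{2,1}^{\frac{3}{2}}\hookrightarrow L^{\infty}$), and for the cubic term Lemma \ref{Lemma3} with $F(u)=|u|^{2}u$ together with the same embedding gives $\||u|^{2}u\|_{\dot{B}_{2,1}^{\frac{3}{2}}}\lesssim\|u\|_{\dot{B}_{2,1}^{\frac{3}{2}}}^{3}$. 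Hence there is a constant $C$ with
\[
\frac{d}{dt}\|u\|_{\dot{B}_{2,1}^{\frac{3}{2}}}+C_{1}\|u\|_{\dot{B}_{2,1}^{\frac{7}{2}}}+\kappa\|u\|_{\dot{B}_{2,1}^{\frac{3}{2}}}\leqslant C\|u\|_{\dot{B}_{2,1}^{\frac{3}{2}}}\|u\|_{\dot{B}_{2,1}^{\frac{7}{2}}}+C\kappa\|u\|_{\dot{B}_{2,1}^{\frac{3}{2}}}^{3}.
\]

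Finally I would close the estimate by a continuity (bootstrap) argument. Let $T_{1}\leqslant T$ be the supremum of times $\tau$ with $\|u\|_{L^{\infty}_{\tau}\dot{B}_{2,1}^{\frac{3}{2}}}\leqslant 2\varepsilon$; this set is nonempty and relatively open because $\|u_{0}\|_{\dot{B}_{2,1}^{\frac{3}{2}}}<\varepsilon$ and $u\in\mathcal{C}([0,T];\dot{B}_{2,1}^{\frac{3}{2}})$. On $[0,T_{1})$ one has $C\|u\|_{\dot{B}_{2,1}^{\frac{3}{2}}}\leqslant 2C\varepsilon$ and $C\kappa\|u\|_{\dot{B}_{2,1}^{\frac{3}{2}}}^{2}\leqslant 4C\kappa\varepsilon^{2}$, so choosing $\varepsilon$ small enough that $2C\varepsilon\leqslant\frac{C_{1}}{2}$ and $4C\varepsilon^{2}\leqslant\frac{1}{2}$ absorbs both right-hand terms into $\frac{C_{1}}{2}\|u\|_{\dot{B}_{2,1}^{\frac{7}{2}}}$ and $\frac{\kappa}{2}\|u\|_{\dot{B}_{2,1}^{\frac{3}{2}}}$, leaving $\frac{d}{dt}\|u\|_{\dot{B}_{2,1}^{\frac{3}{2}}}+\frac{C_{1}}{2}\|u\|_{\dot{B}_{2,1}^{\frac{7}{2}}}+\frac{\kappa}{2}\|u\|_{\dot{B}_{2,1}^{\frac{3}{2}}}\leqslant0$ on $[0,T_{1})$. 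Integrating in time and using $\|u_{0}\|_{\dot{B}_{2,1}^{\frac{3}{2}}}<\varepsilon$ forces $\|u(t)\|_{\dot{B}_{2,1}^{\frac{3}{2}}}<\varepsilon$ throughout $[0,T_{1})$, hence $\|u\|_{L^{\infty}_{T_{1}}\dot{B}_{2,1}^{\frac{3}{2}}}\leqslant\varepsilon<2\varepsilon$ by continuity, so $T_{1}=T$ by maximality; integrating the same inequality over $[0,T]$ then gives \eqref{est20}. I do not expect a genuine obstacle in this case: unlike Theorem \ref{Thm1.1}, the large-data cancellation issue does not arise, and the only points needing care are the use of \eqref{product est} at the borderline $s_{1}=s_{2}=\frac{3}{2}$ and the calibration of $\varepsilon$ so that the bootstrap closes.
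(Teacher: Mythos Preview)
Your proposal is correct and follows essentially the same argument as the paper: localise, pass to a first-order inequality via the $h_{j}$ trick, sum to obtain $\frac{d}{dt}\|u\|_{\dot{B}_{2,1}^{3/2}}+C_{1}\|u\|_{\dot{B}_{2,1}^{7/2}}+\kappa\|u\|_{\dot{B}_{2,1}^{3/2}}\lesssim\|u\|_{\dot{B}_{2,1}^{3/2}}\|u\|_{\dot{B}_{2,1}^{7/2}}+\|u\|_{\dot{B}_{2,1}^{3/2}}^{3}$, and close by a bootstrap with $\varepsilon$ small. The only cosmetic differences are your use of a $2\varepsilon$ bootstrap threshold and Lemma~\ref{Lemma3} for the cubic term, whereas the paper states the bootstrap directly at level $\varepsilon$ and cites \eqref{algebra}/\eqref{product est}.
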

\begin{proof}
Applying $\dot{\Delta}_j$ on \eqref{LLB}, taking inner products with $\dot{\Delta}_ju$, and using a same argument in Proposition \ref{proposition3.1}, we have
\begin{align}\label{est21}
&\frac{d}{dt}\|\dot{\Delta}_ju\|_{L^2}+C_12^{2j}\|\dot{\Delta}_ju\|_{L^2}+\kappa\|\dot{\Delta}_ju\|_{L^2}\notag\\
&\leqslant\|\dot{\Delta}_j\left(u\times\Delta u\right)\|_{L^2}+\kappa\mu\|\dot{\Delta}_j\left(|u|^2u\right)\|_{L^2}.
\end{align}
Then taking $\sum\limits_{j\in \mathbb{Z}}2^{\frac{3}{2}j}$ on both sides of \eqref{est21}, from \eqref{algebra} and \eqref{product est}, we deduce that
\begin{align}\label{est22}
&\frac{d}{dt}\|u\|_{\dot{B}_{2,1}^{\frac{3}{2}}}+C_1\|u\|_{\dot{B}_{2,1}^{\frac{7}{2}}}+\kappa\|u\|_{\dot{B}_{2,1}^{\frac{3}{2}}}\notag\\
&\lesssim\|u\|_{\dot{B}_{2,1}^{\frac{3}{2}}}\|u\|_{\dot{B}_{2,1}^{\frac{7}{2}}}+\|u\|^3_{\dot{B}_{2,1}^{\frac{3}{2}}}.
\end{align}
Suppose that $\|u(t)\|_{\dot{B}_{2,1}^{\frac{3}{2}}}\leqslant\varepsilon$ for $t\in (0,T]$, then from \eqref{est22} one has
\begin{align}\label{est23}
&\|u(t)\|_{\dot{B}_{2,1}^{\frac{3}{2}}}+\int_0^tC_1\|u(\tau)\|_{\dot{B}_{2,1}^{\frac{7}{2}}}+\kappa\|u(\tau)\|_{\dot{B}_{2,1}^{\frac{3}{2}}}d\tau\notag\\
&\leqslant\|u_0\|_{\dot{B}_{2,1}^{\frac{3}{2}}}+\int_0^tC\varepsilon\|u\|_{\dot{B}_{2,1}^{\frac{7}{2}}}+C\varepsilon^2\|u\|_{\dot{B}_{2,1}^{\frac{3}{2}}}d\tau,
\end{align}
Setting $\varepsilon$ small enough satisfying $C\varepsilon<\frac{C_1}{2}$ and $C\varepsilon^2<\frac{\kappa}{2}$, a standard bootstrap argument implies the result of Proposition \ref{proposition3.2}.
\end{proof}
Taking the process of Theorem \ref{Thm1.1} as an example, we sketch the main points of the rest proof:
\begin{enumerate}
 \item use Friedrichs method and get a sequence $u^n$ of global smooth solutions to the LLB equation \eqref{LLB};
 \item apply Proposition \ref{proposition3.2} to the system of $u_n$ and get the uniform estimate for $u_n$ in $E_2$;
 \item through the compactness argument to obtain, up to extraction, $u_n$ converges to a solution of  the LLB equation supplemented with the initial data $u_0$;
 \item through a similar method of a priori estimate to get the uniqueness of the solution.
\end{enumerate}
Since the justification of those steps are similar to that of Theorem \ref{Thm1.1}, we only detail the different parts.

From the Second Step in Theorem \ref{Thm1.1}, we know that the approximate solution $u^n\in \mathcal{C}^1([0,T^n);H^s)$, for any $s\in \mathbb{R}$. We claim that $T^n=\infty$ under the hypothesis of Theorem \ref{Thm1.2}. By using Proposition \ref{proposition3.2}, one has
\begin{align*}
\|u^n\|_{L^\infty_T\dot{B}_{2,1}^{\frac{3}{2}}}+\int_0^T\frac{C_1}{2}\|u^n(t)\|_{\dot{B}_{2,1}^{\frac{7}{2}}}+\frac{\kappa}{2}\|u^n(t)\|_{\dot{B}_{2,1}^{\frac{3}{2}}}dt\leqslant\varepsilon,
\end{align*}
uniformly for $0<T< T^n$. Thus, the standard continuation criterion for ordinary differential equations implies that $T^n=\infty$.

By a same method of Fourth Step in Theorem \ref{Thm1.1}, we get the global existence of the solution to the LLB equation. The only difference is that we must modify the proof of \cite[p. 443]{BCD} to show $u\in L^1\dot{B}_{2,1}^{\frac{7}{2}}\cap L^1\dot{B}_{2,1}^{\frac{3}{2}}$. Since $u^n$ is uniformly bounded in $L^1\dot{B}_{2,1}^{\frac{7}{2}}\cap L^1\dot{B}_{2,1}^{\frac{3}{2}}$ with
$$\|u^n\|_{L^1\dot{B}_{2,1}^{\frac{7}{2}}\cap L^1\dot{B}_{2,1}^{\frac{3}{2}}}\leqslant C,$$
for some positive constant $C$, we know that $u\in \mathcal{M}(\dot{B}_{2,1}^{\frac{7}{2}})\cap\mathcal{M}(\dot{B}_{2,1}^{\frac{3}{2}})$, which are the bounded measures on $\mathbb{R}^+$ with values in the space $\dot{B}_{2,1}^{\frac{7}{2}}\cap \dot{B}_{2,1}^{\frac{3}{2}}$ and that
$$\int_0^\infty d\|u(t)\|_{\dot{B}_{2,1}^{\frac{7}{2}}}\leqslant C,\quad\int_0^\infty d\|u(t)\|_{\dot{B}_{2,1}^{\frac{3}{2}}}\leqslant C.$$
Thus, we have
\begin{align}\label{est24}
\int_0^Td\|\mathbb{E}_nu(t)\|_{\dot{B}_{2,1}^{\frac{7}{2}}}\leqslant C,\quad\int_0^Td\|\mathbb{E}_nu(t)\|_{\dot{B}_{2,1}^{\frac{3}{2}}}\leqslant C,
\end{align}
uniformly for $T>0$ and $n\in\mathbb{N}$. Since $u\in L^\infty\dot{B}_{2,1}^{\frac{3}{2}}$, one has $\mathbb{E}_nu\in L^1_T\dot{B}_{2,1}^{\frac{7}{2}}\cap L^1_T\dot{B}_{2,1}^{\frac{3}{2}}$. Thus \eqref{est24} can be written as
$$\int_0^T\|\mathbb{E}_nu(t)\|_{\dot{B}_{2,1}^{\frac{7}{2}}}dt\leqslant C,\quad\int_0^T\|\mathbb{E}_nu(t)u(t)\|_{\dot{B}_{2,1}^{\frac{3}{2}}}dt\leqslant C,$$
uniformly for $T>0$ and $n\in\mathbb{N}$. Therefore, we conclude that $u\in L^1\dot{B}_{2,1}^{\frac{7}{2}}\cap L^1\dot{B}_{2,1}^{\frac{3}{2}}$.
The continuity of $u$ can be deduced by using the fact that $\partial_tu\in L^1\dot{B}_{2,1}^{\frac{3}{2}}$.

For the uniqueness, suppose that $u_1$, $u_2$ are solutions to the LLB equation \eqref{LLB}. Without loss of generality, take $u_2$ be the solution built previously and $\|u_2\|_{L^\infty\dot{B}_{2,1}^{\frac{3}{2}}}\leqslant \varepsilon$ provided $\|u_2(0)\|_{\dot{B}_{2,1}^{\frac{3}{2}}}<\varepsilon$. Defining $\delta u:=u_1-u_2$, and applying a same argument of a priori estimate to the system
\begin{equation}\label{LLB3}
\begin{cases}
\partial_t\delta u=\Delta \delta u-\kappa \delta u+\delta u\times\Delta u_1+u_2\times\Delta\delta u-\kappa\mu\delta u(u_1+u_2)u_1-\kappa\mu |u_2|^2\delta u,\\
\delta u|_{t=0}=0,
\end{cases}
\end{equation}
 we get
\begin{align}\label{est26}
&\frac{d}{dt}\|\delta u\|_{\dot{B}_{2,1}^{\frac{3}{2}}}+C_1\|\delta u\|_{\dot{B}_{2,1}^{\frac{7}{2}}}+\kappa\|\delta u\|_{\dot{B}_{2,1}^{\frac{3}{2}}}\notag\\
&\lesssim \|\delta u\|_{\dot{B}_{2,1}^{\frac{3}{2}}}\|u_1\|_{\dot{B}_{2,1}^{\frac{7}{2}}}+\|\delta u\|_{\dot{B}_{2,1}^{\frac{7}{2}}}\|u_2\|_{\dot{B}_{2,1}^{\frac{3}{2}}}
+\|\delta u\|_{\dot{B}_{2,1}^{\frac{3}{2}}}\left(\|u_1\|^2_{\dot{B}_{2,1}^{\frac{3}{2}}}+\|u_2\|^2_{\dot{B}_{2,1}^{\frac{3}{2}}}\right).
\end{align}
By virtue of $\|u_2\|_{L^\infty\dot{B}_{2,1}^{\frac{3}{2}}}\leqslant \varepsilon$, Gronwall's inequality implies that $\delta u=0$.

\subsection{Proof of Theorem \ref{Thm1.3}}\label{sec3.4}
Before proving Theorem \ref{Thm1.3}, we need to establish the local existence of strong solutions to the LLB equation \eqref{LLB} in Sobolev space, which can be stated as the following proposition.
\begin{Proposition}\label{proposition3.3}
For any initial data $u_0\in H^m$ ($m\geqslant2$), there exists some $T>0$, such that the LLB equation \eqref{LLB} has a unique solution
\begin{align*}
u\in\mathcal{C}([0,T];H^m);\quad \nabla u\in L^2([0,T],H^m).
\end{align*}
\end{Proposition}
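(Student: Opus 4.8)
The plan is to reproduce, in the Sobolev scale $H^m$, the Friedrichs-approximation scheme already used in Section \ref{sec3.1}. First I would construct approximate solutions $u^n\in\mathcal{C}^1([0,T^n);H^s)$ (all $s\in\mathbb{R}$) by applying the spectral cut-off $\mathbb{E}_n$ to \eqref{LLB} exactly as in the Second Step of Section \ref{sec3.1}: the truncated right-hand side is locally Lipschitz on $L^2$, the Cauchy–Lipschitz theorem gives a maximal $L^2$-solution, and $\mathbb{E}_n^2=\mathbb{E}_n$ together with uniqueness forces $\mathbb{E}_nu^n=u^n$, hence smoothness in every Sobolev class.

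The core of the argument is a uniform $H^m$ estimate. Apply $\partial^\alpha$ with $|\alpha|\le m$ to the equation for $u^n$, take the $L^2$ inner product with $\partial^\alpha u^n$, and sum over $\alpha$. The terms $\Delta u^n$ and $-\kappa u^n$ contribute the good quantities $-\|\nabla u^n\|_{H^m}^2-\kappa\|u^n\|_{H^m}^2$. For the cubic term, the Moser (Kato–Ponce) product estimate and $H^m(\mathbb{R}^3)\hookrightarrow L^\infty$ (valid for $m\ge2$) give $|(\partial^\alpha(|u^n|^2u^n),\partial^\alpha u^n)|\lesssim\|u^n\|_{L^\infty}^2\|u^n\|_{H^m}^2$. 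The delicate term is $u^n\times\Delta u^n$: split $\partial^\alpha(u^n\times\Delta u^n)=u^n\times\partial^\alpha\Delta u^n+[\partial^\alpha,u^n\times]\Delta u^n$. Pairing the first piece with $\partial^\alpha u^n$ and integrating by parts once, the leading contribution vanishes since $(u^n\times v)\cdot v=0$, leaving $-\sum_k(\partial_ku^n\times\partial_k\partial^\alpha u^n,\partial^\alpha u^n)$, which is $\lesssim\|\nabla u^n\|_{L^\infty}\|u^n\|_{\dot{H}^{m+1}}\|u^n\|_{\dot{H}^m}$; Young's inequality absorbs the $\|u^n\|_{\dot{H}^{m+1}}^2$ factor into the dissipation. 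Each commutator piece $\partial^\beta u^n\times\partial^{\alpha-\beta}\Delta u^n$ with $1\le|\beta|\le m$ is treated after integrating by parts in one Laplacian derivative and then distributing by Hölder and Gagliardo–Nirenberg so that only one factor carries $m+1$ derivatives (absorbed) and the remaining factors are controlled by powers of $\|u^n\|_{H^m}$. At the endpoint $m=2$, where $\nabla u^n\notin L^\infty_x$ directly, I would instead use the parabolic smoothing $\nabla u^n\in L^2_TH^m$ and $\|\nabla u^n\|_{L^\infty}\lesssim\|\nabla^2u^n\|_{L^2}^{1/2}\|\nabla^3u^n\|_{L^2}^{1/2}$, again absorbing the top factor. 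The net outcome is a differential inequality $\frac{d}{dt}\|u^n\|_{H^m}^2+c\|u^n\|_{H^{m+1}}^2\le C(\|u^n\|_{H^m})\|u^n\|_{H^m}^2$, from which a standard comparison/continuation argument produces $T>0$ depending only on $\|u_0\|_{H^m}$ with $T^n>T$ and uniform bounds $u^n\in L^\infty_TH^m$, $\nabla u^n\in L^2_TH^m$.

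To pass to the limit, note that the equation and these bounds give $\partial_tu^n$ uniformly bounded in $L^2_TH^{m-2}$ (using $H^m\cdot H^{m-2}\hookrightarrow H^{m-2}$ for $m\ge2$ to handle $u^n\times\Delta u^n$). Combined with $u^n\in L^\infty_TH^m\cap L^2_TH^{m+1}$ and the Aubin–Lions–Simon lemma, localized with the cut-offs $\phi_j$ exactly as in the Fourth Step of Section \ref{sec3.1} to deal with the unbounded domain, one extracts a subsequence converging strongly in $L^2_TH^m_{loc}$ and in $\mathcal{C}_TH^{m-1}_{loc}$; this suffices to pass to the limit in all nonlinear terms, and weak-$*$ lower semicontinuity keeps the bounds, so $u\in L^\infty_TH^m$ with $\nabla u\in L^2_TH^m$ solves \eqref{LLB}. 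Time continuity $u\in\mathcal{C}([0,T];H^m)$ follows from weak continuity together with $u\in\mathcal{C}_TH^{m-1}$ (from $\partial_tu\in L^2_TH^{m-2}$), upgraded to norm continuity via the energy balance (the estimate above yields an $L^1_t$ bound on $\frac{d}{dt}\|u\|_{H^m}^2$). Finally, for uniqueness, take two solutions $u_1,u_2$ with the same data, set $\delta u=u_1-u_2$, and run an $L^2$ energy estimate: the only term requiring care is $u_2\times\Delta\delta u$, where integrating by parts gives $(u_2\times\Delta\delta u,\delta u)=-(\nabla u_2\times\nabla\delta u,\delta u)\le\varepsilon\|\nabla\delta u\|_{L^2}^2+C\|\nabla u_2\|_{L^\infty}^2\|\delta u\|_{L^2}^2$, with $\|\nabla u_2\|_{L^\infty}^2\in L^1_t$ because $\nabla u_2\in L^2_TH^m\hookrightarrow L^2_TL^\infty$; the terms $\delta u\times\Delta u_1$ and the cubic differences are estimated likewise using the regularity of $u_1,u_2$, and Gronwall forces $\delta u\equiv0$.

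I expect the main obstacle to be precisely the top-order energy estimate for $u\times\Delta u$: organizing the commutator so that every term carrying $m+1$ derivatives is absorbed by the parabolic dissipation, and in particular covering the borderline case $m=2$, where the naive $\|\nabla u\|_{L^\infty}$ control is unavailable and one must rely on the $L^2_tH^{m+1}$ smoothing.
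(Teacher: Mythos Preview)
Your scheme is correct and complete, but the a~priori estimate is organized differently from the paper. The paper keeps the linear/nonlinear splitting $u=u^L+\tilde u$ (with $u^L$ solving $\partial_tu^L=\Delta u^L-\kappa u^L$) and runs the $H^m$ energy estimate on $\tilde u$; writing $u\times\Delta u=\mathrm{div}(u\times\nabla u)$, integrating by parts once, and applying the Kato--Ponce commutator estimate (Lemma~\ref{Lemma5}) to $[\partial^\alpha,u\times]\nabla\tilde u$, it reaches an inequality closed by a bootstrap on smallness of $\|\tilde u\|_{H^m}$ under an explicit condition on $T$ through $u^L$ (Lemma~\ref{Lemma6}). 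You instead work directly on $u^n$ and close via an ODE comparison $\tfrac{d}{dt}\|u^n\|_{H^m}^2+c\|u^n\|_{H^{m+1}}^2\le C(\|u^n\|_{H^m})\|u^n\|_{H^m}^2$. Both routes hinge on the same cancellation $(u\times\partial^\alpha\nabla u,\partial^\alpha\nabla u)=0$ and on controlling $\|\nabla u\|_{L^\infty}$ (for $m=2$ via Agmon/interpolation, exactly as you and the paper do through $\|\nabla\tilde u\|_{\dot H^{7/4}}$); your direct estimate is the standard quasilinear-parabolic argument and is arguably more economical here, while the paper's splitting buys a uniform presentation with the critical Besov proof of Theorem~\ref{Thm1.1}, where that decomposition is genuinely needed. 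Your compactness step and your uniqueness proof coincide with the paper's (Lemma~\ref{Lemma7}). One cosmetic remark: rather than expanding $[\partial^\alpha,u\times]\Delta u$ term-by-term and integrating by parts in each piece, it is cleaner to first use $u\times\Delta u=\mathrm{div}(u\times\nabla u)$ and commute with $\nabla u$, which immediately gives the single bound $\|\nabla u\|_{L^\infty}\|u\|_{H^m}\|u\|_{H^{m+1}}$ via Lemma~\ref{Lemma5}.
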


The routine process of Theorem \ref{Thm1.1} will be applied to the proof of this proposition, thus we only provide the details of a priori estimate and uniqueness. Before proceeding, let's first recall the following commutator estimate.
\begin{Lemma}\label{Lemma5}
For any nonnegative multi-index $\alpha$ whit $|\alpha|\leq m$ and $f,g\in H^{m}$, there holds
$$\left\|\partial_{x}^{\alpha}\left(fg\right)-f\partial_{x}^{\alpha}g\right\|_{L^{2}}\leq C_{s}\left(\left\|D_{x}^{1}f\right\|_{L^{\infty}}\left\|D_{x}^{m-1}g\right\|_{L^{2}}+\left\|g\right\|_{L^{\infty}}\left\|D_{x}^{m}f\right\|_{L^{2}}\right).$$
\end{Lemma}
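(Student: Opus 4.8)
The plan is to establish Lemma \ref{Lemma5} by the classical Moser / Kato--Ponce scheme: Leibniz expansion, Hölder's inequality, and Gagliardo--Nirenberg interpolation. I shall carry out the argument for $|\alpha| = m$, which is the only case used in the sequel (the energy estimates in $H^m$ below). Here and throughout, $\|D_x^j h\|_{L^p}$ denotes the sum of the $L^p$ norms of the order-$j$ derivatives of $h$. Expanding by the Leibniz formula,
$$\partial_x^\alpha(fg) - f\,\partial_x^\alpha g = \sum_{0 < \beta \le \alpha}\binom{\alpha}{\beta}\,\partial_x^\beta f\;\partial_x^{\alpha-\beta}g,$$
a finite sum in which each term satisfies $|\beta| = k$ with $1 \le k \le m$ and $|\alpha-\beta| = m-k \le m-1$. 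It therefore suffices to bound $\|D_x^k f\,D_x^{m-k}g\|_{L^2}$, for each $1 \le k \le m$, by the right-hand side of the lemma.

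Fix such a $k$. The key choice is to apply Hölder's inequality with the pair of exponents $p_k := \frac{2(m-1)}{k-1}$ and $q_k := \frac{2(m-1)}{m-k}$ (with the conventions $p_1 = q_m = +\infty$), which satisfy $\frac{1}{p_k} + \frac{1}{q_k} = \frac12$, giving
$$\|D_x^k f\,D_x^{m-k}g\|_{L^2} \le \|D_x^k f\|_{L^{p_k}}\,\|D_x^{m-k}g\|_{L^{q_k}}.$$
I then invoke the Gagliardo--Nirenberg inequalities, interpolating the order-$k$ derivative of $f$ between $D_x^1 f \in L^\infty$ and $D_x^m f \in L^2$, and the order-$(m-k)$ derivative of $g$ between $g \in L^\infty$ and $D_x^{m-1} g \in L^2$:
$$\|D_x^k f\|_{L^{p_k}} \lesssim \|D_x^1 f\|_{L^\infty}^{\frac{m-k}{m-1}}\,\|D_x^m f\|_{L^2}^{\frac{k-1}{m-1}}, \qquad \|D_x^{m-k}g\|_{L^{q_k}} \lesssim \|g\|_{L^\infty}^{\frac{k-1}{m-1}}\,\|D_x^{m-1}g\|_{L^2}^{\frac{m-k}{m-1}};$$
the exponents $p_k,q_k$ have been selected precisely so that these interpolation inequalities are scaling-consistent and involve no norms other than the four appearing in the statement (for $k=1$ and $k=m$ no interpolation is needed, and the intermediate cases $2 \le k \le m-1$ fall under the standard Gagliardo--Nirenberg admissibility conditions, the exceptional borderline situation not occurring since the relevant quantities are half-integers). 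Writing $a_k := \frac{k-1}{m-1}$, $b_k := \frac{m-k}{m-1} = 1 - a_k$ and multiplying, one obtains
$$\|D_x^k f\,D_x^{m-k}g\|_{L^2} \lesssim \bigl(\|D_x^1 f\|_{L^\infty}\|D_x^{m-1}g\|_{L^2}\bigr)^{b_k}\,\bigl(\|D_x^m f\|_{L^2}\|g\|_{L^\infty}\bigr)^{a_k}.$$

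Since $a_k + b_k = 1$ with $a_k,b_k \ge 0$, Young's inequality $X^{b_k}Y^{a_k} \le b_k X + a_k Y \le X + Y$ converts the product into the sum $\|D_x^1 f\|_{L^\infty}\|D_x^{m-1}g\|_{L^2} + \|D_x^m f\|_{L^2}\|g\|_{L^\infty}$. Summing over the finitely many multi-indices $\beta$ with $1 \le |\beta| \le m$ and absorbing the binomial coefficients into the constant completes the proof. The one point that requires genuine care is the bookkeeping in the previous paragraph — checking simultaneously that $(p_k,q_k)$ is Hölder-conjugate to $L^2$ and that the induced Gagliardo--Nirenberg weights collapse exactly onto the four target norms — but once the exponents $p_k = \tfrac{2(m-1)}{k-1}$, $q_k = \tfrac{2(m-1)}{m-k}$ are written down this is a direct computation, and no other technical obstacle arises.
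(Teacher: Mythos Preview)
The paper does not actually supply a proof of Lemma~\ref{Lemma5}; it is simply \emph{recalled} as a known commutator estimate (the classical Kato--Ponce/Moser inequality) immediately before the $H^m$ a~priori bounds, with neither argument nor explicit reference. Your proof --- Leibniz expansion, H\"older with the conjugate pair $p_k=\tfrac{2(m-1)}{k-1}$, $q_k=\tfrac{2(m-1)}{m-k}$, and Gagliardo--Nirenberg interpolation between $(D^1 f,L^\infty)$--$(D^m f,L^2)$ and $(g,L^\infty)$--$(D^{m-1}g,L^2)$ --- is precisely the standard textbook derivation of this estimate and is correct.

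One minor remark: the lemma is stated for all $|\alpha|\le m$, and in the paper it is applied after summing over $1\le|\alpha|\le m$ (see the estimate for $\tilde u$ in the $H^m$ energy argument). Your write-up treats only $|\alpha|=m$; the same computation with $m$ replaced by $\ell=|\alpha|$ handles the lower orders, and in the paper's application the resulting $\|D^{\ell-1}g\|_{L^2}$, $\|D^\ell f\|_{L^2}$ are in any case absorbed into the full $H^m$ norms on the right-hand side of the energy inequality, so no gap arises.
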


\begin{Lemma}[A priori estimate]\label{Lemma6}
Suppose $u$ is a smooth solution of the Cauchy problem \eqref{LLB} on $[0,T]\times\mathbb{R}^3$, and $u=u^L+\tilde{u}$ with
\begin{equation}\label{L1}
\begin{cases}
\partial_tu^L=\Delta u^L-\kappa u^L,\\
u^L|_{t=0}=u_0,
\end{cases}
\end{equation}
and
\begin{equation}\label{L}
\begin{cases}
\partial_t\tilde{u}=\Delta \tilde{u}-\kappa \tilde{u}+u\times\Delta u-\kappa\mu |u|^2u,\\
\tilde{u}|_{t=0}=0.
\end{cases}
\end{equation}
There exist positive constants $\sigma$ and $C$ such that if
\begin{align}\label{condition2}
&\sigma^2C\int_0^T\|u^L\|^{\frac{8(m-1)}{4m-7}}_{H^m}+\|\nabla u^L\|^{2}_{H^m}+\sigma^4dt\notag\\
&\quad+C\int_0^T\|u^L\|^{2}_{H^m}\|\nabla u^L\|^{2}_{H^m}+\|u^L\|^{6}_{H^m}dt<\sigma^2,
\end{align}
then we have
\begin{align}\label{est32}
&\|\tilde{u}\|^2_{L_T^\infty H^m}+\|\nabla\tilde{u}\|^2_{L_T^2 H^m}+\kappa\|\tilde{u}\|^2_{L_T^2 H^m}\leqslant\sigma^2,\\
&\|u\|^2_{L_T^\infty H^m}+\|\nabla u\|^2_{L_T^2 H^m}+\kappa\|u\|^2_{L_T^2 H^m}\leqslant \|u_0\|_{H^m}^2+\sigma^2.
\end{align}
\end{Lemma}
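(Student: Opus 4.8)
The plan is to run a standard energy estimate in $H^m$ on the equation \eqref{L} satisfied by $\tilde u$, closing the estimate by a bootstrap/continuation argument under the smallness condition \eqref{condition2}. First I would apply $\partial_x^\alpha$ for $|\alpha|\le m$ to \eqref{L}, pair with $\partial_x^\alpha\tilde u$ in $L^2$, and sum over $\alpha$ to obtain
\begin{align*}
\frac12\frac{d}{dt}\|\tilde u\|_{H^m}^2+\|\nabla\tilde u\|_{H^m}^2+\kappa\|\tilde u\|_{H^m}^2
&\le \left|\big(\partial_x^\alpha(u\times\Delta u),\partial_x^\alpha\tilde u\big)\right|
+\kappa\mu\left|\big(\partial_x^\alpha(|u|^2u),\partial_x^\alpha\tilde u\big)\right|.
\end{align*}
The quadratic term $u\times\Delta u$ is the delicate one: since it contains two derivatives, a naive bound loses too much regularity, so I would write $u=u^L+\tilde u$ and use the antisymmetry of the cross product together with the commutator estimate in Lemma \ref{Lemma5}. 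Concretely, $\partial_x^\alpha(u\times\Delta u)=u\times\partial_x^\alpha\Delta u+[\partial_x^\alpha,u\times]\Delta u$; the leading piece $\big(u\times\partial_x^\alpha\Delta\tilde u,\partial_x^\alpha\tilde u\big)$ is integrated by parts to move one derivative, producing $\big(\partial_x^\alpha\nabla\tilde u\times\nabla u+\cdots\big)$-type terms controlled by $\|\nabla u\|_{L^\infty}\|\nabla\tilde u\|_{H^m}\|\tilde u\|_{H^m}$ (the term with $u\times\partial_x^\alpha\nabla\tilde u$ paired against $\partial_x^\alpha\nabla\tilde u$ vanishes by antisymmetry), while the commutator is handled by Lemma \ref{Lemma5}. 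The contributions involving $u^L$ are placed in the right member and absorbed using the heat estimates for $u^L$ and Young's inequality.

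The key steps, in order, are: (i) derive the differential inequality above; (ii) split every occurrence of $u$ into $u^L+\tilde u$, and classify the resulting terms into those that are at least quadratic in $\tilde u$ (to be absorbed by the dissipation $\|\nabla\tilde u\|_{H^m}^2$ via Young once $\|\tilde u\|_{H^m}$ is known small, i.e. $\le\sigma$), those linear in $\tilde u$ with $u^L$-coefficients (treated by Young's inequality, splitting into $\tfrac12\|\nabla\tilde u\|_{H^m}^2$ plus $\|\tilde u\|_{H^m}^2$ times a time-integrable weight built from $u^L$), and the purely $u^L$ forcing terms; (iii) use the Gagliardo–Nirenberg/interpolation inequalities to convert $L^\infty$ norms of first derivatives into $H^m$ norms — this is where the exponent $\tfrac{8(m-1)}{4m-7}$ in \eqref{condition2} comes from, matching the interpolation weight needed to split $\|\nabla u^L\|_{L^\infty}\|\nabla\tilde u\|_{H^m}\|\tilde u\|_{H^m}$; (iv) arrive at
\begin{align*}
\frac{d}{dt}\|\tilde u\|_{H^m}^2+\|\nabla\tilde u\|_{H^m}^2+\kappa\|\tilde u\|_{H^m}^2
\le C\Phi(t)+C\Psi(t)\|\tilde u\|_{H^m}^2,
\end{align*}
where $\Phi$ collects the $u^L$-forcing (its time integral is the second line of \eqref{condition2}) and $\Psi$ is the $u^L$-weight (integrating to the first line); (v) assume the a priori bound $\|\tilde u\|_{H^m}\le\sigma$, close the nonlinear self-interaction terms, apply Grönwall, and verify that \eqref{condition2} forces the right-hand side below $\sigma^2$, so a continuation/bootstrap argument upgrades the assumed bound to the conclusion \eqref{est32}; (vi) recover the bound on $u$ from $u=u^L+\tilde u$ and the heat estimates $\|u^L\|_{L_T^\infty H^m}^2+\|\nabla u^L\|_{L_T^2 H^m}^2+\kappa\|u^L\|_{L_T^2 H^m}^2\lesssim\|u_0\|_{H^m}^2$ together with the triangle inequality.

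The cubic term $|u|^2u$ is comparatively easy: using that $H^m(\mathbb{R}^3)$ with $m\ge 2$ is a Banach algebra, $\||u|^2u\|_{H^m}\lesssim\|u\|_{H^m}^3$, and after the $u=u^L+\tilde u$ split each piece is either at least quadratic in $\tilde u$ (absorbed) or a $u^L$-forcing term (giving the $\|u^L\|_{H^m}^6$ contribution in \eqref{condition2}) or linear in $\tilde u$ weighted by $\|u^L\|_{H^m}^2$ (giving, via Young, a $\|\tilde u\|_{H^m}^2$ term with weight $\|u^L\|_{H^m}^4$ or, combined with the quadratic dissipation, absorbed). I expect the main obstacle to be step (iii): keeping precise track of the interpolation exponents so that every $u^L$-dependent coefficient multiplying $\|\tilde u\|_{H^m}^2$ is genuinely time-integrable on $[0,T]$ and produces exactly the weights appearing in \eqref{condition2}, and making sure that after all applications of Young's inequality the coefficient of $\|\nabla\tilde u\|_{H^m}^2$ on the right is strictly less than $1$ so the dissipation on the left survives. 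The antisymmetry cancellation for the top-order term $u\times\partial_x^\alpha\Delta\tilde u$ and the correct use of Lemma \ref{Lemma5} for the commutator remainder are the structural points that make the scheme work, analogous to the role of Lemma \ref{Lemma4} in the Besov setting.
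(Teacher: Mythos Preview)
Your plan is essentially the paper's: integrate by parts once in $(\partial^\alpha(u\times\Delta u),\partial^\alpha\tilde u)$, use the antisymmetry so that $(u\times\partial^\alpha\nabla\tilde u,\partial^\alpha\nabla\tilde u)=0$, bound the commutator $[\partial^\alpha,u\times]\nabla\tilde u$ by Lemma~\ref{Lemma5}, and close by a bootstrap under $\|\tilde u\|_{H^m}\le\sigma$.

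One correction to step~(iii): the exponent $\tfrac{8(m-1)}{4m-7}$ does \emph{not} come from $\|\nabla u^L\|_{L^\infty}\|\nabla\tilde u\|_{H^m}\|\tilde u\|_{H^m}$ --- that term is harmless, since $\|\nabla u^L\|_{L^\infty}\lesssim\|\nabla u^L\|_{H^m}$ and Young gives the weight $\|\nabla u^L\|_{H^m}^2$ already present in \eqref{condition2}. The exponent actually arises from the \emph{second} output of Lemma~\ref{Lemma5}, namely $\|\nabla\tilde u\|_{L^\infty}\|u\|_{H^m}$, which after splitting contributes
\[
\|\nabla\tilde u\|_{H^m}\,\|\nabla\tilde u\|_{L^\infty}\,\|u^L\|_{H^m}.
\]
This carries \emph{two} factors of $\nabla\tilde u$; a crude bound $\|\nabla\tilde u\|_{L^\infty}\lesssim\|\nabla\tilde u\|_{H^m}$ would give $\|\nabla\tilde u\|_{H^m}^2\|u^L\|_{H^m}$, which cannot be absorbed into the dissipation for large $\|u^L\|_{H^m}$. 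The paper therefore interpolates on $\nabla\tilde u$ (not on $\nabla u^L$):
\[
\|\nabla\tilde u\|_{L^\infty}\lesssim\|\nabla\tilde u\|_{\dot{H}^{7/4}}\lesssim\|\nabla\tilde u\|_{H^m}^{\frac{3}{4(m-1)}}\|\tilde u\|_{H^m}^{\frac{4m-7}{4(m-1)}},
\]
extracting a $\|\tilde u\|_{H^m}$ factor; Young's inequality then yields $\tfrac14\|\nabla\tilde u\|_{H^m}^2+C\|\tilde u\|_{H^m}^2\|u^L\|_{H^m}^{\frac{8(m-1)}{4m-7}}$, which is the true origin of the exponent. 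A minor further point: the paper closes by direct integration under the bootstrap hypothesis (replacing every $\|\tilde u\|_{H^m}^2$ on the right by $\sigma^2$) rather than by Gr\"onwall; this is why \eqref{condition2} is linear in the $u^L$-weights rather than exponential.
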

\begin{proof}
We have the following estimate for the system \eqref{L1}
\begin{align}\label{est27}
\|u^L\|_{L^\infty_TH^m}^2+2\|\nabla u^L\|_{L^2_TH^m}^2+2\kappa\|u^L\|_{L^2_TH^m}^2=\|u_0\|_{H^m}^2.
\end{align}
For the system \eqref{L}, taking inner product with $\tilde{u}$, one has
\begin{align}\label{est28}
&\frac{1}{2}\frac{d}{dt}\|\tilde{u}\|^2_{L^2}+\|\nabla\tilde{u}\|^2_{L^2}+\kappa\|\tilde{u}\|^2_{L^2}\notag\\
&\lesssim\|\nabla\tilde{u}\|_{L^2}\|u\|_{L^\infty}\|\nabla u^L\|_{L^2}+\|\tilde{u}\|_{L^\infty}\|u\|_{L^\infty}\|u\|^2_{L^2}.
\end{align}
Assuming $1\leqslant|\alpha|\leqslant m$, applying $\partial^\alpha$ on \eqref{L} and then taking inner product with $\partial^\alpha \tilde{u}$, we have
\begin{align}\label{est29}
&\frac{1}{2}\frac{d}{dt}\|\partial^\alpha\tilde{u}\|^2_{L^2}+\|\nabla\partial^\alpha\tilde{u}\|^2_{L^2}+\kappa\|\partial^\alpha\tilde{u}\|^2_{L^2}\notag\\
&=-\left(\partial^\alpha(u\times\nabla \tilde{u}),\partial^\alpha\nabla \tilde{u}\right)-\left(\partial^\alpha(u\times\nabla u^L),\partial^\alpha\nabla \tilde{u}\right)-\kappa\mu\left(\partial^\alpha(|u|^2u),\partial^\alpha\tilde{u}\right)\notag\\
&=-\left([\partial^\alpha,u\times]\nabla \tilde{u},\partial^\alpha\nabla \tilde{u}\right)-\left(\partial^\alpha(u\times\nabla u^L),\partial^\alpha\nabla \tilde{u}\right)-\kappa\mu\left(\partial^\alpha(|u|^2u),\partial^\alpha \tilde{u}\right)\notag\\
&\lesssim\|\nabla\tilde{u}\|_{H^m}\left(\|\nabla u\|_{L^\infty}\|\tilde{u}\|_{H^m}+\|\nabla\tilde{u}\|_{L^\infty}\|u\|_{H^m}+\|u\|_{L^\infty}\|\nabla u^L\|_{H^m}+\|\nabla u^L\|_{L^\infty}\|u\|_{H^m}\right)\notag\\
&\quad+\|\tilde{u}\|_{H^m}\|u\|_{H^m}\|u\|^2_{L^\infty},
\end{align}
where we have used Lemma \ref{Lemma5} and \eqref{algebra} in the last inequality. Taking $\sum\limits_{1\leqslant|\alpha|\leqslant m}$ on \eqref{est29}, combining \eqref{est28} and the Sobolev embedding, we obtain
\begin{align}\label{est30}
&\frac{1}{2}\frac{d}{dt}\|\tilde{u}\|^2_{H^m}+\|\nabla\tilde{u}\|^2_{H^m}+\kappa\|\tilde{u}\|^2_{H^m}\notag\\
&\lesssim \|\nabla\tilde{u}\|_{H^m}\|\nabla\tilde{u}\|_{\dot{H}^{\frac{7}{4}}}\|u^L\|_{H^m}+\|\tilde{u}\|_{H^m}\|u\|^3_{H^m}\notag\\
&\quad+\|\nabla\tilde{u}\|_{H^m}\left(\|\nabla u\|_{H^m}\|\tilde{u}\|_{H^m}+\|\nabla \tilde{u}\|_{H^m}\|\tilde{u}\|_{H^m}+\|u\|_{H^m}\|\nabla u^L\|_{H^m}\right)\notag\\
&\leqslant\frac{1}{4}(\|\nabla\tilde{u}\|^2_{H^m}+\kappa\|\tilde{u}\|^2_{H^m})+C\|\tilde{u}\|^2_{H^m}\left(\|u^L\|^{\frac{8(m-1)}{4m-7}}_{H^m}+\|\nabla\tilde{u}\|^2_{H^m}+\|\nabla u^L\|^2_{H^m}\right)\notag\\
&\quad+C\left(\|\tilde{u}\|^6_{H^m}+\|u^L\|^6_{H^m}+\|u^L\|^2_{H^m}\|\nabla u^L\|^2_{H^m}\right),
\end{align}
where we have used the interpolation inequality:
$$\|\nabla\tilde{u}\|_{\dot{H}^{\frac{7}{4}}}\lesssim \|\nabla\tilde{u}\|^{\frac{3}{4(m-1)}}_{H^m}\|\nabla\tilde{u}\|^{\frac{4m-7}{4(m-1)}}_{H^1}
\lesssim\|\nabla\tilde{u}\|^{\frac{3}{4(m-1)}}_{H^m}\|\tilde{u}\|^{\frac{4m-7}{4(m-1)}}_{H^m}.$$
There exists a positive constant $\sigma$ with $C\sigma^2<\frac{1}{4}$. Suppose that $\|\tilde{u}(t)\|^2_{H^m}\leqslant\sigma^2$ for all $0<t<T$, then we have
\begin{align}\label{est31}
&\|\tilde{u}(t)\|^2_{H^m}+\int_0^T\|\nabla\tilde{u}\|^2_{H^m}+\kappa\|\tilde{u}\|^2_{H^m}dt\notag\\
&\leqslant\sigma^2C\int_0^T\|u^L\|^{\frac{8(m-1)}{4m-7}}_{H^m}+\|\nabla u^L\|^{2}_{H^m}+\sigma^4dt\notag\\
&\quad+C\int_0^T\|u^L\|^{2}_{H^m}\|\nabla u^L\|^{2}_{H^m}+\|u^L\|^{6}_{H^m}dt\notag\\
&<\sigma^2.
\end{align}
Bootstrap principle implies the conclusion of Lemma \ref{Lemma6}.
\end{proof}
\begin{Lemma}[Uniqueness]\label{Lemma7}
Suppose $u_i$ ($i=1,2$) are solutions of the Cauchy problem \eqref{LLB} on $[0,T]\times\mathbb{R}^3$ with the same initial data, and such that
\begin{align*}
u_i\in\mathcal{C}([0,T];H^m);\quad \nabla u_i\in L^2([0,T],H^m),
\end{align*}
then $u_1=u_2$.
\end{Lemma}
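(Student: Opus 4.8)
\emph{The plan.} Uniqueness follows from an $L^2$ energy estimate on $\delta u:=u_1-u_2$, which solves the same linearized system as in the uniqueness part of Theorem \ref{Thm1.2}, namely \eqref{LLB3} (with the two $H^m$ solutions $u_1,u_2$ in place of the earlier ones). Taking the $L^2$ inner product of \eqref{LLB3} with $\delta u$ and integrating by parts in the Laplacian term gives
\begin{equation*}
\tfrac12\tfrac{d}{dt}\|\delta u\|_{L^2}^2+\|\nabla\delta u\|_{L^2}^2+\kappa\|\delta u\|_{L^2}^2=\big(u_2\times\Delta\delta u,\delta u\big)-\kappa\mu\big(\delta u(u_1+u_2)u_1,\delta u\big)-\kappa\mu\big(|u_2|^2\delta u,\delta u\big),
\end{equation*}
where the contribution of $\delta u\times\Delta u_1$ has already been dropped because $\delta u\times\Delta u_1$ is pointwise orthogonal to $\delta u$.

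\emph{The delicate term.} The only place a derivative could be lost is $\big(u_2\times\Delta\delta u,\delta u\big)$. Writing $u_2\times\Delta\delta u=\partial_k(u_2\times\partial_k\delta u)-\partial_k u_2\times\partial_k\delta u$, integrating the first piece by parts and using the orthogonality $(u_2\times\partial_k\delta u)\cdot\partial_k\delta u=0$, one obtains
\begin{equation*}
\big(u_2\times\Delta\delta u,\delta u\big)=-\big(\partial_k u_2\times\partial_k\delta u,\delta u\big).
\end{equation*}
Since $m\geqslant2$ forces $\nabla u_2\in L^2\big([0,T];H^m\big)\hookrightarrow L^2\big([0,T];L^\infty\big)$, this term is bounded by $\|\nabla u_2\|_{L^\infty}\|\nabla\delta u\|_{L^2}\|\delta u\|_{L^2}\leqslant\tfrac12\|\nabla\delta u\|_{L^2}^2+C\|\nabla u_2\|_{L^\infty}^2\|\delta u\|_{L^2}^2$, with $\|\nabla u_2\|_{L^\infty}^2\in L^1(0,T)$; and the two cubic terms are dominated by $C\big(\|u_1\|_{H^m}^2+\|u_2\|_{H^m}^2\big)\|\delta u\|_{L^2}^2$, whose coefficient is bounded on $[0,T]$ by the continuity $u_i\in\mathcal C\big([0,T];H^m\big)$.

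\emph{Conclusion and main obstacle.} Absorbing $\tfrac12\|\nabla\delta u\|_{L^2}^2$ on the left leaves $\tfrac{d}{dt}\|\delta u\|_{L^2}^2\leqslant g(t)\|\delta u\|_{L^2}^2$ with $g:=C\big(\|\nabla u_2\|_{L^\infty}^2+\|u_1\|_{H^m}^2+\|u_2\|_{H^m}^2\big)\in L^1(0,T)$; since $\delta u(0)=0$, Gronwall's inequality yields $\delta u\equiv0$ on $[0,T]$, i.e.\ $u_1=u_2$. The main obstacle is exactly the term $u_2\times\Delta\delta u$: a naive estimate loses two derivatives on the low-regularity unknown $\delta u$, but transferring one derivative onto the smooth factor $u_2$ by integration by parts—while the algebraic identity $a\times b\perp b$ annihilates the remaining top-order piece—reduces it to something the parabolic dissipation $\|\nabla\delta u\|_{L^2}^2$ can absorb, leaving only an $L^1_t$ weight for Gronwall. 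In particular no commutator estimate of the strength of Lemma \ref{Lemma4} or Lemma \ref{Lemma5} is needed here, because the energy is taken at the base level $L^2$ rather than at $\dot B^{3/2}_{2,1}$ or $H^m$.
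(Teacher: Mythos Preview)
Your proof is correct and follows essentially the same approach as the paper: an $L^2$ energy estimate on $\delta u$, dropping $(\delta u\times\Delta u_1,\delta u)$ by orthogonality, reducing $(u_2\times\Delta\delta u,\delta u)$ via integration by parts and the identity $(u_2\times\partial_k\delta u)\cdot\partial_k\delta u=0$ to a term controlled by $\|\nabla u_2\|_{L^\infty}\|\nabla\delta u\|_{L^2}\|\delta u\|_{L^2}$, absorbing $\tfrac12\|\nabla\delta u\|_{L^2}^2$, and closing with Gronwall using the same weight $\|\nabla u_2\|_{H^m}^2+\|u_1\|_{H^m}^2+\|u_2\|_{H^m}^2$. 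Your write-up is in fact slightly more explicit than the paper's about why the top-order piece vanishes.
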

\begin{proof}
Suppose $\delta u:=u_1-u_2$, then $\delta u$ satisfies the equation \eqref{LLB3}. Letting \eqref{LLB3} take inner product with $\delta u$, integrating by parts and using the Sobolev embedding, we have
\begin{align}\label{est33}
&\frac{1}{2}\frac{d}{dt}\|\delta u\|^2_{L^2}+\|\nabla\delta u\|^2_{L^2}+\kappa\|\delta u\|^2_{L^2}\notag\\
&=(u_2\times\Delta \delta u,\delta u)-\kappa\mu\int_{\mathbb{R}^3}|\delta u|^2(u_1+u_2)u_1+|u_2|^2|\delta u|^2dx\notag\\
&\lesssim\|\nabla\delta u\|_{L^2}\|\delta u\|_{L^2}\|\nabla u_2\|_{L^\infty}+\|\delta u\|_{L^2}^2\|u_1\|_{L^\infty}\|u_1+u_2\|_{L^\infty}\notag\\
&\leqslant\frac{1}{2}\|\nabla\delta u\|_{L^2}^2+C\|\delta u\|_{L^2}^2\left(\|\nabla u_2\|_{H^m}^2+\|u_1\|_{H^m}^2+\|u_2\|_{H^m}^2\right),
\end{align}
thus from Gronwall's inequality we conclude that $\delta u=0$ on $[0,T]$.
\end{proof}

Now, we are ready to prove Theorem \ref{Thm1.3}. Suppose $u$ is the unique solution constructed in Proposition \ref{proposition3.3}.
Applying $\dot{\Delta}_j$ on \eqref{LLB1}, then taking inner products with $\dot{\Delta}_ju$, we have
\begin{align*}
&\frac{1}{2}\frac{d}{dt}\|\dot{\Delta}_ju\|^2_{L^2}+C_{1}2^{2j}\|\dot{\Delta}_ju\|^2_{L^2}+\kappa\|\dot{\Delta}_ju\|^2_{L^2}\\
&\leqslant \int_{\mathbb{R}^3}\dot{\Delta}_j(u\times\Delta u)\dot{\Delta}_ju-\kappa\mu\dot{\Delta}_j(|u|^2u)\dot{\Delta}_judx.
\end{align*}
Taking $\sum\limits_{j\in \mathbb{Z}}2^{2jm}$ on both sides, we get
\begin{align}\label{est9}
&\frac{1}{2}\frac{d}{dt}\|u\|^2_{\dot{H}^m}+C_{1}\|u\|^2_{\dot{H}^{m+1}}+\kappa\|u\|^2_{\dot{H}^m}\notag\\
&\leqslant \sum\limits_{j\in \mathbb{Z}}2^{2jm}\int_{\mathbb{R}^3}\dot{\Delta}_j(u\times\Delta u)\dot{\Delta}_ju-\kappa\mu\dot{\Delta}_j(|u|^2u)\dot{\Delta}_judx\notag\\
&:=I_1+I_2.
\end{align}
For the second term, from \eqref{algebra} and the Sobolev embedding, one has
\begin{align}\label{est10}
&I_2\leqslant \|u\|_{\dot{H}^m}\||u|^2u\|_{\dot{H}^m}\notag\\
&\lesssim \|u\|_{\dot{H}^m}\left(\||u|^2\|_{L^\infty}\|u\|_{\dot{H}^m}+\||u|^2\|_{\dot{H}^m}\|u\|_{L^\infty}\right)\notag\\
&\lesssim \|u\|^2_{\dot{H}^m}\||u|^2\|_{\dot{B}^{\frac{3}{p}}_{p,1}}+\|u\|^2_{\dot{H}^m}\|u\|^2_{\dot{B}^{\frac{3}{p}}_{p,1}}\notag\\
&\lesssim \|u\|^2_{\dot{H}^m}\|u\|^2_{\dot{B}^{\frac{3}{p}}_{p,1}}.
\end{align}
In view of Bony decomposition in Section \ref{sec2}, $I_1$ can be recast as
\begin{align}\label{est11}
I_1&=-\sum\limits_{j\in \mathbb{Z}}2^{2jm}\int_{\mathbb{R}^3}\dot{\Delta}_j(u\times\nabla u)\dot{\Delta}_j\nabla udx\notag\\
&=-\sum\limits_{j\in \mathbb{Z}}\sum\limits_{|j-j'|\leqslant1}2^{2jm}\int_{\mathbb{R}^3}\dot{\Delta}_j\left(\dot{S}_{j'-1}u\times\dot{\Delta}_{j'}\nabla u\right)\dot{\Delta}_j\nabla udx\notag\\
&\quad-\sum\limits_{j\in \mathbb{Z}}\sum\limits_{|j-j'|\leqslant1}2^{2jm}\int_{\mathbb{R}^3}\dot{\Delta}_j\left(\dot{\Delta}_{j'}u\times\dot{S}_{j'-1}\nabla u\right)\dot{\Delta}_j\nabla udx\notag\\
&\quad-\sum\limits_{j\in \mathbb{Z}}\sum\limits_{j\leqslant j'}2^{2jm}\int_{\mathbb{R}^3}\dot{\Delta}_j\left(\sum\limits_{|\nu|\leqslant1}\dot{\Delta}_{j'}u\times\dot{\Delta}_{j'-\nu}\nabla u\right)\dot{\Delta}_j\nabla udx\notag\\
&:=I_{11}+I_{12}+I_{13}.
\end{align}
Firstly, we deal with the term $I_{11}$ as
\begin{align}\label{est12}
I_{11}&=-\sum\limits_{j\in \mathbb{Z}}\sum\limits_{|j-j'|\leqslant1}2^{2jm}\int_{\mathbb{R}^3}[\dot{\Delta}_j,\dot{S}_{j'-1}u\times]\dot{\Delta}_{j'}\nabla u\dot{\Delta}_j\nabla udx\notag\\
&\quad-\sum\limits_{j\in \mathbb{Z}}\sum\limits_{|j-j'|\leqslant1}2^{2jm}\int_{\mathbb{R}^3}\dot{S}_{j-1}u\times\dot{\Delta}_{j}\dot{\Delta}_{j'}\nabla u\dot{\Delta}_j\nabla udx\notag\\
&\quad+\sum\limits_{j\in \mathbb{Z}}\sum\limits_{|j-j'|\leqslant1}2^{2jm}\int_{\mathbb{R}^3}\left(\dot{S}_{j-1}-\dot{S}_{j'-1}\right)u\times\dot{\Delta}_{j}\dot{\Delta}_{j'}\nabla u\dot{\Delta}_j\nabla udx\notag\\
&:=I_{111}+I_{112}+I_{113}.
\end{align}
Noted that $I_{112}=0$. In view of the commutator estimate \eqref{commu estimate1}, Bernstein's inequality and Young's inequality, we have
\begin{align}\label{est13}
I_{111}&\lesssim\sum\limits_{j\in \mathbb{Z}}\sum\limits_{|j-j'|\leqslant1}2^{2jm}2^{-j}\|\nabla\dot{S}_{j'-1}u\|_{L^\infty}\|\dot{\Delta}_{j'}\nabla u\|_{L^2}\|\dot{\Delta}_{j}\nabla u\|_{L^2}\notag\\
&\lesssim\sum\limits_{j\in \mathbb{Z}}2^{j(2m+1)}\|\dot{\Delta}_{j} u\|^2_{L^2}\sum\limits_{j'\leqslant j}2^{j'}\|\dot{\Delta}_{j'} u\|_{L^\infty}\notag\\
&\lesssim\sum\limits_{j\in \mathbb{Z}}\left(2^{j(m+1)}\|\dot{\Delta}_{j} u\|_{L^2}\right)^\delta\left(2^{jm}\|\dot{\Delta}_{j} u\|_{L^2}\right)^{2-\delta}
\sum\limits_{j'\leqslant j}2^{(j-j')(1-\delta)}2^{j'(2-\delta)}\|\dot{\Delta}_{j'} u\|_{L^\infty}\notag\\
&\lesssim\|u\|^{\delta}_{\dot{H}^{m+1}}\|u\|_{\dot{H}^{m}}^{2-\delta}\|u\|_{\dot{B}^{2-\delta}_{\infty,\infty}}\notag\\
&\leqslant\frac{C_1}{8}\|u\|^{2}_{\dot{H}^{m+1}}+C\|u\|_{\dot{H}^{m}}^{2}\|u\|^{\frac{2}{2-\delta}}_{\dot{B}^{2-\delta}_{\infty,\infty}},
\end{align}
for any $\delta\in(1,2)$.
For the term $I_{113}$, by Bernstein's inequality and H\"{o}lder's inequality, we have
\begin{align}\label{est14}
I_{113}&\lesssim\sum\limits_{j\in \mathbb{Z}}2^{j(2m+2)}\|\dot{\Delta}_{j} u\|^2_{L^2}\|\dot{\Delta}_{j} u\|_{L^\infty}\notag\\
&\lesssim\sum\limits_{j\in \mathbb{Z}}\left(2^{j(m+1)}\|\dot{\Delta}_{j} u\|_{L^2}\right)^\delta\left(2^{jm}\|\dot{\Delta}_{j} u\|_{L^2}\right)^{2-\delta}
2^{j(2-\delta)}\|\dot{\Delta}_{j} u\|_{L^\infty}\notag\\
&\lesssim\|u\|^{\delta}_{\dot{H}^{m+1}}\|u\|_{\dot{H}^{m}}^{2-\delta}\|u\|_{\dot{B}^{2-\delta}_{\infty,\infty}}\notag\\
&\leqslant\frac{C_1}{8}\|u\|^{2}_{\dot{H}^{m+1}}+C\|u\|_{\dot{H}^{m}}^{2}\|u\|^{\frac{2}{2-\delta}}_{\dot{B}^{2-\delta}_{\infty,\infty}}.
\end{align}
The estimate of the term $I_{12}$ is similar to that of $I_{111}$, so from \eqref{est12}--\eqref{est14}, we have
\begin{align}\label{est16}
I_{11}+I_{12}&\leqslant\frac{3C_1}{8}\|u\|^{2}_{\dot{H}^{m+1}}+C\|u\|_{\dot{H}^{m}}^{2}\|u\|^{\frac{2}{2-\delta}}_{\dot{B}^{2-\delta}_{\infty,\infty}}.
\end{align}
Finally, we deal with the term $I_{13}$. Suppose that $\delta_1\in(0,1)$, $\delta_2\in(0,1)$ and $\delta=\delta_1+\delta_2$, a similar argument in \eqref{est13} implies the following estimate:
\begin{align}\label{est15}
I_{13}&\lesssim\sum\limits_{j\in \mathbb{Z}}2^{j(2m+1)}\|\dot{\Delta}_{j} u\|_{L^2}\sum\limits_{j\leqslant j'}2^{j'}\|\dot{\Delta}_{j'} u\|_{L^2}\|\dot{\Delta}_{j'} u\|_{L^\infty}\notag\\
&\lesssim\sum\limits_{j\in \mathbb{Z}}\left(2^{j(m+1)}\|\dot{\Delta}_{j} u\|_{L^2}\right)^{\delta_1}\left(2^{jm}\|\dot{\Delta}_{j} u\|_{L^2}\right)^{1-\delta_1}
\sum\limits_{j\leqslant j'}2^{(j-j')(m+1-\delta_1)}\notag\\
&\quad\left(2^{j'(m+1)}\|\dot{\Delta}_{j'} u\|_{L^2}\right)^{\delta_2}\left(2^{j'm}\|\dot{\Delta}_{j'} u\|_{L^2}\right)^{1-\delta_2}2^{j'(2-\delta_1-\delta_2)}\|\dot{\Delta}_{j'} u\|_{L^\infty}\notag\\
&\lesssim\|u\|^{\delta_1+\delta_2}_{\dot{H}^{m+1}}\|u\|_{\dot{H}^{m}}^{2-\delta_1-\delta_2}\|u\|_{\dot{B}^{2-\delta_1-\delta_2}_{\infty,\infty}}\notag\\
&\leqslant\frac{C_1}{8}\|u\|^{2}_{\dot{H}^{m+1}}+C\|u\|_{\dot{H}^{m}}^{2}\|u\|^{\frac{2}{2-\delta}}_{\dot{B}^{2-\delta}_{\infty,\infty}}.
\end{align}
From \eqref{est10}, \eqref{est11}, \eqref{est16} and \eqref{est15}, one has
\begin{align}\label{est17}
I_1+I_2\leqslant\frac{C_1}{2}\|u\|^{2}_{\dot{H}^{m+1}}+C\|u\|_{\dot{H}^{m}}^{2}\left(\|u\|^2_{\dot{B}^{\frac{3}{p}}_{p,1}}
+\|u\|^{\frac{2}{2-\delta}}_{\dot{B}^{2-\delta}_{\infty,\infty}}\right).
\end{align}
Thus, combining \eqref{est9}, \eqref{est17} with the following conservation law
\begin{align}\label{conservation law}
\frac{1}{2}\frac{d}{dt}\|u\|^2_{L^2}+\|\nabla u\|^2_{L^2}+\kappa\|u\|^2_{L^2}+\kappa\mu\|u\|^4_{L^4}=0,
\end{align}
we conclude that
\begin{align}\label{est18}
&\frac{1}{2}\frac{d}{dt}\|u\|^2_{H^m}+C_{2}\|\nabla u\|^2_{H^{m}}+\kappa\|u\|^2_{H^m}+\kappa\mu\|u\|^4_{L^4}\notag\\
&\leqslant C\|u\|_{\dot{H}^{m}}^{2}\left(\|u\|^2_{\dot{B}^{\frac{3}{p}}_{p,1}}
+\|u\|^{\frac{2}{2-\delta}}_{\dot{B}^{2-\delta}_{\infty,\infty}}\right),
\end{align}
for some $C_2>0$. By using Gronwall's inequality, we have
\begin{align}\label{est19}
&\|u(t)\|^2_{H^m}+\int_0^tC_{2}\|\nabla u(\tau)\|^2_{H^{m}}+\kappa\|u(\tau)\|^2_{H^m}+\kappa\mu\|u(\tau)\|^4_{L^4}d\tau\notag\\
&\leqslant \|u_0\|_{\dot{H}^{m}}^{2}e^{C\int_0^t\|u(\tau)\|^2_{\dot{B}^{\frac{3}{p}}_{p,1}}
+\|u(\tau)\|^{\frac{2}{2-\delta}}_{\dot{B}^{2-\delta}_{\infty,\infty}}d\tau},
\end{align}
which implies the desired results of  Theorem \ref{Thm1.3}.

\subsection{Proof of Theorem \ref{Thm1.4}}\label{sec3.5}
In this subsection, we will prove Theorem \ref{Thm1.4} through the blow--up criterion established in Theorem \ref{Thm1.3}. Taking $\dot{\Delta}_{j}$ on \eqref{LLB}, we get
\begin{align}\label{Bony equation}
\partial_t\dot{\Delta}_{j}u=\Delta \dot{\Delta}_{j}u-\kappa \dot{\Delta}_{j}u+\dot{\Delta}_{j}(u\times \Delta u)-\kappa\mu\dot{\Delta}_{j}(|u|^2u).
\end{align}
Letting \eqref{Bony equation} take inner product with $p|\dot{\Delta}_{j}u|^{p-2} \dot{\Delta}_{j}u$, one has
\begin{align}\label{est34}
&\frac{d}{dt}\|\dot{\Delta}_{j}u\|_{L^p}^p-p\int_{\mathbb{R}^3}|\dot{\Delta}_{j}u|^{p-2} \dot{\Delta}_{j}u\Delta \dot{\Delta}_{j}udx+\kappa p\|\dot{\Delta}_{j}u\|_{L^p}^p\notag\\
&=p\int_{\mathbb{R}^3}|\dot{\Delta}_{j}u|^{p-2} \dot{\Delta}_{j}u(\dot{\Delta}_{j}(u\times \Delta u)-\kappa\mu\dot{\Delta}_{j}(|u|^2u))dx.
\end{align}
Plugging \eqref{new Bernstein ineq} into \eqref{est34}, we have
\begin{align}
&\frac{d}{dt}\|\dot{\Delta}_{j}u\|_{L^p}^p+\frac{p-1}{p}c_02^{2j}\|\dot{\Delta}_{j}u\|_{L^p}^p+\kappa p\|\dot{\Delta}_{j}u\|_{L^p}^p\notag\\
&\leqslant p\|\dot{\Delta}_{j}u\|_{L^p}^{p-1}(\|\dot{\Delta}_{j}(u\times \Delta u)\|_{L^p}+\kappa\mu\|\dot{\Delta}_{j}(|u|^2u)\|_{L^p}).
\end{align}
Letting $h_j^p=\|\dot{\Delta}_{j}u\|_{L^p}^p+\delta^p$, and then taking $\delta$ tends to $0$, we infer that
\begin{align}\label{est25}
&\frac{d}{dt}\|\dot{\Delta}_{j}u\|_{L^p}+\frac{p-1}{p^2}c_02^{2j}\|\dot{\Delta}_{j}u\|_{L^p}+\kappa \|\dot{\Delta}_{j}u\|_{L^p}\notag\\
&\leqslant\|\dot{\Delta}_{j}(u\times \Delta u)\|_{L^p}+\kappa\mu\|\dot{\Delta}_{j}(|u|^2u)\|_{L^p}.
\end{align}
Then taking $\sum\limits_{j\in \mathbb{Z}}2^{j\frac{3}{p}}$ on both sides of \eqref{est25},  we obtain that from \eqref{product est}
\begin{align}\label{est1}
\frac{d}{dt}\|u\|_{\dot{B}_{p,1}^{\frac{3}{p}}}+C_3\|u\|_{\dot{B}_{p,1}^{\frac{3}{p}+2}}+\kappa \|u\|_{\dot{B}_{p,1}^{\frac{3}{p}}}
&\leqslant  \|u\times \Delta u\|_{\dot{B}_{p,1}^{\frac{3}{p}}}+\kappa\mu\|\dot{\Delta}_{j}(|u|^2u)\|_{\dot{B}_{p,1}^{\frac{3}{p}}}\notag\\
&\leqslant C(\|u\|_{\dot{B}_{p,1}^{\frac{3}{p}}}\|u\|_{\dot{B}_{p,1}^{\frac{3}{p}+2}}+\|u\|_{\dot{B}_{p,1}^{\frac{3}{p}}}^3),
\end{align}
where $C_3=\frac{p-1}{p^2}c_0$. Choosing $\eta$ small enough such that $\eta\leqslant\frac{C_3}{2C}$ and $C\eta^2<\frac{\kappa}{2}$, if $\|u\|_{L^\infty_{T}\dot{B}_{p,1}^{\frac{3}{p}}}\leqslant\eta$, and applying \eqref{est1}, one can deduce that
$$\|u\|_{L^\infty_{T}\dot{B}_{p,1}^{\frac{3}{p}}}+\frac{C_3}{2}\|u\|_{L^1_{T}\dot{B}_{p,1}^{\frac{3}{p}+2}}+\frac{\kappa}{2}\|u\|_{L^1_{T}\dot{B}_{p,1}^{\frac{3}{p}}}
\leqslant\|u_0\|_{\dot{B}_{p,1}^{\frac{3}{p}}}<\eta.
$$
By virtue of the bootstrap principle, we conclude that
$$\|u\|_{L^\infty_{T}\dot{B}_{p,1}^{\frac{3}{p}}}+\frac{C_3}{2}\|u\|_{L^1_{T}\dot{B}_{p,1}^{\frac{3}{p}+2}}+\frac{\kappa}{2}\|u\|_{L^1_{T}\dot{B}_{p,1}^{\frac{3}{p}}}
\leqslant\eta.
$$
By using Sobolev inequality, interpolation inequality and Theorem \ref{Thm1.3}, we can get the desired results of Theorem \ref{Thm1.4}.

\section*{Acknowledgments}  H. Wang was supported by the National Natural Science Foundation of China (No. 11901066), the Natural Science Foundation of Chongqing (No. cstc2019jcyj-msxmX0167), and projects Nos. 2019CDXYST0015, 2020CDJQY-A040 supported by the Fundamental Research Funds for the Central Universities.

\end{document}